 \newtheorem{thm}{Theorem}[section]
 \newtheorem{lem}{Lemma}[section]
  \newtheorem{rem}{Remark}[section]
 \newtheorem{defn}{Definition}[section]
\newcommand\dlmu[2][14.5cm]{\hskip0pt\underline{\hb@xt@ #1{\hss#2\hss}}\hskip1pt}
\title{\textsf{ Existence and invariant measure of pullback attractors for 3D Navier-Stokes-Voigt equations with delay}
%\footnote{Supported
%by NSF of China with No.11971356, 11271290 and by NSF of Zhejiang Province
%with No.LY17A010011.
%Also supported by FEDER and the Spanish Ministerio de Ciencia, Innovaci\'on y Universidades project PGC2018-096540-B-I00,  and Junta de Andaluc\'{\i}a (Spain) %under the project US-1254251.
}
\author{
\textsf{Yuming Qin}$^{a,b}$\footnote{Corresponding author E-mail:
%yuming$\_$qin@hotmail.com,
yuming@dhu.edu.cn}\,,\,
\qquad
\textsf{Huite Jiang}$^a$\footnote{E-mail: huite$ \_ $Jiang@mail.dhu.edu.cn}\,,\,
\qquad
%\quad \textsf{Tom\'as Caraballo}$^b$\footnote{E-mail: caraball@us.es}
\\\\
 \small\it \small\it $^a$ Department of Mathematics, Donghua University,
\small\it Shanghai 201620, P. R. China,\\
\small\it $^b$ Institute for Nonlinear Science, Donghua University, Shanghai 201620, P. R. China.\\
}
\date{\small}
\begin{document}

\baselineskip 18pt

\maketitle\dlmu{}

\begin{abstract}

In this paper, we study the long-time dynamics of 3D non-autonomous Navier-Stokes-Voigt(NSV) equations with delay.
Inspired by \cite{SCD07}, we use the contractive function method to prove the pullback $\mathcal{D}$-asymptotical compactness and
%then prove the
existence of the pullback attractors.
%on lower and higher regularity spaces.
Furthermore, we verify the regularity of pullback attractors by the method in \cite{CP09, WZL18, ZS15} and there exists a unique family of Borel invariant probability measures which is supported by the pullback attractors. %Our paper is further research on reference \cite{CMD20}.

%2 in two different phase spaces, and then we study the the regularity of pullback attractors.

%1 Then when $\alpha \rightarrow 0$, we verify the upper semicontinuity of 3D incompressible Navier-Stokes equations with delay when 3D NSV equations with delay are considered as the perturbative equations of 3D incompressible Navier-Stokes equations.

\vskip 3mm

\noindent\textbf{Keywords}:
Navier-Stokes-Voigt equations;
regularity;
%upper semicontinuity;
pullback attractors;
invariant measure

\noindent\textbf{AMS Subject Classifications}: 35Q30; 35Q35; 37N10; 35B40; 35B41

%\smallskip
%\smallskip

%\noindent\textbf{MSC2010}: 35B41, 34D35, 76F20

\end{abstract}

%%%%%%%%%%%%%%%%%%%%    Section 1  %%%%%%%%%%%%%%%%%%%%%%%%%%%%%%%%%%%%%%%%%
\baselineskip 20pt

\setcounter {equation}{0}
\section{Introduction}

\quad \,  In this paper, we investigate the long-time dynamics for the following 3D incompressible non-autonomous Navier-Stokes-Voigt(NSV) equations with delay %that governs the motion of a Klein-Voight linear viscoelastic incompressible fluid by Oskolkov in \cite{O73}{\rm :}
\begin{align}\label{1.1}
\left\{\begin{array}{l}
\partial_t u-\nu \triangle u-\alpha^2 \triangle u_t+(u \cdot \nabla) u+\nabla p
=f(t)+g\left(t, u_t\right), \text { in }(\tau,+\infty) \times \Omega, \\
\nabla \cdot u=0, \quad \text { in }(\tau,+\infty) \times \Omega, \\
u=0, \quad \text { on }(\tau,+\infty) \times \partial \Omega, \\
u(\tau, x)=u_\tau(x), \quad \text { in } \Omega, \\
u(\tau+t, x)=\varphi(t, x), \quad \text { in }(-h, 0) \times \Omega,
\end{array}\right.
\end{align}
where
%$\Omega \subset \mathbb{R}^3$ is a bounded domain with smooth boundary $\partial \Omega$,
%$\tau \in \mathbb{R}$ is the initial time,
%the parameter $\tau \in \mathbb{R}$ signifies the initial time, $u=u(t, x)=(u_1(t, x), u_2(t, x), u_3(t, x))$ is the velocity vector field, $u_\tau$ is an initial velocity field at the initial time $\tau \in \mathbb{R}$,
%$p$ is the pressure, $\nu>0$ is the kinematic viscosity, the length scale $\alpha$ is a characterizing parameter of the elasticity of the fluid,
$\Omega \subset \mathbb{R}^3$ represents a bounded domain with a smooth boundary denoted by $\partial \Omega$,
the velocity vector field is expressed as $u=u(t, x)=(u_1(t, x), u_2(t, x), u_3(t, x))$, with $u_\tau$ representing the initial velocity field at the initial time $\tau \in \mathbb{R}$,
the variable $p$ denotes pressure, $\nu>0$ stands for kinematic viscosity, $\alpha$ serves as the length scale characterizing the fluid's elasticity,
$\varphi$ is a given function defined in the interval $(-h, 0)$, and $f$ is an external force term which may depend on time.
Here, $u_t$ denotes a segment of the solution, in other words, given $h>0$ and a function $u:[\tau-h,+\infty) \times \Omega \rightarrow \mathbb{R}$, for each $t \geq \tau$, we define the mapping $u_t:[-h, 0] \times \Omega \rightarrow \mathbb{R}$ by
$$
u_t(\theta, x)=u(t+\theta, x), \text { for } \theta \in[-h, 0], \enskip x \in \Omega.
$$

%As we known, there is a close relationship between Navier-Stokes equations are closely related to Navier-Stokes-Voight equations and plenty references study the long-time dynamics of them.
Navier-Stokes-Voigt systems have been proposed as regularizations of the 3D Navier-Stokes equation for the purpose of direct numerical simulation  in both autonomous and non-autonomous cases (cf. \cite{BB12, BKR16, BS17, CLT06, CR01, EHL13, KLR12, LR13, N16}).
%for more details about NSV systems).
The delay term $g\left(t, u_t\right)$ represents, for instance, the influence of an external force with some kind of delay, memory or hereditary characteristics, although can also model some kind of feedback controls.
There are several kinds of delay terms in the equations researched recently, where delay terms have three typical kinds.
%The first one is the constant delay or the discrete delay in the form of $g(u(x, t-h))$, where $h$ is a constant and $g$ is a given functional. The second one is the variable delay $g(u(x, t-\tau(t)))$, where $\tau$ is a function with respect to $t$. The third one is the distributed delay in the form of $\int_{-h}^0 \mu(t-s) g(u(x, s)) d s$, with the kernel function $\mu$.
The first type involves a constant delay or discrete delay expressed as $g(u(x, t-h))$, where $h$ remains a positive constant, and $g$ represents a specified functional. The second type comprises a variable delay $g(u(x, t-\tau(t)))$, where $\tau$ denotes a function dependent on $t$. The third type features a distributed delay formulated as $\int_{-h}^0 \mu(t-s) g(u(x, s)) d s$, incorporating the kernel function $\mu$.
Besides, the memory also have different types, such as general hereditary memory in the form of $\int_0^{\infty} \kappa(s) \Delta u(t-s) \mathrm{d} s$, memory effected by viscoelasticity in the form of $\int_0^{\infty} \mu(t-s) \Delta^2 u(t-s) \mathrm{d} s$ and so on.

The Navier-Stokes-Voigt equations and their related equations with finite delays or memory have been studied recently in some particular cases for the delay \cite{GM20, T21, SY23, N45, AT18}.
In \cite{T21}, the authors proved existence of polynomial stationary solutions by structuring appropriate Lyapunov functional in a special case of unbounded variable delay with a sufficient condition of parameters.
In \cite{N45}, the authors verified the existence of uniform attractors with damping and memory in the form of $\int_0^{\infty} \kappa(s) \Delta u(t-s) \mathrm{d} s$.
Meanwhile, the above mentioned results extended and improved some existed results in %Yue and Wang
\cite{YW20}.
Besides, Anh and  Thanh \cite{AT18} studied the existence of a compact global attractor for Navier-Stokes-Voigt equations with infinite delay $F(u_t)$, where $F(\xi)=\int_{-\infty}^0 G(s, \xi(s)) \mathrm{d} s$, and then the authors in \cite{AT18} obtained the existence and exponential stability of stationary solutions.
%In the second, the authors prove the well-posedness of the problem and the existence of pullback attractor containing variable delay in the form $g(u(t-\rho(t)))$, where $\rho \in C^1([0,+\infty)), \rho^{\prime}(t) \leq \rho_0<1$.
%and in [38], the model is and contains variable and distributed delay, $g\left(t, u_t\right)=g_0\left(t-\rho(t), u(t-\rho(t))+\int_{-h}^0 G(s, u(t+s)) d s\right.$, with $\rho \in C^1([0,+\infty)), \rho^{\prime}(t) \leq M<1$, and it is proved the existence of pullback attractor.
More recently, reference \cite{CMD20} studied the existence and uniqueness of stationary solutions by three methods for the equations with different types of delay terms while there are absence of existence of pullback attractors and geometrical structure analysis.
Nevertheless, to the best of our knowledge, there is a few references concerning the existence and invariant measure of the pullback attractors for 3D NSV equations with constant delays. %Inspired by \cite{CMD20}, we are interested in these issues and try to investigate them furthermore.

%Here, our system includes the later two delay terms. By appending the assumptions (H2) on the function $\tau$, we deal with the variable delay term $\mu_2 u^{\prime}(x, t-\tau(t)$ ).
%For the distributed delay term, we consider the weighted Hilbert spaces as the state space, where the weighted function is related to the kernel function $\mu$.

The motivation of this article is to investigate the existence of invariant Borel probability measures for the 3D NSV equations with constant delays. The invariant measures and statistical solutions are very useful for us to understand the turbulence (see, e.g., \cite{BMR16, CG12, FMRT01, R09}). The key reason is that the measurements of several important aspects (such as mass and
velocity) of turbulent flows are actually measurements of time-average quantities. Statistical solutions have been introduced as a rigorous mathematical notion to formalize the object of ensemble average in the conventional statistical theory of turbulence.
Nowadays, there are two pervasive and complementary notions of statistical solutions for the Navier-Stokes equations. The one is the so-called Foias-Prodi statistical solutions introduced by Foias and Prodi in \cite{FP76}, which is a family of Borel probability measures
parameterized by the time variable and is defined on the phase space of the Navier-Stokes equations, representing the probability distribution of the velocity field of the flow at each time.
The other is the so-called Vishik-Fursikov statistical solutions given by Vishik and Fursikov in \cite{VF78}, which is a single Borel measure on the space of trajectories, representing the probability distribution of the space-time velocity field.
Recently,
%Zhu and Zhao \cite{ZZ18} verified the existence of pullback attractors for three-dimensional regularized Magnetohydrodynamics (MHD) equations and the family of invariant Borel probability measures.
the scholars validated the presence of pullback attractors concerning three-dimensional regularized Magnetohydrodynamics (MHD) equations and 3D non-autonomous globally modified Navier-Stokes
equations (see, e.g., \cite{ZY17, ZZ18}), along with a set of invariant Borel probability measures.
Later, reference \cite{WZT20} investigated the three-dimensional globally modified Navier-Stokes equations with unbounded variable delays.
%This paper not only proves the existence of the global well-posedness of the solutions, and the existence of the pullback attractor, but also constructs a family of invariant Borel probability measures.
This paper establishes not only the global well-posedness of solutions and the presence of a pullback attractors but also formulates a set of invariant Borel probability measures.
%Inspired by the above references, we decide to study the long-time dynamics of 3D NSV equations with constant delays.
Motivated by the aforementioned references, we opt to explore the long-term behavior of 3D NSV equations featuring constant delays.

The first goal of this paper is to establish the existence of pullback attrators for three-dimensional Navier-Stokes-Voigt equations \eqref{1.1}. To this end, we need to obtain the existence of pullback absorbing sets and verify the pullback asymptotical compactness.
There is a lot of literature on how to prove the compactness of pullback attractors.
%However, we encounter some difficulties in proving pullback compactness due to the effect of the time delay term.
%To obtain the existence of pullback absorbing sets, we have to prove some lemmas.
%In addition,
Ma, Wang and Zhong \cite{MWZ02} proposed the Condition(C) in autonomous case, then Wang and Zhong \cite{WZ08} generalized their results to the non-autonomous case. Later, Harraga and Yebdri applied the above mentioned methods to semilinear nonclassical diffusion equations with delay(see \cite[Proposition 3.9]{HY16}).
%But when we use the above methods to prove pullback asymptotically compactness, we discover that it is difficult to obtain the existence of pullback attractors on the higher regularity space due to the limitation of the regularity of the weak solution.
%Thus we have to try alternative methods.
Another method to prove pullback asymptotical compactness is that norm convergence plus weak convergence leads to strong convergence.
References \cite{2GMR12, ZY17} used this method to verify the pullback $\mathcal{D}_\sigma$-asymptotical compactness of pullback attractors for three-dimensional non-autonomous NSV equations or globally modified Navier-Stokes
equations. The reason why the above methods are not applicable to our model is that affected by the delay term, we cannot prove $\mathop{\lim}\limits_{n \rightarrow \infty}\sup \|u(t; \tau_n, u_{\tau_n}, \varphi_n)\| \leq \|w_0\|$ to hold.
After we tried to use various methods to establish the pullback asymptotical compactness,
%such as Condition(C), and method that norm convergence adding weak convergence leads to strong convergence,
we finally determine to adopt contractive function method (see \cite{SCD07}) to verify the existence of pullback attractors.
One of the advantages of the method is that it does not need additional estimates resulting in increase of computational workload, but make full use of the existed estimates to testify the compactness.
%The concept of the energy equation (or enstrophy equation) was initially proposed by Ball (\cite{B04}) and subsequently developed by Moise et al. [18, 19] within a systematic and abstract framework. Indeed, this concept has undergone extensive expansion and widespread application in verifying the asymptotic compactness of the semigroup or process associated with partial differential equations on an unbounded domain (refer to [4, 9, 14, 22, 24, 26¨C28]). Recently, the enstrophy equation approach, in a condensed format, was employed in [8] to explore the existence and tempered behavior of the pullback attractor for the two-dimensional (2D) Navier¨CStokes equations. This condensed representation found application in [29, 30] to examine the existence of the pullback attractor for 2D non-Newtonian fluid equations and 2D non-autonomous micropolar fluid flows featuring infinite delays.

The second goal of this paper is to establish the existence of a family of Borel
invariant probability measures for three-dimensional non-autonomous NSV equations \eqref{1.1}.
In recent years, a set of studies has introduced various methods to generate invariant measures for non-autonomous systems while imposing several assumptions on the dynamic process in question (refer to \cite{FMRT01, W09, L08, LR14}).
Presently, these results find some applications to formulating invariant measures for some evolution equations, as evident in works such as \cite{LS17, WZT20, ZY17, ZZ18} and the associated literature.
Distinctive variances exist between the autonomous dynamical system and its non-autonomous counterpart, particularly having evidents in the continuous dependency of dynamical systems on their parameters.
%By the results of {\L}ukaszewicz and Robinson ( [17, Thereom 3.1]), we shall check that the $E_V^2$-valued function $\tau \longmapsto U(t, \tau)(u_\tau, \varphi)$ is continuous and bounded on $(-\infty, t]$.
In pursuit of this objective, based on the outcomes presented by {\L}ukaszewicz and Robinson (\cite[Theorem 3.1]{LR14}), we will verify the continuity and boundedness nature of the function $\tau \longmapsto U(t, \tau)(u_\tau, \varphi)$, which takes values in $E_V^2$ over the interval $(-\infty, t]$ by exploiting the structure of the three-dimensional NSV equations(refer to the notation in Section 5).
The remaining sections of this paper are structured as outlined below.
%In Sect. 2, we introduce some mathematical signs, give assumption conditions to state the global result of weak solution.
In Sect. 2, we present various mathematical symbols, outline assumption conditions, and articulate the global results concerning weak solutions.
%In Sect. 3, we prove the existence of bounded absorbing sets, pullback $\mathcal{D}$-asymptotically compactness and verify the existence of pullback attractors on $E_V^2$.
Sect. 3 is dedicated to demonstrating the existence of bounded absorbing sets, pullback $\mathcal{D}$-asymptotic compactness, and confirming the presence of pullback attractors within $E_V^2$.
%In Sect. 4, we also establish the existence of pullback attractors in higher regularity phase space on the premise of the existence of bounded absorbing sets.
%In Sect. 4, we analysis the regularity of pullback attractors.
%In Sect. 5, we prove the existence of invariant measure on pullback attractors.
In Sect. 4, an analysis of the regularity of pullback attractors is conducted. Finally, in Sect. 5, we establish the existence of an invariant measure associated with pullback attractors.

\section{Global Well-posedness of solutions}
\quad \,   In this paper, we use the following notation{\rm :}\\
$\mathbb{R}=$ the set of real numbers, $\mathbb{N}=$ the set of positive integers,
$\mathbb{R}_\tau=[\tau,+\infty), \quad \mathbb{R}_{+}=[0,+\infty)$;\\
$L^p(\Omega)=$ the 3D vector Lebesgue space with norm $\|\cdot\|_{L^p(\Omega)}$, in particular, $\|\cdot\|_{L^2(\Omega)}=\|\cdot\|$;\\
$H^m(\Omega)=$ the 3D Sobolev space $\{\phi=(\phi_1, \phi_2, \phi_3) \in L^2(\Omega), \nabla^k \phi \in L^2(\Omega), k \leq m\}$
with norm $\|\cdot\|_{H^m(\Omega)}$;\\
%(see \cite{A75})\\
$H_0^1(\Omega)=\text { closure of }\{\phi: \phi=(\phi_1, \phi_2, \phi_3) \in (\mathcal{C}_0^{\infty}(\Omega))^3 \} \text { in } H^1(\Omega)$;\\
$\mathcal{V}=\{\phi \in (\mathcal{C}_0^{\infty}(\Omega))^3 : \phi=(\phi_1, \phi_2, \phi_3), \nabla \cdot \phi=0\}$;\\
$H=$ closure of $\mathcal{V}$ in $L^2(\Omega)$ with norm $\|\cdot\|$ and inner product $(\cdot, \cdot)$, $H^{\prime}=$ dual space of $H$;\\
$V=$ closure of $\mathcal{V}$ in $H_0^1(\Omega)$ with norm $\|\cdot\|_V$,
 %and inner product $((\cdot,\cdot))$,
 $V^{\prime}=$ dual space of $V$;\\
$(\cdot, \cdot)=$ the inner product in $H$, $\langle\cdot$, $\cdot\rangle=$ the dual pairing between $V$ and $V^{\prime}$;\\
%It is well known that theses spaces satisfy $V \subset H \subset V^{\prime}$, where the injections are dense and compact.\\
$\mathcal{C}_H=\mathcal{C}([-h, 0] ; H)$, $\mathcal{C}_V=\mathcal{C}([-h, 0] ; V)$ ($h$ is a fixed positive number );\\
$L_H^2=L^2(-h, 0 ; H)$, $L_V^2=L^2(-h, 0 ; V)$,
$E_H^2=H \times L_H^2$, $E_V^2=V \times L_V^2$; \\ %\quad E_{D(A)}^2={D(A)}^2 \times L_{D(A)}^2,$\\
%Obviously, the spaces
$E_H^2$ and $E_V^2$ are Hilbert spaces with the norm respectively
$$
\|(u_0, \varphi)\|^2_{E_H^2}=\|u_0\|^2+\int_{-h}^0 \|\varphi(\theta)\|^2 d \theta, \quad (u_0, \varphi) \in E_H^2,
$$
and
$$
 \|(u_0,  \varphi)\|_{E_V^2}
=\|\nabla u_0\|^2+ \int_{-h}^0 \|\nabla \varphi(\theta)\|^2 d \theta, \quad(u_0, \varphi) \in E_V^2.
$$
%$\|\cdot\|_{E_V^2}=$ the norm of $E_V^2$ induced by $(\cdot, \cdot)_{E_V^2}$,\\
%Dist $_E(X, Y)=$ the Hausdorff semidistance between $X \subset E$ and $Y \subset E$ defined %by
%$$
%\operatorname{Dist}_E(X, Y)=\sup _{x \in X} \inf _{y \in Y}\|x-y\|_E,
%$$
%``$\longrightarrow$ " and `` $\rightharpoonup$ " denote convergence in the strong topology and the weak topology, respectively,\\
%``$\hookrightarrow$ " denotes embedding between spaces.

%Denote by $(\cdot, \cdot)$ and $|\cdot|$, respectively, the scalar product and associate norm in $\left(L^2(\Omega)\right)^3$, and by $(\nabla u, \nabla v)$ the scalar product in $\left(L^2(\Omega)\right)^{3 \times 3}$ for the %gradients of $u$ and $v$.
%Let $H$ be the closure in $\left(L^2(\Omega)\right)^3$ of the following set
%$$
%\mathcal{V}=\left\{v \in\left(C_0^{\infty}(\Omega)\right)^3: \nabla \cdot v=0 \text { in } \Omega\right\},
%$$
%and let $V$ be the closure of $\mathcal{V}$ in $\left(H_0^1(\Omega)\right)^3$. Then, $H$ is a Hilbert space for the inner product of $\left(L^2(\Omega)\right)^3$, and $V$ is a Hilbert subspace of $\left(H_0^1(\Omega)\right)^3$ with norm $\|\cdot\|$ and inner product $((\cdot, \cdot))$.

%We will use $\langle\cdot, \cdot\rangle$ to denote the duality product between $V$ and $V^{\prime}$, and $\|\cdot\|_*$ for the norm in $V^{\prime}$.
For short, we next introduce some operators and properties (see, e.g., \cite{T79}) to put our problem into an abstract framework.
Firstly, we denote $A: V \rightarrow V^{\prime}$ as
%the Stokes operator defined by
%$$
%A w=-\mathrm{P}(\Delta w), \quad \forall w \in D(A),
%$$
%where $\mathrm{P}$ is the Leray operator
%, i.e., is the projector operator
%from $((L^2(\Omega))^3$ onto $H$. Operator $A$ is a linear continuous operator from $V$ into $V^{\prime}$, satisfying
$$
\langle A u, v\rangle=(\nabla u, \nabla v), \quad \forall u, v \in V .
$$
%Taking into account that $\partial \Omega$ is regular enough, then
Denote $D(A)=H^2(\Omega)\cap V$, then $A w=-P(\Delta w)$, $\forall w \in D(A)$, is the the Stokes operator, where $P$ is the Leray-Helmholtz operator from $L^2(\Omega)$ onto $H$.
%and $\|A w\|$ defines a norm in $D(A)$ which is equivalent to the one in $H^2(\Omega)$, in other words, there exists a constant $C_0(\Omega)>0$ depending only on $\Omega$ such that
%$$
%\|w\|_{\left(H^2(\Omega)\right)^3} \leq C_0(\Omega)||A w||, \quad \forall w \in D(A) .
%$$
Naturally, we further define space $E_{D(A)}^2=D(A) \times L^2_{D(A)}$ with the norm
$$
\|(u_0,\varphi)\|^2_{E_{D(A)}^2}
=\|\triangle u_0\|^2+\int_{-h}^0 \|\triangle \varphi(\theta)\|^2 d \theta, \quad(u_0, \varphi) \in E_{D(A)}^2.
$$
and $\|\cdot\|_{E_{D(A)}^2}$ is the norm of space $E_V^2$.
%induced by $(\cdot, \cdot)_{E_{D(A)}^2}$.\\

Secondly, we consider the trilinear form defined as
$$
b(u, v, w)=\sum_{i, j=1}^3 \int_{\Omega} u_i \frac{\partial v_j}{\partial x_i} w_j d x, \quad \forall u, v, w \in H_0^1.
$$
%for every function $u, v, w: \Omega \rightarrow \mathbb{R}^3$ for which the right-hand side is well defined.
In particular, $b(u, v, w)$ can be extended continuously to make sense for all $u, v, w \in V$, and is a continuous trilinear form on $V \times V \times V$, and satisfies
\begin{align}
b(u, v, w)=-b(u, w, v), \, b(u, v, v)=0, \quad \forall u, v, w \in V. \label{2.3}
\end{align}
On the other hand, for any $u \in V$, we will use $B(u)$ to denote the element of $V^{\prime}$ given by
$$
\langle B(u), w\rangle=b(u, u, w), \quad \forall w \in V .
$$
%\begin{align*}
%b(u, v, v)=0, \quad \forall u, v \in V, \nonumber
%\end{align*}
%and,
For above introduced operators, we select the following estimations (cf \cite{T79}).
There exist some positive constants $C_i(i=1,2,3,4,5)$ depending only on $\Omega$ such that
%Using Agmon's inequality (e.g., cf [13]), we can assure that there exists a constant $C_2, C_3>0$ such that
\begin{align}
&
||b(u, v, w)|| \leq C_1\|u\|_{V}\|v\|_{V}\|w\|_{V}, \quad \forall u, v, w \in V, \label{2.2}\\
&
||b(u, v, w)|| \leq C_2||A u||^{1 / 2}\|u\|_{V}^{1 / 2}\|v\|_{V}||w||, \enskip \forall \, (u, v, w) \in D(A) \times V \times H, \label{2.4}\\
&
||b(u, v, w)|| \leq C_3||u||^{1 / 2}||u||_{V}^{1 / 2}||v||_{V}^{1 / 2}||Av||^{1 / 2}||w||, \enskip \forall \, (u, v, w) \in V \times D(A) \times H,\label{2.5}\\
&
\|B(u)\|_{V^{\prime}} \leq C_4\|u\|_{V}^2, \quad \forall u \in V,\label{2.6}\\
&
||B(u)|| \leq C_5||A u||^{1 / 2}\|u\|_{V}^{3 / 2}, \quad \forall u \in D(A) . \label{2.7}
\end{align}
%It follows from \eqref{2.2} that
%\begin{align}\label{2.6}
%\|B(u)\|_{V^{\prime}} \leq C_4\|u\|_{V}^2, \quad \forall u \in V,
%\end{align}
%and, in particular, by \eqref{2.3} and the identification of $H^{\prime}$ with $H$, if $u \in D(A)$, then $B(u) \in H$, with
%\begin{align}
%||B(u)|| \leq C_5||A u||^{1 / 2}\|u\|_{V}^{3 / 2}, \quad \forall u \in D(A) .
%\end{align}
%By the equivalence property of the norm of space $V$, there exists a constant $C_6$(only depends on $\Omega$) such that
%\begin{align}
%C_6||u||^2_{V} \leq <Au, u> \leq ||u||^2_{V}, \forall u \in V.
%\end{align}
%We now describe the assumptions on $f, g$ and the initial values $u^\tau$ and $\phi$, for our model \eqref{1.1}, and we recall the concept of variational solution.

To ensure the well-posedness of problem \eqref{1.1},
we need to give some suitable assumptions on the external force $f$ and the delay term $g$.
%Let $Y$ be a Banach space, and denote $C_Y=C([-h, 0] ; Y)$ and $L_Y^2=L^2(-h, 0 ; Y)$.
Assume $g: \mathbb{R} \times C_V \rightarrow L^2(\Omega)$, satisfying:
\begin{enumerate}
%\item[{\rm \textbf{(\textsf{H1})}}]
%  Assume that $u_\tau \in V$, the external force $f \in L_{\text {loc }}^2(\mathbb{R}, H)$.

%\item[{\rm \textbf{(\textsf{H2})}}] Assume there exist $\beta>0$ and $0 \leq \alpha<\frac{\delta}{2}$ such that
%\begin{align}\label{97}
%\|f(t)\|^2 \leq \beta e^{\alpha|t|},
%\end{align}
%which implies that
%\begin{align}
%& \int_{-\infty}^\tau e^{\delta t}\|f(t)\|^2 d t<\infty, \quad \forall \tau \in \mathbb{R}, \label{96}\\
%& \int_{-\infty}^\tau\left(\int_{-\infty}^t e^{\delta s / 2}\|f(s)\|^2 d s\right)^2 d t<\infty, \quad \forall \tau \in \mathbb{R}. \label{95}
%\end{align}

%Assume $g: \mathbb{R} \times C_V \rightarrow\left(H^{-1}(\Omega)\right)^3$, satisfying:

\item[{\rm \textbf{(\textsf{H1})}}] For all fixed $\xi \in C_V$, $g(\cdot, \xi)$ is measurable.

\item[{\rm \textbf{(\textsf{H2})}}] $g(t, 0)=0, \, \forall \, t \in \mathbb{R}$.

\item[{\rm \textbf{(\textsf{H3})}}] There exists $L_g>0$ such that for all $t \geq \tau$ and $ \forall \, \xi$, $\mu \in C_V$,
$$
\|g(t, \xi)-g(t, \mu)\| \leq L_g\|\xi-\mu\|_{C_V}.
$$

\item[{\rm \textbf{(\textsf{H4})}}] There exist positive constants $C_g \in \big(0, (2\nu-\sigma\lambda_1^{-1}-\alpha^2\sigma)(\lambda_1^{-1}+1)^{-1}C_6^{-\frac12}\big)$ and $\sigma \in \big(0, (\lambda_1^{-1}+\alpha^2)^{-1}(2\nu-4C_gC_6^{\frac12}(\lambda_1^{-1}+1))\big)$ such that, for all $\tau \leq t<T$ and $u, v \in C([\tau-h, T]$; $V)$,
$$
\int_\tau^t e^{\sigma s}\|g(s, u_s)-g(s, v_s)\|^2 d s \leq C_g^2 \int_{\tau-h}^t e^{\sigma s}\|u(s)-v(s)\|_{V}^2 d s .
$$

%\item[{\rm \textbf{(\textsf{H5})}}]
% For all $T>0$ and for any sequence $\{v^n(\cdot)\}_{n \geq 1} \subset L^2(-h, T ; V)$ such that $v^n \rightharpoonup v$ weakly in $L^2(-h, T ; V)$ and $v^n \rightarrow v$ strongly in $L^2(-h, T ; H)$, it follows that $g(s, v_s^n) \rightarrow g(s, v_s)$ weakly in $L^2(0, T ; L^{2}(\Omega))$.
\end{enumerate}

By the above notation and analysis, we consider problem \eqref{1.1} excluding the pressure $p$, in the solenoidal vector field as
\begin{align}\label{91}
\left\{\begin{array}{l}
\text { For each } \tau \in \mathbb{R} \enskip \text{and} \enskip \forall \, T>\tau, \,\text{find} \\
u \in L^2(\tau-h, T ; V) \cap L^{\infty}(\tau, T ; V) \\ \text { such that} \enskip
\frac{d}{d t}(u(t)+\alpha^2 A u(t))+v A u(t)+B(u(t))=f(t)+g(t, u_t), \\
\text { in } \enskip \mathcal{D}^{\prime}(\tau, \infty ; V^{\prime}), \, u(\tau)=u_\tau, \, u(\tau+t)=\varphi(t), \enskip t \in(-h, 0).
\end{array}\right.
\end{align}

The existence and uniqueness of weak solutions $u$ to problem \eqref{1.1} can be obtained by the usual Faedo-Galerkin approximation and a compactness method \cite{CMD20}. We state the main results and omit the details here.

\begin{thm}\label{thm2.1}
Assume for each $\tau \in \mathbb{R}$, $f \in L_{l o c}^2(\mathbb{R} ; V^{\prime})$, $(u_\tau, \varphi) \in E_V^2$ and $g: \mathbb{R} \times C_V \rightarrow L^2(\Omega)$ satisfying \text{(H1)-(H4)}. It is said that $u$ is a weak solution to \eqref{1.1} if and only if $u \in C([\tau,+\infty) ; V)$, $\frac{d u}{d t} \in L^2(\tau, T ; V)$ for all $T>\tau$, and
\begin{align*}
&
u(t)+\alpha^2 A u(t)  +\int_\tau^t(v A u(s)+B(u(s))) d s \\
&
\quad =u^\tau+\alpha^2 A u^\tau+\int_\tau^t f(s) d s+\int_\tau^t g(s, u_s) d s \text { (equality in } V^{\prime}),
\end{align*}
for all $t \geq \tau$.
\end{thm}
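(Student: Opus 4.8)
The plan is to prove the asserted equivalence by recognizing that the distributional formulation in \eqref{91} and the integrated identity in the theorem differ only by one integration in time, the genuine content being the regularity class $u\in C([\tau,+\infty);V)$ with $\frac{du}{dt}\in L^2(\tau,T;V)$. The central object is the Voigt operator $I+\alpha^2A$. Writing $w(t)=u(t)+\alpha^2Au(t)=(I+\alpha^2A)u(t)$ and
$$
F(t)=f(t)+g(t,u_t)-\nu Au(t)-B(u(t)),
$$
the equation in \eqref{91} reads $\frac{dw}{dt}=F$ in $\mathcal{D}'(\tau,\infty;V')$, while the integrated identity is precisely $w(t)-w(\tau)=\int_\tau^t F(s)\,ds$. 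First I would record that, through the bilinear form $\langle(I+\alpha^2A)u,v\rangle=(u,v)+\alpha^2(\nabla u,\nabla v)$, the operator $I+\alpha^2A$ is coercive on $V$, hence by Lax--Milgram an isomorphism $V\to V'$ whose inverse maps $V'$ boundedly into $V$; this regularizing effect of the Voigt term is what distinguishes the present setting from the classical Navier--Stokes case.

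For the forward implication I would start from a weak solution in the sense of \eqref{91} and show $F\in L^2(\tau,T;V')$. By \eqref{2.6} one has $\|B(u)\|_{V'}\leq C_4\|u\|_V^2$, so $B(u)\in L^\infty(\tau,T;V')$ since $u\in L^\infty(\tau,T;V)$; likewise $\nu Au\in L^\infty(\tau,T;V')$; $f\in L^2_{loc}(\mathbb{R};V')$ by hypothesis; and by \textbf{(H2)}--\textbf{(H3)} together with boundedness of the solution segments one gets $\|g(t,u_t)\|\leq L_g\|u_t\|_{C_V}$, whence $g(\cdot,u_\cdot)\in L^2(\tau,T;H)\hookrightarrow L^2(\tau,T;V')$. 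Consequently $\frac{dw}{dt}=F\in L^2(\tau,T;V')$. Since $I+\alpha^2A$ is time-independent, applying the bounded inverse yields the crucial upgrade $\frac{du}{dt}=(I+\alpha^2A)^{-1}F\in L^2(\tau,T;V)$. Combined with $u\in L^2(\tau,T;V)$ this gives $u\in H^1(\tau,T;V)\hookrightarrow C([\tau,T];V)$ for every $T>\tau$, i.e.\ $u\in C([\tau,+\infty);V)$. Finally, integrating $\frac{dw}{dt}=F$ over $[\tau,t]$ and substituting back $w=u+\alpha^2Au$ and the expression for $F$ produces exactly the stated identity (with $u(\tau)=u_\tau=u^\tau$).

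For the converse I would assume $u$ has the stated regularity and satisfies the integrated identity. The right-hand side lies in $L^2(\tau,T;V')$ by the same estimates, so $t\mapsto w(\tau)+\int_\tau^t F(s)\,ds$ is absolutely continuous as a $V'$-valued map and differentiable a.e.\ with derivative $F$; hence $\frac{dw}{dt}=F$ holds in $\mathcal{D}'(\tau,\infty;V')$, which is the equation in \eqref{91}, while the continuity $u\in C([\tau,+\infty);V)$ guarantees that the initial value $u(\tau)=u_\tau$ and the delay constraint $u(\tau+\theta)=\varphi(\theta)$ are attained in $V$.

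I expect the main obstacle to be the regularity upgrade of the time derivative, namely establishing that $\frac{du}{dt}$ lives in $L^2(\tau,T;V)$ rather than merely in $L^2(\tau,T;V')$. This rests entirely on the invertibility of the Voigt operator $I+\alpha^2A$ onto $V'$ with inverse valued in $V$, and on first securing $F\in L^2(\tau,T;V')$, for which the delicate inputs are the nonlinear bound \eqref{2.6} and the measurability/Lipschitz hypotheses \textbf{(H1)}--\textbf{(H3)} controlling $g(t,u_t)$ along the solution; once these are in place, the remaining steps (integration, the continuous embedding $H^1\hookrightarrow C$, and reversing the integration) are routine.
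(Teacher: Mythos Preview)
The paper does not actually supply a proof of Theorem~\ref{thm2.1}. Immediately before the statement it announces that existence and uniqueness of weak solutions follow from the usual Faedo--Galerkin approximation and a compactness argument, cites \cite{CMD20}, and explicitly writes ``We state the main results and omit the details here.'' The theorem is therefore presented as a well-posedness result whose proof is deferred to the literature; the phrase ``It is said that $u$ is a weak solution \dots\ if and only if \dots'' functions as a \emph{definition} of weak solution, not as an equivalence to be established.

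Your proposal instead reads the ``if and only if'' literally, as an equivalence between the distributional formulation \eqref{91} and the integrated identity, and proves it by exploiting that $I+\alpha^2A:V\to V'$ is a Lax--Milgram isomorphism so that $\frac{dw}{dt}=F\in L^2(\tau,T;V')$ upgrades to $\frac{du}{dt}\in L^2(\tau,T;V)$. That argument is correct and is precisely the mechanism by which the Voigt term regularizes the problem (one small point: to bound $g(\cdot,u_\cdot)$ in $L^2$ before continuity of $u$ is known, hypothesis \textbf{(H4)} is the right tool rather than \textbf{(H3)}, since $u_t\in C_V$ is not yet available). However, you are proving a different statement from the one the authors intend: you show that the two formulations of ``weak solution'' coincide, whereas the paper asserts existence and uniqueness of such a solution and outsources the construction to the Faedo--Galerkin method in \cite{CMD20}. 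Your contribution is a useful complement---it justifies the regularity claimed in the definition---but it does not replace the Galerkin existence argument that the paper has in mind.
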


\begin{thm}\label{thm2.2}
Assume for each $\tau \in \mathbb{R}$, $f \in L_{l o c}^2(\mathbb{R} ;H)$, $(u_\tau, \varphi) \in E_{D(A)}^2$ and $g: \mathbb{R} \times C_V \rightarrow L^2(\Omega)$ satisfying \text{(H1)-(H4)}.  It is said that $u$ is a weak solution to \eqref{1.1} if and only if $u \in C([\tau, \infty) ; D(A))$ and $\frac{d u}{d t} \in L^2(\tau, T ; D(A))$ for all $T>\tau$, and
\begin{align*}
&
\frac{1}{2} \frac{d}{d t}\left(\|u(t)\|^2+\alpha^2|A u(t)|^2\right)+v|A u(t)|^2+(B(u(t)), A u(t)) \\
&
\quad =\left(f(t)+g\left(t, u_t\right), A u(t)\right), \text { a.e. } t > \tau .
\end{align*}
\end{thm}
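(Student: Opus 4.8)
The plan is to construct the more regular solution by a Faedo--Galerkin scheme tailored to the Voigt structure, and then to obtain the energy equality by pairing the equation with $Au$. Let $\{w_j\}_{j\ge1}$ be the eigenfunctions of the Stokes operator $A$, an orthonormal basis of $H$ which is simultaneously orthogonal in $V$ and in $D(A)$, and let $P_n$ denote the orthogonal projection of $H$ onto $\mathrm{span}\{w_1,\dots,w_n\}$. Seeking $u^n(t)=\sum_{j=1}^{n}c_{nj}(t)w_j$ that solves the projected version of \eqref{91} with data $P_nu_\tau$ and $P_n\varphi$, one is led to a delay differential system whose right-hand side is, by (H1)--(H3), Carath\'eodory in $t$ and locally Lipschitz in the state, so local solutions exist and extend to any $[\tau,T]$ once the a priori bounds below hold.

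Two nested energy estimates are needed. Testing the Galerkin equation with $u^n$ and using $b(u^n,u^n,u^n)=0$ from \eqref{2.3} gives
\[
\frac12\frac{d}{dt}\bigl(\|u^n\|^2+\alpha^2\|u^n\|_V^2\bigr)+\nu\|u^n\|_V^2=\bigl(f(t),u^n\bigr)+\bigl(g(t,u^n_t),u^n\bigr);
\]
multiplying by $e^{\sigma t}$, integrating, and invoking (H2)--(H4) (the exponentially weighted bound on $g$, with the smallness of $C_g$ and the admissible range of $\sigma$, is precisely what lets the delay term be absorbed on the left), together with Young's and Poincar\'e's inequalities and Gronwall's lemma, yields a bound for $u^n$ in $L^\infty(\tau,T;V)$ that is uniform in $n$. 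Testing next with $Au^n$, estimating the nonlinearity by \eqref{2.7} as $|(B(u^n),Au^n)|\le C_5|Au^n|^{3/2}\|u^n\|_V^{3/2}\le\tfrac{\nu}{2}|Au^n|^2+C\|u^n\|_V^6$ and absorbing, gives
\[
\frac12\frac{d}{dt}\bigl(\|u^n\|_V^2+\alpha^2|Au^n|^2\bigr)+\tfrac{\nu}{4}|Au^n|^2\le C\bigl(\|u^n\|_V^6+\|f(t)\|^2+\|g(t,u^n_t)\|^2\bigr).
\]
Since $\|u^n\|_V$ is already controlled in $L^\infty(\tau,T)$ and $f\in L^2_{loc}(\mathbb R;H)$, the right-hand side is uniformly bounded in $L^1(\tau,T)$, and Gronwall's lemma gives a uniform bound for $u^n$ in $L^\infty(\tau,T;D(A))\cap L^2(\tau,T;D(A))$. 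Rewriting the equation as $(I+\alpha^2A)\tfrac{du^n}{dt}=P_n\bigl(f+g(t,u^n_t)-\nu Au^n-B(u^n)\bigr)$, using that $(I+\alpha^2A)^{-1}$ maps $H$ boundedly into $D(A)$ and that \eqref{2.7} puts $B(u^n)$ in $L^2(\tau,T;H)$, one also obtains a uniform bound for $\tfrac{du^n}{dt}$ in $L^2(\tau,T;D(A))$.

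With these bounds I would pass to the limit along a subsequence: $u^n\rightharpoonup u$ weak-$\ast$ in $L^\infty(\tau,T;D(A))$ and weakly in $L^2(\tau,T;D(A))$, with $\tfrac{du^n}{dt}\rightharpoonup\tfrac{du}{dt}$ weakly in $L^2(\tau,T;D(A))$; as $D(A)$ embeds compactly in $V$, the Aubin--Lions--Simon lemma gives $u^n\to u$ strongly in $C([\tau,T];V)$, hence $u^n\to u$ in $L^2(\tau-h,T;V)$ after accounting for $P_n\varphi\to\varphi$. This strong convergence passes to the limit in the nonlinearity via \eqref{2.2} and in the delay term via (H3)--(H4), so $u$ solves the equation; since every term then lies in $L^2(\tau,T;H)$, the identity holds in $H$ for a.e. $t$. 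Pairing this $H$-identity with $Au(t)\in L^2(\tau,T;H)$, and noting that $u\in L^2(\tau,T;D(A))$ with $u_t\in L^2(\tau,T;D(A))$ makes $t\mapsto\|u(t)\|_V^2+\alpha^2|Au(t)|^2$ absolutely continuous with derivative $2(\nabla u,\nabla u_t)+2\alpha^2(Au,Au_t)$ (a Lions--Magenes-type lemma), one recovers the energy equality of the statement; the same regularity yields $u\in C([\tau,\infty);D(A))$. For uniqueness, the difference $w=u-v$ of two solutions with identical data satisfies $(I+\alpha^2A)w_t+\nu Aw+B(u)-B(v)=g(t,u_t)-g(t,v_t)$; testing with $w$, writing $(B(u)-B(v),w)=b(w,v,w)$ and bounding it by \eqref{2.2} with the $L^\infty$-bound on $\|v\|_V$, then multiplying by $e^{\sigma t}$ and integrating so that (H4) absorbs the delay term, Gronwall's lemma forces $w\equiv0$; this identifies $u$ with the weak solution of Theorem \ref{thm2.1}, and the stated ``if and only if'' amounts to the observation that, in this regularity class, the $V'$-valued integral equation, the pointwise $H$-identity and the energy equality are equivalent.

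The decisive difficulty should be the $D(A)$-level estimate, not the (otherwise routine) compactness step: the term $\|u^n\|_V^6$ coming out of \eqref{2.7} and Young's inequality is integrable in time only because the Voigt regularization $\alpha^2Au_t$ secures the $L^\infty(\tau,T;V)$ bound in the first step, while the delay contribution $\|g(t,u^n_t)\|^2$ can only be kept under control through the quantitative hypothesis (H4). Carrying the smallness constraints on $C_g$ and $\sigma$ consistently through the $V$-estimate, the $D(A)$-estimate and the uniqueness argument is the part that requires care.
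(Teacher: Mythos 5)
Your Faedo--Galerkin construction with the nested $V$- and $D(A)$-level energy estimates, the $(I+\alpha^2A)^{-1}$ trick for the time derivative, and the Gronwall/(H4) uniqueness argument is correct and is precisely the ``usual Faedo--Galerkin approximation and compactness method'' that the paper invokes for this theorem, deferring all details to \cite{CMD20}. Your write-up is in fact more explicit than the paper, which omits the proof entirely.
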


\section{Existence of pullback attractors in $E_V^2$ norm}

\quad \,  In this section, we will prove the existence of pullback attractors of the system \eqref{1.1}. Before proving the main results, we first recall some necessary abstract concepts in this section, such as basic definitions and properties of function spaces and attractors.

%Consider given a metric space $(E_V^2, d_{E_V^2})$,

%and we denote $\mathbb{R}_d^2=\{(t, \tau) \in \mathbb{R}^2: \tau \leqslant t\}$.
%The Hausdorff semidistance between two nonempty sets $A_1, A_2 \subset X$ is denoted by
%$$
%\operatorname{dist}_{X}\left(A_1, A_2\right)=\sup _{x \in A_1} \inf _{y \in A_2}\|x-y\|_{X} .
%$$
Denote by $\mathcal{P}(E_V^2)$ the family of all non-empty subsets of $E_V^2$ and
let be given $\mathcal{D}$ a non-empty class of families parameterized in time $\widehat{D}=\{D(t): t \in$ $\mathbb{R}\} \subset \mathcal{P}(E_V^2)$. The class $\mathcal{D}$ will be called a universe in $\mathcal{P}(E_V^2)$.

%We will denote $\mathcal{D}_F^X$ the universe of fixed non-empty bounded subsets of $X$, i.e., the class of all families $\widehat{D}$ of the form $\widehat{D}=\{D(t)=D: t \in \mathbb{R}\}$ with $D$ a fixed non-empty bounded subset of $X$.
%\begin{defn} ({\bf Process})
% (\cite{CPT13}) A two-parameter family of mappings $\left\{U(t, \tau): X_\tau \rightarrow X_t, t \geq \tau \in \mathbb{R}\right\}$ with following properties\\
%(i) $U(\tau, \tau)=I d$ is the identity map on $X_\tau, \tau \in \mathbb{R}$;\\
%(ii) $U(t, s) U(s, \tau)=U(t, \tau), \forall t \geq s \geq \tau$, \\
%is called a process.
%\end{defn}
\begin{defn}  (\cite{R11})
The Hausdorff semi-distance between two nonempty sets $X_a, X_b \subset E_V^2$ is defined by
$$
\operatorname{dist}_{E_V^2}(X_a, X_b)=\sup _{x_1 \in X_a} \inf _{x_2 \in X_b}\|x_1-x_2\|_{E_V^2} .
$$
\end{defn}

\begin{defn} %({\bf {Process}})
(\cite{CLR06, 2CLR06, MR09})
A process $U$ on $E_V^2$ is a mapping $\{(t, \tau) \in \mathbb{R}^2: \tau \leqslant t\}=\mathbb{R}_d^2 \times E_V^2 \ni (t, \tau, x) \mapsto U(t, \tau)x \enskip \text{in} \enskip E_V^2$ satisfies that $U(\tau, \tau) u=u$ for any $(\tau, x) \in \mathbb{R} \times E_V^2$ and $U(t, s) U(s, \tau)=U(t, \tau)$ for all $t \geq s \geqslant \tau$ and all $x \in X$.
\end{defn}

%\begin{defn}
%({\bf { Continuous Process}})
%(\cite{CLR06, 2CLR06, MR09})
%The process $\{U(t, \tau)\}_{t \geq r}$ on $E_V^2$ is said to be continuous, if for any $t \geq \tau$ the mapping $U(t, \tau): E_V^2 \rightarrow E_V^2$ is continuous.
%\end{defn}

%\begin{defn} %({\bf {Closed Process}})
%(\cite{CLR06, 2CLR06, MR09})
%The process $\{U(t, \tau)\}_{t \geq r}$ on $X$ is said to be closed, if for any sequence $\left\{x_n\right\} \in X$ the equality $U(t, \tau) x=y$ can be concluded from $x_n \rightarrow x \in X$ and $U(t, \tau) x_n \rightarrow$ $y \in X$.
%\end{defn}

%\begin{rem}
%(\cite{CLR06, 2CLR06, MR09})
%It is obvious that if a process is continuous, then it is closed.
%\end{rem}

\begin{defn} %({\bf Pullback $\mathcal{D}$-asymptotically compact})
(\cite{PSZ18, ZS15})
%([21]).
Let $\widehat{D}=\{D(t)\}_{t \in \mathbb{R}}$ be a family of bounded subsets in $E_V^2$. A process $U(\cdot, \cdot)$ is said to be pullback $\mathcal{D}$-asymptotically compact, if for any $t \in \mathbb{R}$, any sequence $\tau_n \rightarrow-\infty$ and $x_n \in D(\tau_n)$, the sequence $\{U(t, \tau_n) x_n\}_{n \in \mathbb{N}}$ is precompact in $E_V^2$.
\end{defn}

\begin{defn}(\cite{GMR12, SCD07}) \label{defn2.6}
Suppose the set $B \subset E_V^2$, then a function $\psi(\cdot, \cdot)$ defined on $E_V^2 \times E_V^2$ is said to be a contractive function on $B \times B$, if for any sequence $\{x_n\}_{n=1}^{\infty} \subset B$, there is a subsequence $\{x_{n_k}\}_{k=1}^{\infty} \subset \{x_n\}_{n=1}^{\infty}$ such that
$$
\lim _{k \rightarrow \infty} \lim _{l \rightarrow \infty} \psi(x_{n_k}, x_{n_l})=0 .
$$
For simplicity, we denote the set of all contractive functions on $B \times B$ by $\mathcal{C}(\widehat{B})$.
\end{defn}

\begin{lem}(\cite{SCD07}) \label{lem2.1}
Assume the process $\{U(t, \tau)\}_{t \geq \tau}$ has a pullback $\mathcal{D}$-absorbing set $\widehat{B}=\{B(t): t \in$ $\mathbb{R}\}$ on $E_V^2$ and there exist $T=T(t, \widehat{B}, \tilde{c})=t-\tau$ and $\psi_{t, T}(\cdot, \cdot) \in \mathcal{C}(\widehat{B})$ such that
$$
\|U(t, t-T) x-U(t, t-T) y\|_X \leq \tilde{c}+\psi_{t, T}(x, y),
$$
for any $x, y \in B(\tau)$ and $\tilde{c}>0$, then $\{U(t, \tau)\}_{t \geq \tau}$ is pullback $\mathcal{D}$-asymptotically compact on $E_V^2$.
\end{lem}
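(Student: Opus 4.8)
The plan is to run the classical contractive‑function argument (following \cite{SCD07}): convert pullback $\mathcal D$‑asymptotic compactness into the extraction of a convergent subsequence, push all orbits into one fixed bounded set by means of the pullback absorbing family $\widehat B$, and then compress along a subsequence using the contractive function $\psi_{t,T}$ together with a diagonal procedure. Since $E_V^2=V\times L_V^2$ is a Hilbert space, it is complete, so it suffices to show the following: for each $t\in\mathbb R$, each $\widehat D\in\mathcal D$, each sequence $\tau_n\to-\infty$ and each $x_n\in D(\tau_n)$, the sequence $\{U(t,\tau_n)x_n\}$ admits a Cauchy — hence convergent — subsequence in $E_V^2$. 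Since any subsequence of $\{\tau_n\}$ still tends to $-\infty$, applying this to subsequences gives precompactness of the set $\{U(t,\tau_n)x_n\}$, which is exactly the required property.

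\noindent\textbf{Reduction to a fixed anchor time.} Fix $t,\{\tau_n\},\{x_n\}$ as above. For each $m\in\mathbb N$ I would invoke the hypothesis with $\tilde c=1/m$, obtaining $T_m=T(t,\widehat B,1/m)$ and $\psi_m:=\psi_{t,T_m}\in\mathcal C(\widehat B)$ such that
\[
\|U(t,t-T_m)x-U(t,t-T_m)y\|_{E_V^2}\le \tfrac1m+\psi_m(x,y)\qquad\text{for all }x,y\in B(t-T_m).
\]
Because $\widehat B$ is pullback $\mathcal D$‑absorbing and $\widehat D\in\mathcal D$, there is $N_m$ such that for all $n\ge N_m$ we have $\tau_n\le t-T_m$ and $y^{(m)}_n:=U(t-T_m,\tau_n)x_n\in B(t-T_m)$, while the process property yields $U(t,\tau_n)x_n=U(t,t-T_m)\,y^{(m)}_n$. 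Thus, for each $m$, the tail of the orbit sequence is the image under the single map $U(t,t-T_m)$ of a sequence lying in the fixed bounded set $B(t-T_m)$.

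\noindent\textbf{Diagonal extraction and Cauchy estimate.} Since $T_m$ — and with it the anchor time $t-T_m$, the set $B(t-T_m)$ and the threshold $N_m$ — varies with $m$, a single contractive function will not do, so I would nest subsequences. Applying Definition~\ref{defn2.6} with $\psi_1\in\mathcal C(\widehat B)$ to $\{y^{(1)}_n\}_{n\ge N_1}\subset B(t-T_1)$ gives an increasing index set $I_1$ with $\lim_{k\to\infty}\lim_{l\to\infty}\psi_1(y^{(1)}_k,y^{(1)}_l)=0$ along $I_1$; inductively, $\psi_m\in\mathcal C(\widehat B)$ applied to $\{y^{(m)}_n\}_{n\in I_{m-1},\,n\ge N_m}$ produces $I_m\subseteq I_{m-1}\cap[N_m,\infty)$ with the corresponding double limit zero along $I_m$. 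Choosing $d_1<d_2<\cdots$ with $d_m\in I_m$, the tail $\{d_j:j\ge m\}$ lies in $I_m$ for every $m$. Writing $a_j:=U(t,\tau_{d_j})x_{d_j}$, fix $m$; for $j,j'\ge m$ and any $l\ge m$ the triangle inequality in $E_V^2$ and the previous step give
\[
\|a_j-a_{j'}\|_{E_V^2}\le \tfrac2m+\psi_m\big(y^{(m)}_{d_j},y^{(m)}_{d_l}\big)+\psi_m\big(y^{(m)}_{d_{j'}},y^{(m)}_{d_l}\big).
\]
Letting $l\to\infty$ and using that the double limit of $\psi_m$ vanishes along $\{d_j\}\subseteq I_m$ kills the last two terms, so $\limsup_{j,j'\to\infty}\|a_j-a_{j'}\|_{E_V^2}\le 2/m$; as $m$ is arbitrary, $\{a_j\}$ is Cauchy, hence convergent, which proves pullback $\mathcal D$‑asymptotic compactness.

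\noindent\textbf{Main obstacle.} There is no hard analysis in this lemma — every quantitative estimate is handed over by the hypotheses — so the only point requiring genuine care is the bookkeeping in the last step: because the contractive condition comes with a $\tilde c$‑dependent time $T$, one is forced to diagonalize over $\tilde c=1/m$ while tracking in which bounded set $B(t-T_m)$ each shifted orbit $y^{(m)}_n$ sits, and the passage to the limit must be performed on the $E_V^2$‑norm via the triangle inequality rather than on $\psi_m$, which carries no subadditivity. One should also verify the minor index technicalities implicit in Definition~\ref{defn2.6} (existence of the inner limit, and stability of the double limit under passing to a further subsequence), but these are routine.
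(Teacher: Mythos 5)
Your argument is correct and is essentially the standard contractive-function proof: the paper gives no proof of Lemma~\ref{lem2.1} at all, merely citing \cite{SCD07}, and your reduction to a Cauchy subsequence via the absorbing family, the diagonalization over $\tilde c=1/m$, and the triangle-inequality passage to the iterated limit of $\psi_m$ reproduce the argument of that reference. The index technicalities you flag (existence of the inner limits and their stability under further subsequences, implicit in Definition~\ref{defn2.6}) are indeed routine and are handled in the same way there.
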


%\begin{defn}(\cite{HY16, LZ07})\label{defn3.7}
%A family $\widehat{A}=\{A(t): t \in \mathbb{R}\} \subset \mathcal{P}(X)$ is said to be a pullback $\mathcal{D}$-attractor for $\{U(t, \tau)\}$ if\\
%(1) $A(t)$ is compact for all $t \in \mathbb{R}$;\\
%(2) $\widehat{A}$ is invariant; i.e., $U(t, \tau) A(\tau)=A(t)$, for all $t \geqslant \tau$;\\
%(3) $\widehat{A}$ is pullback $\mathcal{D}$-attracting; i.e.,
%$$
%\lim _{\tau \rightarrow-\infty} \operatorname{dist}_{\mathrm{X}}(U(t, \tau) D(\tau), A(t))=0,
%$$
%for all $\widehat{D} \in \mathcal{D}$ and all $t \in \mathbb{R}$.
%\end{defn}

%\begin{lem}(\cite{WQ10})
%$U(\cdot, \cdot)$ is pullback $\mathcal{D}$-asymptotic compact if and only if is pullback $\omega$-D-limit compact.
%\end{lem}

%\begin{thm} (\cite{HY16, LZ07})
%Let $\{U(t, \tau)\}$ be a norm-to-weak continuous process such that $\{U(t, \tau)\}$ is pullback $\omega$-D -limit compact. If there exists a family of pullback $\mathcal{D}$-absorbing sets $\widehat{B}=\{B(t): t \in \mathbb{R}\} \in$ $\mathcal{D}$ for the preocess $\{U(t, \tau)\}$, then there exists a pullback $\mathcal{D}$-attractor $\{A(t): t \in \mathbb{R}\}$ such that
%$$
%A(t)=\omega(\widehat{B}, t)=\bigcap_{t \leqslant s} \overline{\bigcup_{\tau \leq s} U(t, \tau) B(\tau)} .
%$$
%\end{thm}

\begin{defn} \label{defn2.7}
%({\bf {Minimal pullback attractors}})
(\cite{PSZ18, ZS15})
A family $\mathcal{A}_{\mathcal{D}}=\{\mathcal{A}_{\mathcal{D}}(t): t \in \mathbb{R}\} \subset \mathcal{P}(E_V^2)$ is said to be a pullback $\mathcal{D}$-attractor for the process $\{U(t, \tau)\}_{t \geq \tau}$ on $E_V^2$ if the following properties hold:\\
(i) the set $\mathcal{A}_{\mathcal{D}}(t)$ is compact in $E_V^2$ for any $t \in \mathbb{R}$;\\
(ii) $\mathcal{A}_{\mathcal{D}}$ is pullback $\mathcal{D}$-attracting in $E_V^2$, i.e.,
$$
\lim _{\tau \rightarrow-\infty} \operatorname{dist}_{E_V^2}(U(t, \tau) D(\tau), \mathcal{A}_{\mathcal{D}}(t))=0,
$$
for any $\widehat{D} \in \mathcal{D}$ and $t \in \mathbb{R}$;\\
(iii) $\mathcal{A}_{\mathcal{D}}$ is invariant, i.e., $U(t, \tau) \mathcal{A}_{\mathcal{D}}(\tau)=\mathcal{A}_{\mathcal{D}}(t)$, for any $\tau \leq t$. %and\\
%(iv) $\mathcal{A}_{\mathcal{D}}$ is minimal, i.e., if $\widehat{C}=\{C(t): t \in \mathbb{R}\} \subset \mathcal{P}(X)$ is a family of closed sets, which is pullback $\mathcal{D}$-attracting, then $\mathcal{A}_{\mathcal{D}}(t) \subset C(t)$ for any $t \in \mathbb{R}$.
\end{defn}

\begin{lem}\label{lem3.1}
Assume $f \in L_{l o c}^2(\mathbb{R} ;V^{\prime})$, $(u_\tau, \varphi) \in E_V^2$ and $g: \mathbb{R} \times C_V \rightarrow L^2(\Omega)$ satisfies \text{(H1)-(H4)}. Then the family of mappings $U(t, \tau): E_V^2 \rightarrow E_V^2 $,
\begin{align}\label{4.9}
(u_\tau, \varphi) \mapsto U(t, \tau)(u_\tau, \varphi)=(u(t),u_t),
\end{align}
with $(t, \tau) \in \mathbb{R}^2$ and $u$ being the weak solution to \eqref{1.1}, defines a continuous process.
\end{lem}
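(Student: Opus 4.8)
The plan is to verify the three defining properties of a process directly from the existence-uniqueness theorem (Theorem \ref{thm2.1}): that $U(t,\tau)$ is well-defined as a map $E_V^2 \to E_V^2$, that it satisfies the algebraic process identities $U(\tau,\tau) = \mathrm{Id}$ and $U(t,s)U(s,\tau) = U(t,\tau)$, and finally — the substantive part — that $(t,\tau,(u_\tau,\varphi)) \mapsto U(t,\tau)(u_\tau,\varphi)$ is continuous into $E_V^2$. First I would observe that well-definedness is immediate: given $(u_\tau,\varphi) \in E_V^2$, Theorem \ref{thm2.1} gives a unique weak solution $u \in C([\tau,+\infty);V)$ with $u_t \in C([-h,0];V) \subset L^2(-h,0;V)$, so $(u(t),u_t) \in V \times L_V^2 = E_V^2$ for every $t \geq \tau$. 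The cocycle identities follow from uniqueness of solutions: the restriction of the solution starting at $(\tau,(u_\tau,\varphi))$ to times $\geq s$ coincides with the solution started at $(s,(u(s),u_s))$, because both solve \eqref{91} with the same initial data at time $s$ and the solution is unique; here one uses that the segment $u_s$ determines the "history" needed to continue the equation past $s$.

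The main work is continuity. I would split it into continuity in the initial data (for fixed $t,\tau$) and continuity in the time variables $t$ and $\tau$. For continuity in the data, let $(u_\tau^1,\varphi^1), (u_\tau^2,\varphi^2) \in E_V^2$ with corresponding solutions $u^1, u^2$, and set $w = u^1 - u^2$. Then $w$ satisfies
\begin{align*}
\frac{d}{dt}\bigl(w(t) + \alpha^2 A w(t)\bigr) + \nu A w(t) + B(u^1(t)) - B(u^2(t)) = g(t,u^1_t) - g(t,u^2_t)
\end{align*}
with initial data $w(\tau) = u_\tau^1 - u_\tau^2$, $w_\tau = \varphi^1 - \varphi^2$. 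Pairing with $w(t)$ (equivalently, testing in $V$), using $b(u^1,w,w)=0$ from \eqref{2.3} to write $\langle B(u^1) - B(u^2), w\rangle = b(w,u^2,w)$, estimating this term via \eqref{2.2} together with the uniform $L^\infty(\tau,T;V)$ bound on $u^2$, and handling the delay term with hypothesis (H3) (or the integral form (H4)), I obtain a differential inequality of the form
\begin{align*}
\frac{d}{dt}\Bigl(\|w(t)\|^2 + \alpha^2\|w(t)\|_V^2\Bigr) \leq C(t)\Bigl(\|w(t)\|^2 + \alpha^2\|w(t)\|_V^2\Bigr) + C L_g^2 \|w_t\|_{C_V}^2,
\end{align*}
where $C(t)$ depends on the (uniform on compact time intervals) $V$-norm bounds of the solutions. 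Since the $\alpha^2\|w\|_V^2$ term is present on the left with a positive coefficient — this is the regularizing feature of the Voigt model that makes $V$-level estimates available without going to $D(A)$ — a Gronwall argument on $[\tau,t]$ controlling $\sup_{s \in [\tau-h,t]}(\|w(s)\|^2 + \alpha^2\|w(s)\|_V^2)$ by the initial quantity $\|(u_\tau^1,\varphi^1) - (u_\tau^2,\varphi^2)\|_{E_V^2}^2$ yields $\|U(t,\tau)(u_\tau^1,\varphi^1) - U(t,\tau)(u_\tau^2,\varphi^2)\|_{E_V^2} \leq C(t,\tau)\,\|(u_\tau^1,\varphi^1) - (u_\tau^2,\varphi^2)\|_{E_V^2}$, which is Lipschitz continuity in the data, in particular continuity.

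For joint continuity, continuity in $t$ is essentially built into Theorem \ref{thm2.1}: $u \in C([\tau,+\infty);V)$ gives continuity of $t \mapsto u(t)$ in $V$, and continuity of $t \mapsto u_t$ in $L_V^2$ (indeed in $\mathcal{C}_V$) follows since translating the continuous curve $u$ is continuous in the uniform norm on $[-h,0]$. Continuity in $\tau$, and joint continuity, can then be assembled from these pieces by a triangle-inequality argument combining the Lipschitz-in-data estimate with the time-continuity and the cocycle property; since the statement only asserts that the process is continuous (typically meaning: for each fixed $\tau \leq t$ the map $U(t,\tau)$ is continuous on $E_V^2$, plus the measurability/continuity in $(t,\tau)$ implicit in the definition), the data-continuity estimate is the heart of the matter. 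The main obstacle is therefore the energy estimate for the difference $w$: one must be careful that the nonlinear term $b(w,u^2,w)$ is genuinely controlled using only the $V$-regularity of the solutions (via \eqref{2.2} and the $L^\infty(\tau,T;V)$ bound), and that the delay term's contribution over $[\tau-h,t]$ is absorbed correctly — the integral hypothesis (H4) with its exponential weight $e^{\sigma s}$ is the clean tool here, producing the Gronwall-type closure after multiplying the inequality by $e^{\sigma t}$.
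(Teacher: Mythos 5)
Your proposal is correct and is essentially the argument the paper relies on: the paper's own ``proof'' of this lemma consists only of invoking Theorem~\ref{thm2.1} and deferring to \cite[Theorem 3.3]{CMD20}, and the energy estimate you outline for the difference of two solutions --- using the Voigt term $\alpha^2\|\nabla w\|^2$ on the left to close a Gronwall argument at the $V$ level, the identity $b(u,v,v)=0$ with \eqref{2.2} and the $L^\infty(\tau,T;V)$ bound to control the nonlinearity, and the integral hypothesis (H4) rather than the pointwise (H3) to absorb the delay over $[\tau-h,t]$ (which is the right choice, since the initial history $\varphi$ is only in $L_V^2$, not $\mathcal{C}_V$) --- is precisely the content of that cited result. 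No gaps.
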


\begin{proof}
It can be immediately obtained by Theorem \ref{thm2.1}, and one can refer to
\cite[Theorem 3.3]{CMD20} for the details.
\end{proof}

For the obtention of a pullback absorbing family for the process $\{U(t, \tau)\}_{t \geq \tau}$, we have the following lemma.

\begin{lem} \label{lem4.1}
Assume $f \in L_{l o c}^2(\mathbb{R} ; V^{\prime})$, $(u_\tau, \varphi) \in E_V^2$ and $g: \mathbb{R} \times C_V \rightarrow L^2(\Omega)$ satisfies \text{(H1)-(H4)}. Then, for any
\begin{align}\label{4.10}
0<\sigma<(\lambda_1^{-1}+\alpha^2)^{-1}(2\nu-4C_gC_6^{\frac12}(\lambda_1^{-1}+1)),
\end{align}
the solution $u=u(\cdot ; \tau, u_\tau, \varphi)$ of \eqref{1.1} satisfies
\begin{align}\label{4.11}
\begin{aligned}
\|\nabla u(t)\|^2+\eta_1 e^{- \sigma t}
\int_\tau^t e^{\sigma s}\|\nabla u(s)\|^2 \mathrm{d} s
\leq&
\alpha^{-2}(\lambda^{-1}+\alpha^2+2C_gC_6^{\frac12})e^{-\sigma(t-\tau)}||(u_\tau,\varphi)||^2_{E_V^2}\\
&+
\frac{1}{\beta \alpha^2}e^{-\sigma t} \int_\tau^t e^{\sigma s}||f(s)||^2_{V^{\prime}}\mathrm{d} s,
\end{aligned}
\end{align}
\begin{align}\label{4.12}
\begin{aligned}
\int_{t-h}^t||\nabla u(s)||^2\mathrm{d} s
\leq&
\eta_1^{-1}\big\{\alpha^{-2}(\lambda^{-1}+\alpha^2+2C_gC_6^{\frac12})e^{-\sigma(t-\tau-h)}||(u_\tau,\varphi)||^2_{E_V^2}\\
&+
\frac{1}{2\beta \alpha^2}e^{-\sigma (t-h)} \int_\tau^t e^{\sigma s}||f(s)||^2_{V^{\prime}}\mathrm{d} s\big\},
\end{aligned}
\end{align}
for all $t \geqslant \tau$, where
\begin{align}
&
\eta_1=\alpha^{-2}[2\nu-\sigma \lambda_1^{-1}-\alpha^2\sigma-(\beta+4C_gC_6^{\frac12})(\lambda_1^{-1}+1)]>0, \label{4.13}\\
&
\beta \in (0, (2\nu-\sigma\lambda_1^{-1}-\alpha^2\sigma)(\lambda_1^{-1}+1)^{-1}-4C_gC_6^{\frac12}).\label{4.14}
\end{align}
\end{lem}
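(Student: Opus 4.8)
The plan is to derive the a priori estimates \eqref{4.11} and \eqref{4.12} by working with the energy identity for the Navier--Stokes--Voigt system and the exponential weight $e^{\sigma t}$. First I would test the abstract equation in \eqref{91} with $u(t)$ in $V'\times V$; using $\langle \frac{d}{dt}(u+\alpha^2 Au),u\rangle = \tfrac12\frac{d}{dt}(\|u\|^2+\alpha^2\|\nabla u\|^2)$, the antisymmetry $b(u,u,u)=0$ from \eqref{2.3}, and $\langle Au,u\rangle=\|\nabla u\|^2$, I obtain
\begin{align*}
\tfrac12\tfrac{d}{dt}\big(\|u\|^2+\alpha^2\|\nabla u\|^2\big)+\nu\|\nabla u\|^2
=\langle f(t),u\rangle+\big(g(t,u_t),u\big).
\end{align*}
I would then multiply by $e^{\sigma t}$ and rewrite the left side as $\tfrac12\tfrac{d}{dt}\big(e^{\sigma t}(\|u\|^2+\alpha^2\|\nabla u\|^2)\big)-\tfrac{\sigma}{2}e^{\sigma t}(\|u\|^2+\alpha^2\|\nabla u\|^2)$, and bound $\|u\|^2\le\lambda_1^{-1}\|\nabla u\|^2$ by Poincaré to absorb the $\|u\|^2$ term into $\|\nabla u\|^2$.

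Next I would estimate the forcing and delay terms on the right. For $f$, Young's inequality gives $|\langle f,u\rangle|\le \tfrac{1}{2\beta}\|f\|_{V'}^2+\tfrac{\beta}{2}\|\nabla u\|^2$; this is where the free parameter $\beta$ from \eqref{4.14} enters. For the delay term, $|(g(t,u_t),u)|\le \|g(t,u_t)\|\,\|u\|$; here I would use $\|u\|\le C_6^{1/2}\|\nabla u\|$ (or the relevant embedding constant $C_6$ the paper uses — note $C_6$ appears in (H4) without being displayed, presumably the Poincaré/embedding constant with $\lambda_1^{-1}=C_6$ or similar), then Young's inequality to split into $\|g(t,u_t)\|^2$ and $\|\nabla u\|^2$ pieces, and crucially invoke assumption (H4), which after integrating in time controls $\int_\tau^t e^{\sigma s}\|g(s,u_s)\|^2 ds$ (using (H2), $g(s,0)=0$, with $v\equiv 0$) by $C_g^2\int_{\tau-h}^t e^{\sigma s}\|\nabla u(s)\|^2 ds$. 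The term over $[\tau-h,\tau]$ is handled by the initial data $\varphi$, producing the $2C_gC_6^{1/2}$ contribution to the coefficient of $\|(u_\tau,\varphi)\|^2_{E_V^2}$ in \eqref{4.11}. Collecting all $\|\nabla u\|^2$ terms, the net coefficient is exactly $\eta_1$ as in \eqref{4.13}, and the sign condition $\eta_1>0$ is guaranteed by the ranges \eqref{4.10} and \eqref{4.14} — this bookkeeping of constants, making sure the delay contribution can genuinely be absorbed on the left over the whole interval including the history segment, is the main obstacle.

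Having reached a differential inequality of the form $\tfrac{d}{dt}\big(e^{\sigma t}(\|u\|^2+\alpha^2\|\nabla u\|^2)\big)+\eta_1\alpha^2 e^{\sigma t}\|\nabla u\|^2\le \tfrac{1}{\beta}e^{\sigma t}\|f\|_{V'}^2 + (\text{history terms})$, I would integrate from $\tau$ to $t$, use $e^{\sigma\tau}(\|u_\tau\|^2+\alpha^2\|\nabla u_\tau\|^2)\le (\lambda_1^{-1}+\alpha^2)e^{\sigma\tau}\|(u_\tau,\varphi)\|^2_{E_V^2}$ together with the history contribution to get the prefactor $\alpha^{-2}(\lambda^{-1}+\alpha^2+2C_gC_6^{1/2})$, divide through by $\alpha^2 e^{\sigma t}$, and drop the nonnegative $\|u(t)\|^2$ term on the left to arrive at \eqref{4.11}. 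Finally, for \eqref{4.12} I would integrate the inequality $\eta_1 e^{\sigma s}\|\nabla u(s)\|^2 \le \tfrac{d}{ds}(\cdots)$-type bound (equivalently, integrate \eqref{4.11}'s dissipation term) over $[t-h,t]$: since $e^{\sigma s}\ge e^{\sigma(t-h)}$ on that interval, $\int_{t-h}^t\|\nabla u(s)\|^2 ds\le e^{-\sigma(t-h)}\int_{t-h}^t e^{\sigma s}\|\nabla u(s)\|^2 ds$, and then I bound the latter by $\eta_1^{-1}$ times the already-established right-hand side of \eqref{4.11} (with $t$ replaced appropriately and the shift $t-h$ producing the $e^{-\sigma(t-\tau-h)}$ factor), which yields \eqref{4.12}. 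The remaining work is purely routine algebraic simplification of constants.
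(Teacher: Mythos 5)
Your proposal follows essentially the same route as the paper: the $e^{\sigma t}$-weighted energy identity obtained by testing with $u$, Poincar\'e to absorb $\|u\|^2$, Young's inequality with the parameter $\beta$ for the forcing term, assumption (H4) (via (H2), comparing with the zero solution) together with the embedding constant $C_6$ to absorb the delay term into the dissipation and produce the $2C_gC_6^{1/2}$ history contribution, and finally the elementary bound $e^{\sigma s}\ge e^{\sigma(t-h)}$ on $[t-h,t]$ to deduce \eqref{4.12} from \eqref{4.11}. The argument and the bookkeeping of the constants $\eta_1$, $\beta$, $\sigma$ match the paper's proof, so no further comment is needed.
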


\begin{proof}
Multiplying \eqref{1.1}$_1$ by $u \in V$ and integrating the $\Omega$, we have
\begin{align}\label{4.14}
\frac12 \frac{\mathrm{d}}{\mathrm{d}t}(\|u(t)\|^2+\alpha^2\|\nabla u(t)\|^2) +\nu \|\nabla u(t)\|^2 =\langle f(t),u(t)\rangle_{V^{\prime} \times V}+\langle g(t,u_t),u(t)\rangle_{V^{\prime} \times V}.
\end{align}
Furthermore, multiplying \eqref{4.14} by $e^{\sigma t}$, and then integrating in the interval $[\tau, t]$, we infer from the Poincar\'{e} inequality
\begin{align}\label{4.15}
\begin{aligned}
e^{\sigma t}(\|u(t)\|^2+\alpha^2 \|\nabla u(t)\|^2)
\leq&
(\lambda_1^{-1}+\alpha^2) e^{\sigma \tau}\|\nabla u_\tau\|^2+(\sigma \lambda_1^{-1}+\alpha^2 \sigma-2 \nu)\int_\tau^t e^{\sigma s}\|\nabla u(s)\|^2\mathrm{d}s \\
&+
2\int_\tau^te^{\sigma s}\langle f(s),u(s)\rangle_{V^{\prime} \times V}\mathrm{d} s+2\int_\tau^t e^{\sigma s}\langle g(s, u_s),u(s)\rangle_{V^{\prime} \times V}\mathrm{d}s.
\end{aligned}
\end{align}
Using the Young inequality, we can derive
\begin{align}\label{4.16}
2e^{\sigma t}\langle f(t),u(t)\rangle_{V^{\prime} \times V} \leq \beta e^{\sigma t}\|u(t)\|^2_V+\frac{1}{\beta}e^{\sigma t}\|f(t)\|^2_{V^{\prime}}.
\end{align}
Taking account of assumptions \text{(H2)-(H4)},
%using the H\"{o}lder and Cauchy inequalities,
and combining that $(L^2(\Omega))^3 \hookrightarrow (H^{-1}(\Omega))^3$ is compact,
there exists a constant $C_6$(only depends on $\Omega$) such that
\begin{align}\label{4.17}
\begin{aligned}
2\int_\tau^t e^{\sigma s}\langle g(s, u_s),u(s)\rangle_{V^{\prime} \times V}\mathrm{d}s
%&\leq
%2\int_\tau^t\big(e^{\frac12 \sigma s}\|g(s, u_s)\|_{V^{\prime}}\big)\cdot \big(e^{\frac12 \sigma s}\|u(s)\|_V^2\big)\mathrm{d}s \\
%&\leq
%2\big(\int_\tau^te^{\sigma s}\|g(s, u_s)\|_{V^{\prime}}^2 \mathrm{d}s\big)^{\frac12}\cdot \big(\int_\tau^t e^{ \sigma s}\|u(s)\|_V^2\mathrm{d}s\big)^{\frac12} \\
%&\leq
%2\big(C_g^2C_7\int_{\tau-h}^te^{\sigma s}\|u(s)\|_{V}^2 \mathrm{d}s\big)^{\frac12}\cdot \big(\int_\tau^t e^{ \sigma s}\|u(s)\|_V^2\mathrm{d}s\big)^{\frac12} \\
%&\leq
%2C_gC_7^{\frac12}\big(\int_{\tau-h}^\tau e^{\sigma s}\|u(s)\|_{V}^2 \mathrm{d}s + 2\int_\tau^t e^{ \sigma s}\|u(s)\|_V^2\mathrm{d}s\big) \\
%&\leq
%2C_gC_7^{\frac12}\big(e^{\sigma \tau} \int_0^h e^{\sigma s}\|\varphi(s)\|^2_V \mathrm{d}s+2\int_\tau^t e^{ \sigma s}\|u(s)\|_V^2\mathrm{d}s\big) \\
&\leq
2CgC_6^{\frac12}e^{\sigma \tau}\|\varphi\|^2_{L_V^2}+4C_g C_6^{\frac12} \int_\tau^t e^{\sigma s} \|u(s)\|_V^2 \mathrm{d}s,
\end{aligned}
\end{align}
Inserting \eqref{4.15}-\eqref{4.17} to \eqref{4.14} and rearranging them briefly, we arrive at
\begin{align}\label{4.18}
\begin{aligned}
&
e^{\sigma t}(\|u(t)\|^2+\alpha^2 \|\nabla u(t)\|^2)\\
& \quad \leq
(\lambda_1^{-1}+\alpha^2+2C_gC_6^{\frac12}) e^{\sigma \tau}\|(u_\tau, \varphi)\|^2_{E^2_{V}}+\frac{1}{\beta}\int_\tau^t e^{\sigma s}\|f(s)\|_{V^{\prime}}^2\mathrm{d} s \\
& \quad \quad +
\big[\sigma \lambda_1^{-1}+\alpha^2 \sigma-2 \nu+(\beta+4C_gC_6^{\frac12})(\lambda_1^{-1}+1)\big]\int_\tau^t e^{\sigma s}\|\nabla u(s)\|^2\mathrm{d}s,
\end{aligned}
\end{align}
where $\sigma$ and $\beta$ are defined by \eqref{4.13} and \eqref{4.14}, respectively.

%Denote
%\begin{align}
%\eta_1=\alpha^{-2}[2 \nu-\sigma \lambda_1^{-1}-\alpha^2\sigma-(\beta+4C_gC_6^{\frac12})(\lambda_1^{-1}+1)] >0.
%\end{align}
By \eqref{4.13}, then \eqref{4.18} yields
\begin{align}\label{4.20}
\begin{aligned}
&
\|\nabla u(t)\|^2+ \eta_1\int_\tau^t e^{\sigma s}\|\nabla u(s)\|^2\mathrm{d}s\\
& \quad \leq
\alpha^{-2}(\lambda_1^{-1}+\alpha^2+2C_gC_6^{\frac12})e^{-\sigma(t-\tau)}\|(u_\tau,\varphi)\|_{E^2_V}^2
+\frac{1}{\beta\alpha^2}e^{-\sigma t}\int_\tau^t e^{\sigma s}\|f(s)\|^2_{V^{\prime}} \mathrm{d}s.
\end{aligned}
\end{align}
Furthermore, for any $t-h \geq \tau$, we notice that
\begin{align}\label{4.22}
\int_\tau^t e^{\sigma s}\|\nabla u(s)\|^2\mathrm{d}s \geq
\int_{t-h}^t e^{\sigma s}\|\nabla u(s)\|^2\mathrm{d}s
\geq
e^{\sigma(t-h)}\int_{t-h}^t e^{\sigma s}\|\nabla u(s)\|^2\mathrm{d}s.
\end{align}
Thus, from \eqref{4.20}, we obtain
%\begin{align}\label{4.22}
%\begin{aligned}
%&
%e^{\sigma(t-h)} \cdot \eta_1 e^{- \sigma t}\int_{t-h}^t e^{\sigma s}\|\nabla u(s)\|^2\mathrm{d}s=\eta_1e^{-\sigma h}\int_{t-h}^t\|\nabla u(s)\|^2 \mathrm{d}s\\
%&\leq
%\alpha^{-2}(\lambda_1^{-1}+\alpha^2+2C_gC_7^{\frac12})e^{-\sigma(t-\tau)}\|(u_\tau,\varphi)\|_{E^2_V}^2
%+\frac{1}{\beta\alpha^2}e^{-\sigma t}\int_\tau^t e^{\sigma s}\|f(s)\|^2_{V^{\prime}} \mathrm{d}s,
%\end{aligned}
%\end{align}
%i.e.,
\begin{align}\label{4.23}
\begin{aligned}
\int_{t-h}^t\|\nabla u(s)\|^2 \mathrm{d}s
\leq&
\eta_1^{-1}\big\{\alpha^{-2}(\lambda_1^{-1}+\alpha^2+2C_gC_6^{\frac12})e^{-\sigma(t-\tau-h)}\|(u_\tau,\varphi)\|_{E^2_V}^2\\
&+
\frac{1}{\beta\alpha^2}e^{-\sigma (t-h)}\int_\tau^t e^{\sigma s}\|f(s)\|^2_{V^{\prime}} \mathrm{d}s\big\}.
\end{aligned}
\end{align}
The proof is complete.
\end{proof}

\begin{defn}\label{defn4.1}
Let
\begin{align}
\mathcal{R}_\sigma:=\{\rho_\sigma(t): \mathbb{R}\mapsto (0,+ \infty)| \mathop{\lim}_{t \rightarrow -\infty}e^{\sigma t}\rho^2_\sigma(t)=0\}.
\end{align}
By $\mathcal{D}^{V}_\sigma$ we denote the class of all families $\widehat{D}^{V}=\{D^{V}(t): t \in \mathbb{R}\} \subset \mathcal{P}({E^2_V})$ such that $D^V(t) \subset \overline{\mathcal{B}}_{E^2_V}(0, \rho_{\widehat{D}^{V}}(t))$, for some $\rho_{\widehat{D}^{V}} \in \mathcal{R}_\sigma$, where $\overline{\mathcal{B}}_{E^2_V}(0, \rho_{\widehat{D}^{V}}(t))$ denotes the closed ball in $E_V^2$ centered at 0 with radius $\rho_{\widehat{D}^{V}}(t)$.
\end{defn}

%\begin{defn} For any $\sigma>0$, we will denote by $\mathcal{D}_\sigma^V$ the class of all families of non-empty subsets $\widehat{D}^V=\{D^V(t): t \in \mathbb{R}\} \subset \mathcal{P}(E^2_V)$ such that
%$$
%\mathop{\lim}_{\tau \rightarrow-\infty}\big(e^{\sigma \tau} \mathop{\sup}_{v \in D(\tau)}\|v\|_V^2\big)=0.
%$$
%\end{defn}

%Accordingly to the notation introduced in the previous section, $\mathcal{D}_F^V$ will denote the class of families $\widehat{D}=\{D(t)=D: t \in \mathbb{R}\}$ with $D$ a fixed non-empty bounded subset of $V$.
Based on the above results, adding some suitable conditions to the function $f$ of problem \eqref{1.1}, then we obtain the existence of the $\mathcal{D}^{E^2_V}_\sigma$-absorbing family of process $\{U(t, \tau)\}_{t \geq \tau}$ on $E_V^2$.

\begin{lem}\label{lem4.2}
%{\bf{(Pullback $\mathcal{D}$-absorbing set)}}.
Assume $f \in L_{l o c}^2(\mathbb{R} ; V^{\prime})$ satisfying
\begin{align}\label{4.26}
\int^t_{-\infty} e^{\sigma s}\|f(s)\|^2_{V^{\prime}} \mathrm{d}s < + \infty, \enskip \forall t \in \mathbb{R},
\end{align}
$(u_\tau, \varphi) \in E_V^2$ and $g: \mathbb{R} \times C_V \rightarrow L^2(\Omega)$ satisfies \text{(H1)-(H4)}, the family $\widehat{B}^{V}$ %$\widehat{\mathcal{B}}^{V}$
given by
\begin{align}\label{4.27}
B^{V}(t):=\big\{(u(t), u_t) \in E_V^2 \, \big| \, \|(u(t), u_t)\|_{E_V^2}^2 \leq R_1(t),\|\frac{d u_t(s)}{d s}\|_{L^2((-h, 0) ; V)}^2 \leq R_2(t)\big\},
\end{align}
is pullback $\mathcal{D}_\sigma^{E^2_V}$-absorbing for the process $\{U(t, \tau)\}_{t \geq \tau}$ defined by \eqref{4.9}, where
\begin{align}
&
R_1^2(t)=(1+\eta_1^{-1} e^{\sigma h}) \rho_\sigma(t), \label{4.28}\\
&
R^2_2(t)=1+c e^{2h\sigma}\rho_\sigma(t)+c\|f\|^2_{L^2([t-h, t]; V^{\prime})}+c \mathop{\max}_{s \in [t-2h, t]}\|\nabla u(s)\|^4, \label{4.29}\\
&
\rho_\sigma(t)=ce^{-\sigma t} \int_{-\infty}^t e^{\sigma s}\|f(s)\|^2_{V^{\prime}} \mathrm{d} s,\label{4.30}
\end{align}
where $c >0$ is a constant that depends only on $\alpha$, $\lambda_1$, $C_g$ and $\nu$.
\end{lem}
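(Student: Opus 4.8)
The plan is to verify the pullback $\mathcal{D}_\sigma^{E_V^2}$-absorbing property of $\widehat{B}^{V}$ directly from the dissipative estimates of Lemma~\ref{lem4.1}. Fix $t\in\mathbb{R}$ and an arbitrary family $\widehat{D}^{V}=\{D^{V}(\tau)\}_{\tau\in\mathbb{R}}\in\mathcal{D}_\sigma^{E_V^2}$; I must produce $\tau_0=\tau_0(t,\widehat{D}^{V})\le t$ with $U(t,\tau)D^{V}(\tau)\subset B^{V}(t)$ for every $\tau\le\tau_0$. A generic element of $U(t,\tau)D^{V}(\tau)$ is $(u(t),u_t)$, where $u=u(\cdot;\tau,u_\tau,\varphi)$ is the weak solution of \eqref{1.1} with $(u_\tau,\varphi)\in D^{V}(\tau)\subset\overline{\mathcal{B}}_{E_V^2}(0,\rho_{\widehat{D}^{V}}(\tau))$ and $\rho_{\widehat{D}^{V}}\in\mathcal{R}_\sigma$. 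The recurring mechanism is that $e^{\sigma\tau}\|(u_\tau,\varphi)\|_{E_V^2}^2\le e^{\sigma\tau}\rho_{\widehat{D}^{V}}^2(\tau)\to0$ as $\tau\to-\infty$, so every occurrence of the initial datum (always carrying a factor $e^{-\sigma(t-\tau)}=e^{-\sigma t}e^{\sigma\tau}$) can be made arbitrarily small by pushing $\tau$ far enough into the past, while by \eqref{4.26} the forcing integral obeys $e^{-\sigma t}\int_\tau^t e^{\sigma s}\|f(s)\|_{V'}^2\,\mathrm{d}s\le c^{-1}\rho_\sigma(t)<\infty$. As a byproduct, since $e^{\sigma t}R_1^2(t)=(1+\eta_1^{-1}e^{\sigma h})c\int_{-\infty}^t e^{\sigma s}\|f(s)\|_{V'}^2\,\mathrm{d}s\to0$ as $t\to-\infty$, the family $\widehat{B}^{V}$ itself lies in $\mathcal{D}_\sigma^{E_V^2}$.

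For the $E_V^2$-bound I would simply add the two estimates of Lemma~\ref{lem4.1}. Since $\|(u(t),u_t)\|_{E_V^2}^2=\|\nabla u(t)\|^2+\int_{t-h}^t\|\nabla u(s)\|^2\,\mathrm{d}s$, estimate \eqref{4.11} controls the first summand and \eqref{4.12} the second; replacing $\|(u_\tau,\varphi)\|_{E_V^2}^2$ by $\rho_{\widehat{D}^{V}}^2(\tau)$ and choosing $\tau$ so negative that the initial-data contribution drops below $\tfrac12\rho_\sigma(t)$, the remaining forcing terms are bounded by a fixed multiple of $(1+\eta_1^{-1}e^{\sigma h})\rho_\sigma(t)$; absorbing that multiple into the constant $c$ of \eqref{4.30} yields $\|(u(t),u_t)\|_{E_V^2}^2\le R_1^2(t)$ for all $\tau\le\tau_1$.

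For the derivative bound, which is the substantive part, I would return to the abstract formulation (Theorem~\ref{thm2.1}, \eqref{91}) and write it as $(I+\alpha^2A)u'(s)=F(s)$ with $F(s):=f(s)+g(s,u_s)-\nu Au(s)-B(u(s))\in V'$. The identity $\langle(I+\alpha^2A)v,v\rangle=\|v\|^2+\alpha^2\|\nabla v\|^2\ge\alpha^2\|v\|_V^2$ shows $I+\alpha^2A:V\to V'$ is an isomorphism with $\|(I+\alpha^2A)^{-1}\|_{\mathcal{L}(V',V)}\le\alpha^{-2}$, hence $\|u'(s)\|_V\le\alpha^{-2}\|F(s)\|_{V'}$; here $u'\in L^2(\tau,T;V)$ by Theorem~\ref{thm2.1}, so this is legitimate a.e. I then estimate $\|F(s)\|_{V'}$ term by term: $\|Au(s)\|_{V'}=\|u(s)\|_V$, $\|B(u(s))\|_{V'}\le C_4\|u(s)\|_V^2$ by \eqref{2.6}, and, using (\textsf{H2})--(\textsf{H3}) together with the compact embedding $H\hookrightarrow V'$, $\|g(s,u_s)\|_{V'}\le C_6^{1/2}\|g(s,u_s)\|\le C_6^{1/2}L_g\|u_s\|_{C_V}$. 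Squaring, integrating over $[t-h,t]$, and using $\|u_s\|_{C_V}^2\le\max_{r\in[t-2h,t]}\|\nabla u(r)\|^2$ and $\|u(s)\|_V^4\le\max_{r\in[t-2h,t]}\|\nabla u(r)\|^4$ for $s\in[t-h,t]$, I arrive at
$$
\int_{t-h}^t\|u'(s)\|_V^2\,\mathrm{d}s\le c\Big(\|f\|_{L^2([t-h,t];V')}^2+\int_{t-h}^t\|\nabla u(s)\|^2\,\mathrm{d}s+\max_{r\in[t-2h,t]}\|\nabla u(r)\|^2+\max_{r\in[t-2h,t]}\|\nabla u(r)\|^4\Big).
$$
Here $\int_{t-h}^t\|\nabla u(s)\|^2\,\mathrm{d}s$ is handled by \eqref{4.12}, while for $r\in[t-2h,t]$ the pointwise bound \eqref{4.11} gives $\|\nabla u(r)\|^2\le(\text{small initial-data term})+c^{-1}e^{2h\sigma}\rho_\sigma(t)$; taking $\tau<t-2h$ sufficiently negative annihilates the initial-data pieces, the pure $\max\|\nabla u\|^2$ term is absorbed through $x^2\le\tfrac12(1+x^4)$ into $1+c\max\|\nabla u\|^4$, and one obtains $\int_{t-h}^t\|u'(s)\|_V^2\,\mathrm{d}s\le R_2^2(t)$ with $R_2^2(t)$ exactly as in \eqref{4.29}, for all $\tau\le\tau_2$. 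Setting $\tau_0=\min\{\tau_1,\tau_2,t-2h\}$ finishes the argument.

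I expect the derivative estimate to be the main obstacle: the correct move is to recover $u'$ from the elliptic relation $(I+\alpha^2A)u'=F$ rather than differentiating \eqref{1.1}, after which each piece of $F$ must be measured in the $V'$-norm — the inertial term $B(u)$ via \eqref{2.6}, and especially the delay term $g(s,u_s)$ via (\textsf{H2})--(\textsf{H3}) and the embedding $H\hookrightarrow V'$ — and, most delicately, one needs a bound on $\max_{r\in[t-2h,t]}\|\nabla u(r)\|$ that does not deteriorate as $\tau\to-\infty$; this uniform control is precisely what the exponential pointwise estimate \eqref{4.11} supplies once the starting time is taken far enough in the past (which also guarantees $\tau+h<t-h$, so that each $u_s$, $s\in[t-h,t]$, genuinely lies in $C_V$). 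The $E_V^2$-bound and the membership $\widehat{B}^{V}\in\mathcal{D}_\sigma^{E_V^2}$ then follow routinely from Lemma~\ref{lem4.1} and assumption \eqref{4.26}.
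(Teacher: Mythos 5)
Your argument is correct, and its first half (the $R_1$-bound, obtained by adding the two estimates of Lemma \ref{lem4.1} and letting the $e^{-\sigma(t-\tau)}\|(u_\tau,\varphi)\|^2_{E_V^2}$ terms vanish as $\tau\to-\infty$) coincides with the paper's Step 1. Where you genuinely diverge is the bound on $\int_{t-h}^t\|\partial_s u(s)\|_V^2\,\mathrm{d}s$. The paper proceeds by an energy estimate: it tests \eqref{1.1}$_1$ with $\partial_t u$, obtaining \eqref{4.35}, extracts $\|\partial_t u\|_V^2$ from $\|\partial_t u\|^2+\alpha^2\|\nabla\partial_t u\|^2$ via the norm equivalence \eqref{4.39}, and integrates in time, which forces it to carry the term $\frac{\nu}{2}\|\nabla u(t-h)\|^2$ and to impose the sign condition $4C_7-C_4+2>0$ on the equivalence constant so that the $\|\partial_t u\|_V^2$ contributions from the Young inequalities can be absorbed on the left. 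You instead read the equation as the elliptic relation $(I+\alpha^2A)u'(s)=F(s)$ and use coercivity of $I+\alpha^2A:V\to V'$ to get the pointwise bound $\|u'(s)\|_V\le c\,\|F(s)\|_{V'}$, then measure $f$, $\nu Au$, $B(u)$ (via \eqref{2.6}) and $g(s,u_s)$ (via (\textsf{H2})--(\textsf{H3}) and $H\hookrightarrow V'$) in $V'$ and integrate. This is cleaner: it avoids the absorption step and the somewhat delicate condition on $C_7$ altogether, yields a pointwise-in-time control of $\|u'(s)\|_V$ rather than only an integrated one, and still lands on a right-hand side of exactly the form \eqref{4.29} (the $\max\|\nabla u\|^2$ term being folded into $1+c\max\|\nabla u\|^4$, the $L^2$-in-time term handled by \eqref{4.12}, the $\max$ terms by \eqref{4.11} applied on $[t-2h,t]$). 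The only price is that you invoke the Lipschitz hypothesis (\textsf{H3}) pointwise, which requires $u_s\in C_V$ for $s\in[t-h,t]$, i.e. $\tau\le t-2h$ --- a restriction you correctly note and which the paper also imposes in its Step 2; the paper instead uses the integrated hypothesis (\textsf{H4}) for the delay term, but both routes deliver the same class of bound.
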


\begin{proof}
First, we observe that for all $t \in \mathbb{R}$,
$$
B^{V}(t)\subset  \big\{U(t, \tau)(u_\tau, \varphi)=(u(t),u_t) \in E_V^2:\|(u(t), u_t)\|_{E_V^2} \leq R_1(t)\},
$$
with
$$
\mathop{\lim}_{t \rightarrow - \infty} e^{\sigma t}R_1^2(t)=0,
$$
and so $\widehat{B}^{V} \in \mathcal{D}^{E^2_V}_\sigma$.
Now, we prove the $U(t,\tau)D^{V}(\tau) \subset B^{V}(t)$, for all $\tau \leq t$. To do this, we proceed in two steps.

{\bf {Step 1:}}  This step concerns the asymptotic estimate using $R_1(t)$ for $t \in \mathbb{R}$, fixed. It may be proved as follows.
For any $\widehat{D}^{V}=\{D^{V}(t): t \in \mathbb{R}\} \in \mathcal{D}^{V}_\sigma$ and $(u_\tau, \varphi) \in D^{V}(\tau)$, we set $u(\cdot):=u(\cdot,; \tau, u_\tau, \varphi)$.
By the definition, we have
\begin{align}\label{4.31}
\|U(t,\tau)(u_\tau, \varphi)\|^2_{E_V^2}=\|\nabla u(s)\|^2+\int_{-h}^0\|\nabla \varphi(s)\|^2 \mathrm{d}s=\|\nabla u(s)\|^2+\int_{t-h}^t\|\nabla u(s)\|^2 \mathrm{d}s.
\end{align}
It follows from \eqref{4.12} that for any $t-h \geq \tau$,
%From \eqref{4.12}, for any $t-h \geq \tau$, we have
\begin{align}
\begin{aligned}
\int_{t-h}^t||\nabla u(s)||^2\mathrm{d} s
\leq&
\eta_1^{-1}\alpha^{-2}(\lambda^{-1}+\alpha^2+2C_gC_6^{\frac12})e^{-\sigma(t-\tau-h)}||(u_\tau,\varphi)||^2_{E_V^2}\\
&+
\frac{1}{2\beta \eta_1 \alpha^2}e^{-\sigma (t-h)} \int_{- \infty}^t e^{\sigma s}||f(s)||^2_{V^{\prime}}\mathrm{d} s,
\end{aligned}
\end{align}
for any $(u_\tau, \varphi) \in E_V^2$.

We substitute the inequality and \eqref{4.11} in \eqref{4.31}, by the definition of $\rho_\sigma(t)$, and obtain
\begin{align}\label{94}
\begin{aligned}
\|U(t,\tau)(u_\tau, \varphi)\|^2_{E_V^2}
%\leq&
%\alpha^{-2}(\lambda^{-1}+\alpha^2+2C_gC_7^{\frac12})e^{-\sigma(t-\tau)}||(u_\tau,\varphi)||^2_{E_V^2}
%,\\
%&+
%\eta_1^{-1}\alpha^{-2}(\lambda^{-1}+\alpha^2+2C_gC_7^{\frac12})e^{-\sigma(t-\tau-h)}||(u_\tau,\varphi)||^2_{E_V^2}\\
%&+
%\frac{1}{\beta \alpha^2}e^{-\sigma t} \int_\tau^t e^{\sigma s}||f(s)||^2_{V^{\prime}}\mathrm{d} s
%+
%\frac{1}{\beta \eta_1 \alpha^2}e^{-\sigma (t-h)} \int_{- \infty}^t e^{\sigma s}||f(s)||^2_{V^{\prime}}\mathrm{d} s,\\
\leq&
ce^{-\sigma(t-\tau)}||(u_\tau,\varphi)||^2_{E_V^2}+ce^{-\sigma t} \int_{- \infty}^t e^{\sigma s}||f(s)||^2_{V^{\prime}}\mathrm{d} s\\
&+
c\eta_1^{-1}e^{-\sigma(t-\tau-h)}||(u_\tau,\varphi)||^2_{E_V^2}
+
c \eta_1^{-1}e^{-\sigma (t-h)} \int_{- \infty}^t e^{\sigma s}||f(s)||^2_{V^{\prime}}\mathrm{d} s\\
\leq&
c(1+\eta_1^{-1}e^{\sigma h}) e^{-\sigma(t-\tau)}||(u_\tau,\varphi)||^2_{E_V^2}
+(1+\eta_1^{-1}e^{\sigma h})\rho_\sigma(t)\\
\leq&
c(1+\eta_1^{-1}e^{\sigma h}) e^{-\sigma(t-\tau)}||(u_\tau,\varphi)||^2_{E_V^2}+R_1^2(t),
\end{aligned}
\end{align}
where $\rho_\sigma(t)$ and $R_1(t)$ are given by \eqref{4.28} and \eqref{4.30}, and $c >0$ is a constant that depends only on $\alpha$, $\lambda_1$, $C_g$ and $\nu$.
Hence,
\begin{align}\label{4.34}
\|U(t,\tau)(u_\tau, \varphi)\|^2_{E_V^2}\leq R_1^2(t),
\end{align}
as $e^{ \sigma \tau} \rightarrow 0$ when $\tau \rightarrow -\infty$.

{\bf {Step 2:}}  This step concerns the asymptotic estimate using $R_2(t)$. We assume that $t-2h \geq \tau$.
Multiplying \eqref{1.1}$_1$ by $\frac{\partial u}{\partial t}=\partial_t u \in V$ and integrating the resultant over $\Omega$, we derive
\begin{align}\label{4.35}
\begin{aligned}
&
\frac{\nu}{2}\frac{\mathrm{d}}{\mathrm{d}t}\|\nabla u(t)\|^2+\|\partial_t u(t)\|^2+\alpha^2 \|\nabla \partial_t u(t)\|^2\\
& \quad \leq
|\langle B(u(t)), \partial_t u(t)\rangle|_{V^{\prime} \times V}+|\langle f(t), \partial_t u(t)\rangle|_{V^{\prime} \times V}+|\langle g(t, u_t), \partial_t u(t)\rangle|_{V^{\prime} \times V}.
\end{aligned}
\end{align}
Besides, in light of \eqref{2.6},
%using the Cauchy and  Poincar\'{e} inequalities,
the following inequalities hold
\begin{align}
| \langle B(u(t)),\partial_t u (t)\rangle |_{V^{\prime} \times V}
%\leq&
%\|B(u(t))\|_{V^{\prime}} \cdot \|\partial_t u(t)\|_V\\
%\leq&
%C_4\|u(t)\|_V^2 \cdot \|\partial_t u(t)\|_V \\
%\leq&
%C_4(\lambda_1^{-1}+1)\|\nabla u(t)\|^2\cdot \|\partial_t u(t)\|_V\\
\leq&
C_4(\lambda_1^{-1}+1)^2\|\nabla u(t)\|^4 + \frac{C_4}{4}\|\partial_t u(t)\|^2_V, \label{4.36}\\
| \langle f(t),\partial_t u (t) \rangle |_{V^{\prime} \times V}
\leq&
\|f(t)\|_{V^{\prime}}^2+\frac14 \|\partial_t u(t)\|_V^2, \label{4.37}\\
| \langle g(t, u_t),\partial_t u \rangle |_{V^{\prime} \times V}
\leq&
\|g(t, u_t)\|^2_{V^{\prime}}+\frac14 \|\partial_t u\|_V^2
\leq
C_6\|g(t, u_t)\|^2+\frac14 \|\partial_t u\|_V^2. \label{4.38}
\end{align}
%\begin{align}\label{4.37}
%\begin{aligned}
%| \langle f(t),\partial_t u (t) \rangle |_{V^{\prime} \times V}
%\leq&
%\|f(t)\|_{V^{\prime}}^2+\frac14 \|\partial_t u(t)\|_V^2,
%\end{aligned}
%\end{align}
%and
%\begin{align}\label{4.38}
%\begin{aligned}
%| \langle g(t, u_t),\partial_t u \rangle |_{V^{\prime} \times V}
%\leq
%\|g(t, u_t)\|^2_{V^{\prime}}+\frac14 \|\partial_t u\|_V^2
%\leq
%C_6\|g(t, u_t)\|^2+\frac14 \|\partial_t u\|_V^2.
%\end{aligned}
%\end{align}
Meanwhile, by the equivalent property of the norm of space $V$, there exists a constant $C_7 >0$ satisfying $4C_7-C_4+2>0$ %(only depends on $\Omega$)
and $C_7^{\prime}$ such that
\begin{align}\label{4.39}
C_7||\partial_t u(t)||^2_{V} \leq \|\partial_t u(t)\|^2+\alpha^2\|\nabla \partial_t u(t)\|^2_V \leq C_7^{\prime}||\partial u(t)||^2_{V}.
%\enskip \forall \, \partial_t u(t) \in V.
\end{align}
Inserting \eqref{4.36}-\eqref{4.39} into \eqref{4.35}, we get the estimates
\begin{align}\label{4.40}
\begin{aligned}
\frac{\nu}{2} \frac{\mathrm{d}}{\mathrm{d}t}\|\nabla u(t)\|^2
\leq
C_4(\lambda_1^{-1}+1)^2\|\nabla u(t)\|^4+\frac{C_4-4C_7-2}{4}\|\partial_t u(t)\|^2_V
+\|f(t)\|^2_{V^{\prime}}+C_6\|g(t, u_t)\|^2.
\end{aligned}
\end{align}
Integrating over $[t-h, t]$, we notice that
%\begin{align}\label{4.41}
%\begin{aligned}
%&
%\frac{\nu}{2} (\|\nabla u(t)\|^2-\|\nabla u(t-h)\|^2)+
%\frac{4C_6-C_4+2}{4} \int_{t-h}^t \|\partial_s u(s)\|^2_V \mathrm{d}s\\
%&\leq
%C_4(\lambda_1^{-1}+1)^2\int_{t-h}^t \|\nabla u(s)\|^4 \mathrm{d} s+\int_{t-h}^t \|f(s)\|^2_{V^{\prime}} \mathrm{d} s+C_7\int_{t-h}^t \|g(s, u_s)\|^2 \mathrm{d} s,
%\end{aligned}
%\end{align}
%i.e.,
\begin{align}\label{4.42}
\begin{aligned}
\frac{4C_7-C_4+2}{4} \int_{t-h}^t \|\partial_s u(s)\|^2_V \mathrm{d}s
\leq&
\frac{\nu}{2} \|\nabla u(t-h)\|^2+C_4(\lambda_1^{-1}+1)^2\int_{t-h}^t \|\nabla u(s)\|^4 \mathrm{d} s\\
&+
\int_{t-h}^t \|f(s)\|^2_{V^{\prime}} \mathrm{d} s+C_6\int_{t-h}^t \|g(s, u_s)\|^2 \mathrm{d} s.
\end{aligned}
\end{align}
From assumptions \text{(H2)}, \text{(H4)} and the Poincar\'{e} inequality, it follows
\begin{align}\label{4.43}
\int_{t-h}^t \|g(s, u_s)\|^2 \mathrm{d} s
\leq
C_g^2 \int_{t-2h}^t \|u(s)\|^2_V \mathrm{d} s
\leq
C_g^2(\lambda_1^{-1}+1)\int_{t-h}^t \|\nabla u(s)\|^2 \mathrm{d} s.
\end{align}
By this estimate and \eqref{4.42}, one has
\begin{align}\label{4.44}
\begin{aligned}
\frac{4C_7-C_4+2}{4} \int_{t-h}^t \|\partial_s u(s)\|^2_V \mathrm{d}s
\leq&
\frac{\nu}{2} \|\nabla u(t-h)\|^2+C_4(\lambda_1^{-1}+1)^2\int_{t-h}^t \|\nabla u(s)\|^4 \mathrm{d} s\\
&+
\int_{t-h}^t \|f(s)\|^2_{V^{\prime}} \mathrm{d} s+C_6\int_{t-h}^t \|g(s, u_s)\|^2 \mathrm{d} s\\
%\leq&
%\frac{\nu}{2} \|\nabla u(t-h)\|^2+C_4 (\lambda_1^{-1}+1)^2 \int_{t-2h}^t \|\nabla u(s)\|^4 \mathrm{d} s\\
%&+
%\int_{t-2h}^t \|f(s)\|^2_{V^{\prime}} \mathrm{d} s+C_g^2(\lambda_1^{-1}+1)\int_{t-2h}^t \|\nabla u(s)\|^2 \mathrm{d} s\\
\leq&
\frac{\nu}{2} \|\nabla u(t-h)\|^2+C_6C_g^2(\lambda_1^{-1}+1)\int_{t-2h}^t \|\nabla u(s)\|^2 \mathrm{d} s\\
&+
2C_4 h(\lambda_1^{-1}+1)^2 \mathop{\max}_{s \in [t-2h, t]}\|\nabla u(s)\|^4 +\|f\|^2_{L^2([t-h, t; V^{\prime}])}.
\end{aligned}
\end{align}
Now, we estimate $\|\nabla u(t-h)\|^2$. Replacing $t$ by $t-h$ in \eqref{4.11}, we immediately infer
\begin{align}\label{4.45}
\begin{aligned}
\|\nabla u(t-h)\|^2
\leq&
\alpha^{-2}(\lambda^{-1}+\alpha^2+2C_gC_6^{\frac12})e^{-\sigma(t-\tau-h)}||(u_\tau,\varphi)||^2_{E_V^2}\\
&+
\frac{1}{\beta \alpha^2}e^{-\sigma (t-h)} \int_\tau^t e^{\sigma s}||f(s)||^2_{V^{\prime}}\mathrm{d} s\\
\leq&
c e^{-\sigma(t-\tau-h)}||(u_\tau,\varphi)||^2_{E_V^2}
+c e^{-\sigma (t-h)} \int_\tau^t e^{\sigma s}||f(s)||^2_{V^{\prime}}\mathrm{d} s,
\end{aligned}
\end{align}
where $c >0$ is a constant that depends only on $\alpha$, $\lambda_1$, $C_g$ and $\nu$.
In view of $t-h \leq t$ and $e^{\sigma h} >1$, inequality \eqref{4.45} is improved to
\begin{align}\label{4.46}
\begin{aligned}
\|\nabla u(t-h)\|^2
\leq&
c e^{\sigma h} e^{-\sigma(t-\tau)}||(u_\tau,\varphi)||^2_{E_V^2}
+c e^{\sigma h} e^{-\sigma t} \int_\tau^t e^{\sigma s}||f(s)||^2_{V^{\prime}}\mathrm{d} s\\
\leq&
c e^{\sigma h} e^{-\sigma(t-\tau)}||(u_\tau,\varphi)||^2_{E_V^2}
+e^{\sigma h} \cdot \rho_\sigma(t).
\end{aligned}
\end{align}
%Therefore,
%\begin{align}\label{4.47}
%\|\nabla u(t-h)\|^2
%\leq
%c e^{\sigma h} e^{-\sigma(t-\tau)}||(u_\tau,\varphi)||^2_{E_V^2}
%+e^{\sigma h} \cdot \rho_\sigma(t).
%\end{align}
Using \eqref{4.12} and taking $2h$ in place of $h$, we get that for any $\tau \leq t-2h$,
\begin{align}\label{4.48}
\begin{aligned}
\int^t_{t-2h} \|\nabla u(s)\|^2 \mathrm{d} s
\leq&
\eta_1^{-1}\alpha^{-2}(\lambda^{-1}+\alpha^2+2C_gC_6^{\frac12})e^{-\sigma(t-\tau-2h)}||(u_\tau,\varphi)||^2_{E_V^2}\\
&+
\frac{1}{\beta \eta_1 \alpha^2}e^{-\sigma (t-h)} \int_\tau^t e^{\sigma s}||f(s)||^2_{V^{\prime}}\mathrm{d} s\\
\leq&
ce^{-\sigma(t-\tau-2h)}||(u_\tau,\varphi)||^2_{E_V^2}
+
c e^{2 h \sigma } e^{-\sigma t} \int_{-\infty}^t e^{\sigma s}||f(s)||^2_{V^{\prime}}\mathrm{d} s\\
\leq&
ce^{-\sigma(t-\tau-2h)}||(u_\tau,\varphi)||^2_{E_V^2}
+
 e^{2 h \sigma } \cdot \rho_\sigma(t).
\end{aligned}
\end{align}
From \eqref{4.44}-\eqref{4.48}, we conclude that for all $t-2h \geq \tau$ and all $(u_\tau, \varphi) \in E_V^2$,
%\begin{align}\label{4.44}
%\begin{aligned}
%\frac{4C_6-C_4+2}{4} \int_{t-h}^t \|\partial_s u(s)\|^2_V \mathrm{d}s
%\leq&
%\frac{\nu}{2} \cdot c e^{\sigma h} e^{-\sigma(t-\tau)}||(u_\tau,\varphi)||^2_{E_V^2}
%+\frac{\nu}{2} \cdot e^{\sigma h} \cdot \rho_\sigma(t)\\
%&+
%C_4(\lambda_1^{-1}+1)^2\cdot \big(ce^{-\sigma(t-\tau-2h)}||(u_\tau,\varphi)||^2_{E_V^2}
%+
% e^{2 h \sigma } \cdot \rho_\sigma(t)\big)\\
%&
%+2C_4 h(\lambda_1^{-1}+1)^2 \mathop{\max}_{s \in [t-2h, t]}\|\nabla u(s)\|^4 +\|f\|^2_{L^2([t-h, t; V^{\prime}])}\\
%\leq&
%c e^{\sigma h} e^{-\sigma(t-\tau)}||(u_\tau,\varphi)||^2_{E_V^2}
%+c e^{\sigma h} \cdot \rho_\sigma(t)\\
%&+
%c e^{2h \sigma} e^{-\sigma(t-\tau)}||(u_\tau,\varphi)||^2_{E_V^2}
%+c e^{2h \sigma} \cdot \rho_\sigma(t)\\
%&+
%2C_4 h(\lambda_1^{-1}+1)^2 \mathop{\max}_{s \in [t-2h, t]}\|\nabla u(s)\|^4 +\|f\|^2_{L^2([t-h, t; V^{\prime}])},
%\end{aligned}
%\end{align}
%i.e.,
\begin{align}
\begin{aligned}
\int_{t-h}^t \|\partial_s u(s)\|^2_V \mathrm{d}s
\leq&
c e^{\sigma h} e^{-\sigma(t-\tau)}||(u_\tau,\varphi)||^2_{E_V^2}
+
c e^{2h \sigma} e^{-\sigma(t-\tau)}||(u_\tau,\varphi)||^2_{E_V^2}\\
&
+c e^{2h \sigma} \cdot \rho_\sigma(t)
+
c \mathop{\max}_{s \in [t-2h, t]}\|\nabla u(s)\|^4 +c \|f\|^2_{L^2([t-h, t; V^{\prime}])}.
\end{aligned}
\end{align}
Hence, for all $t-2h \geq \tau$ and for any $(u_\tau, \varphi) \in E_V^2$, we deduce that
\begin{align}
\int_{t-h}^t \|\partial_s u(s)\|^2_V \mathrm{d}s
\leq
c e^{\sigma h} e^{-\sigma(t-\tau)}||(u_\tau,\varphi)||^2_{E_V^2}
+
c e^{2h \sigma} e^{-\sigma(t-\tau)}||(u_\tau,\varphi)||^2_{E_V^2}
+R^2_2(t),
\end{align}
where $R_2^2(t)$ has been given in \eqref{4.29}.
We finally end up with
\begin{align}\label{4.52}
\int_{t-h}^t \|\partial_s u(s)\|^2_V \mathrm{d}s
\leq
R^2_2(t)
\end{align}
as $e^{\sigma \tau} \rightarrow 0$ when $\tau \rightarrow - \infty$.
Consequently, it is clear to see from \eqref{4.34}, \eqref{4.52} and the definition of $\mathcal{D}$ that the family $\widehat{B}^{V}$ given by \eqref{4.27} is the family of pullback $\mathcal{D}_\sigma^{E^2_V}$-absorbing sets for the process $\{U(t, \tau)\}_{t \geq \tau}$.
\end{proof}

Next, we will use the contractive function method to verify the existence of pullback $\mathcal{D}_\sigma^{E^2_V}$-attractor for the $\{U(t, \tau)\}_{t \geq \tau}$ of problem \eqref{1.1}.
%The following lemma is similar to Theorem \ref{thm3.3} referred by \cite{CMD20}.

\begin{lem}\label{lem4.3}
Under the assumptions of Lemma \ref{lem4.2}, assume $\varphi \in E_V^2$ is given, if $\{u^n(t)\}_{n \in \mathbb{N}^{+}}$is a sequence of the solutions to problem \eqref{1.1} with initial data $u^n(\tau) \in E_V^2$, then there exists a subsequence of $\{u^n(t)\}_{s \in \mathbb{N}^{+}}$that convergence strongly in $L^2(\tau, T ; V)$.
\end{lem}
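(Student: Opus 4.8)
The plan is to obtain the compactness from the uniform a priori bounds already at hand, using crucially the extra regularity that the Voigt term $\alpha^2 A u_t$ gives to the time derivative. First, fix $T>\tau$ and observe that in the situation in which the lemma is applied --- solutions issued from the absorbing family $\widehat B^V$ of Lemma~\ref{lem4.2} --- the data $(u^n(\tau),\varphi)$ form a bounded sequence in $E_V^2$ with $\varphi$ common to all $n$. Hence $\|(u^n(\tau),\varphi)\|_{E_V^2}$ is bounded uniformly in $n$, so \eqref{4.11} yields a uniform bound for $\{u^n\}$ in $L^\infty(\tau,T;V)$; and repeating Step~2 of the proof of Lemma~\ref{lem4.2} --- testing \eqref{1.1}$_1$ with $\partial_t u^n$ and using the norm equivalence \eqref{4.39} --- yields a uniform bound for $\{\partial_t u^n\}$ in $L^2(\tau,T;V)$. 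This last bound is precisely where the Voigt regularization matters: the term $\alpha^2 A u_t$ forces $u_t$ to take values in $V$, not merely in $V'$ as for the Navier-Stokes equations, and this is what makes an Aubin-Lions-type argument work here. Thus $\{u^n\}$ is bounded in $L^\infty(\tau,T;V)\cap W^{1,2}(\tau,T;V)$.

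From the $W^{1,2}(\tau,T;V)$ bound one has $\|u^n(t)-u^n(s)\|\le C|t-s|^{1/2}$ for $\tau\le s\le t\le T$, so $\{u^n\}$ is equicontinuous from $[\tau,T]$ into $H$, while for each fixed $t$ the set $\{u^n(t)\}_n$ is bounded in $V$, hence precompact in $H$ since $V\hookrightarrow\hookrightarrow H$ compactly. By the Arzel\`{a}-Ascoli theorem (equivalently, by the Aubin-Lions-Simon lemma with $V\hookrightarrow\hookrightarrow H\hookrightarrow V'$) there is a subsequence converging strongly in $C([\tau,T];H)$, hence in $L^2(\tau,T;H)$; along a further subsequence, $u^n\rightharpoonup u$ weakly-$*$ in $L^\infty(\tau,T;V)$ and $\partial_t u^n\rightharpoonup\partial_t u$ weakly in $L^2(\tau,T;V)$. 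This strong convergence in $L^2(\tau,T;H)$ is already what is needed to run the contractive-function criterion of Lemma~\ref{lem2.1}: the functional $\psi_{t,T}(x,y)=c\,e^{-\sigma t}\int_{t-T}^t e^{\sigma s}\|u^x(s)-u^y(s)\|^2\,ds$, built from the $L^2(\Omega)$-norm $\|\cdot\|$, then belongs to $\mathcal C(\widehat B^V)$.

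To reach the convergence in the $V$-norm stated in the lemma, one further writes the energy identity of Lemma~\ref{lem4.1} for the difference $w^{mn}=u^m-u^n$; after the usual cancellation the convective contribution is $b(w^{mn},u^n,w^{mn})$, which is controlled by the three-dimensional interpolation inequality $|b(w^{mn},u^n,w^{mn})|\le C\|w^{mn}\|^{1/2}\|w^{mn}\|_V^{3/2}\|u^n\|_V\le\varepsilon\|w^{mn}\|_V^2+C_\varepsilon\|w^{mn}\|^2$, so that $\varepsilon\|w^{mn}\|_V^2$ is absorbed by the viscous term and only $\|w^{mn}\|^2$, already controlled by the previous step, remains; the delay difference is handled through \textbf{(H4)}, whose smallness conditions on $C_g$ and $\sigma$ are exactly what permit the resulting $\int e^{\sigma s}\|w^{mn}\|_V^2\,ds$ to be absorbed. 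I expect the genuine obstacle to be the term $c\,e^{\sigma\tau}\|w^{mn}(\tau)\|_V^2$ that survives this computation: because the data lie only in $E_V^2$, not in $E_{D(A)}^2$, the sequence $\{u^n(\tau)\}$ is precompact in $H$ but not in $V$, so this term is bounded but does not tend to $0$ as $m,n\to\infty$. It is not removed but kept as the additive constant $\tilde c$ of Lemma~\ref{lem2.1}, made small by taking the elapsed time $T=t-\tau$ large --- which is precisely why $T$ there is allowed to depend on $\tilde c$.
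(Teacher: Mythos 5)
Your reconstruction is necessarily your own, because the paper's proof of Lemma \ref{lem4.3} consists of the single sentence that the claim is ``a direct consequence of \cite[Theorem 3.3]{CMD20}'', with no argument supplied; there is therefore no technique in the paper to compare against. That said, the first two thirds of your argument are sound and identify the right mechanism: \eqref{4.11} gives $\{u^n\}$ bounded in $L^\infty(\tau,T;V)$, the computation of Step 2 of Lemma \ref{lem4.2} (which indeed hinges on the Voigt term $\alpha^2 Au_t$) gives $\{\partial_t u^n\}$ bounded in $L^2(\tau,T;V)$, and Aubin--Lions/Arzel\`a--Ascoli then yields a subsequence converging strongly in $C([\tau,T];H)$, hence in $L^2(\tau,T;H)$.

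The gap is exactly the one you flag yourself, and it is fatal for the statement as written: the lemma asserts strong convergence in $L^2(\tau,T;V)$ on a \emph{fixed} interval, while your energy estimate for $w^{mn}=u^m-u^n$ leaves the term $\alpha^2\|\nabla w^{mn}(\tau)\|^2$, which is only bounded, not small, since $\{u^n(\tau)\}$ is precompact in $H$ but not in $V$; on a fixed interval the factor $e^{-\sigma(t-\tau)}$ provides no decay, so what you actually prove is $L^2(\tau,T;H)$ convergence plus a difference estimate with a non-vanishing remainder. Declaring that remainder to be the constant $\tilde c$ of Lemma \ref{lem2.1} and letting $T=t-\tau$ grow proves (a version of) Lemma \ref{lem4.4}, not Lemma \ref{lem4.3}. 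The obstruction is not an artifact of your method: for the linear Voigt equation $(I+\alpha^2A)\partial_t u=-\nu Au$ the solution operator is a bounded invertible group on $V$, so solutions issued from data that are merely bounded in $V$ cannot in general admit a subsequence converging strongly in $L^2(\tau,T;V)$; some extra input (compactness of the data in $V$, or a $D(A)$-bound) is indispensable. Your closing observation does, however, point to a correct repair of the overall scheme: if one re-derives \eqref{4.58} via the interpolation bound $|b(\tilde u,u^2,\tilde u)|\le\varepsilon\|\tilde u\|_V^2+C_\varepsilon\|u^2\|_V^4\|\tilde u\|^2$ and builds the contractive function in \eqref{4.69} from $\int_\tau^t e^{\sigma s}\|\tilde u(s)\|^2\,\mathrm{d}s$ rather than from $\int_\tau^t e^{\sigma s}\|\nabla\tilde u(s)\|^2\|\nabla u^2(s)\|^2\,\mathrm{d}s$, then the $L^2(\tau,T;H)$ compactness you did establish is all that is needed, and the $V$-version of Lemma \ref{lem4.3} can be dispensed with.
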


\begin{proof}
It is a direct consequence of \cite[Theorem 3.3]{CMD20} and we omit details here.
%and one can refer to
%\cite[Theorem 3.3]{CMD20}
%\cite{CMD20} for proof details.
\end{proof}

Now, we will establish the pullback $\mathcal{D}_\sigma^{E^2_V}$-asymptotic compactness for the process $\{U(t, \tau)\}_{t \geq \tau}$ of problem \eqref{1.1}.

\begin{lem}\label{lem4.4}
Under the assumptions of Lemma \ref{lem4.2}. Then the process $\{U(t, \tau)\}_{t \geq \tau}$ is pullback $\mathcal{D}_\sigma^{E^2_V}$-asymptotically compact in $E_V^2$.
\end{lem}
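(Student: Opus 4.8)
\emph{Plan.} The plan is to invoke Lemma~\ref{lem2.1} with the pullback $\mathcal{D}_\sigma^{E^2_V}$-absorbing family $\widehat{B}^V$ of Lemma~\ref{lem4.2}: fixing $t\in\mathbb{R}$ and an arbitrary $\tilde c>0$, I must produce $T=T(t,\widehat{B}^V,\tilde c)$ and a function $\psi_{t,T}\in\mathcal{C}(\widehat{B}^V)$ such that $\|U(t,t-T)x-U(t,t-T)y\|_{E_V^2}\le\tilde c+\psi_{t,T}(x,y)$ for all $x,y\in B^V(t-T)$. Write $\tau=t-T$ (with $T>h$, to be chosen), let $u=u(\cdot;\tau,x)$, $v=v(\cdot;\tau,y)$ be the solutions issuing from two data in $B^V(\tau)$, and set $w=u-v$. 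Then $w$ solves $\frac{d}{ds}\bigl(w+\alpha^2Aw\bigr)+\nu Aw+B(u)-B(v)=g(s,u_s)-g(s,v_s)$ with $(w(\tau),w_\tau)=x-y$ in $E_V^2$, and the object to be estimated is $\|U(t,\tau)x-U(t,\tau)y\|_{E_V^2}^2=\|\nabla w(t)\|^2+\int_{t-h}^t\|\nabla w(s)\|^2\,ds$.

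\emph{Energy estimate for the difference.} First I would carry out, for $w$ and with the forcing removed, the computation of Lemma~\ref{lem4.1}: test the difference equation with $w$, use $\langle B(u)-B(v),w\rangle=b(w,v,w)$ (which follows from \eqref{2.3}), multiply by $e^{\sigma s}$ and integrate over $[\tau,t]$. The trilinear term I would bound by \eqref{2.2} together with $\sup_{s\in[\tau,t]}\|\nabla v(s)\|\le M$, valid for all $y\in B^V(\tau)$ by virtue of \eqref{4.11}; this produces the contribution $2C_1M\int_\tau^t e^{\sigma s}\|\nabla w(s)\|^2\,ds$. The delay term I would treat via the continuous embedding $(L^2(\Omega))^3\hookrightarrow V'$, Young's inequality and hypothesis~\textbf{(H4)}, splitting the integral occurring in \textbf{(H4)} into its part over $[\tau-h,\tau]$ and its part over $[\tau,t]$: the first, together with the boundary term $e^{\sigma\tau}\|\nabla w(\tau)\|^2$, is bounded by $Ce^{\sigma\tau}\|x-y\|_{E_V^2}^2$; the second, together with the $\sigma$-commutator term $\sigma(\lambda_1^{-1}+\alpha^2)\int_\tau^t e^{\sigma s}\|\nabla w\|^2\,ds$, is absorbed into the dissipation $2\nu\int_\tau^t e^{\sigma s}\|\nabla w\|^2\,ds$ precisely because of the smallness of $\sigma$ in \eqref{4.10}. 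Dividing by $e^{\sigma t}$, discarding favourable terms and using $\int_{t-h}^t\le\int_\tau^t$ (legitimate as $T>h$), I expect to arrive at
\begin{align*}
\|\nabla w(t)\|^2+\int_{t-h}^t\|\nabla w(s)\|^2\,ds\ \le\ Ce^{-\sigma T}\|x-y\|_{E_V^2}^2\ +\ C_T\int_\tau^t\|\nabla w(s)\|^2\,ds,
\end{align*}
with $C$ absolute and $C_T$ depending on $t$ and $T$ through $M$.

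\emph{Choice of $T$ and of the contractive function.} Since $x,y\in B^V(\tau)$ forces $\|x-y\|_{E_V^2}^2\le 4R_1^2(\tau)$, the first term above is at most $4Ce^{-\sigma T}R_1^2(t-T)$, which by \eqref{4.28} and \eqref{4.30} equals $C'e^{-\sigma t}\int_{-\infty}^{t-T}e^{\sigma r}\|f(r)\|_{V'}^2\,dr$ and hence tends to $0$ as $T\to+\infty$ thanks to \eqref{4.26}; so $T$ may be fixed large enough that this term is $\le\tilde c^2$. The remaining term then dictates
\begin{align*}
\psi_{t,T}(x,y):=\Bigl(C_T\int_{t-T}^t\|u(s;t-T,x)-u(s;t-T,y)\|_V^2\,ds\Bigr)^{1/2},
\end{align*}
and $\sqrt{a+b}\le\sqrt a+\sqrt b$ yields $\|U(t,t-T)x-U(t,t-T)y\|_{E_V^2}\le\tilde c+\psi_{t,T}(x,y)$ on $B^V(t-T)$. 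Finally $\psi_{t,T}\in\mathcal{C}(\widehat{B}^V)$: for any $\{x_n\}\subset B^V(t-T)$, Lemma~\ref{lem4.3} furnishes a subsequence of the solutions $u(\cdot;t-T,x_n)$ that converges strongly in $L^2(t-T,t;V)$, hence is Cauchy there, so $\lim_{k\to\infty}\lim_{l\to\infty}\psi_{t,T}(x_{n_k},x_{n_l})=0$ as required in Definition~\ref{defn2.6}. Lemma~\ref{lem2.1} then gives the desired pullback $\mathcal{D}_\sigma^{E^2_V}$-asymptotic compactness of $\{U(t,\tau)\}_{t\ge\tau}$ in $E_V^2$.

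\emph{Where the difficulty lies.} The one point needing care is the bookkeeping for the delay: one must make sure that the genuinely non-local contribution of \textbf{(H4)} over $[\tau-h,\tau]$ (which involves the difference of the two histories) is swept into the exponentially small constant $\tilde c$ via the weight $e^{\sigma\tau}$, while only its $[\tau,t]$-portion is matched against the dissipation. It is also worth stressing that the trilinear term, estimated crudely through \eqref{2.2}, carries the non-small factor $\|v\|_V\le M$ and therefore \emph{cannot} be absorbed into $2\nu\int_\tau^t e^{\sigma s}\|\nabla w\|^2\,ds$; it is harmless only because Lemma~\ref{lem4.3} provides strong convergence of the solutions in $L^2(\tau,t;V)$ — exactly the norm appearing in $\psi_{t,T}$ — and not merely in $L^2(\tau,t;H)$, which is precisely why the contractive-function method, rather than the norm-plus-weak-convergence argument, is the natural tool here.
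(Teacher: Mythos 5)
Your proposal is correct and follows essentially the same route as the paper: an exponentially weighted energy estimate for the difference of two solutions, with the history part of \textbf{(H4)} swept into the exponentially decaying term and the $[\tau,t]$ part absorbed into the dissipation, the trilinear contribution isolated as the contractive function, contractivity verified via the strong $L^2(\tau,T;V)$ convergence of Lemma~\ref{lem4.3}, and Lemma~\ref{lem2.1} applied to conclude. The only (cosmetic) differences are that you pull $\sup_s\|\nabla v(s)\|\le M$ out of the integral before defining $\psi_{t,T}$, whereas the paper keeps $\|\nabla u^2(s)\|^2$ inside the integrand of \eqref{4.69}, and that you are more explicit about passing from squared norms to norms and about the choice of $T$.
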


\begin{proof}
Suppose $(u^j(t), u_t^j)$ is a weak solution of problem \eqref{1.1}  corresponding to the initial value $u_\tau^j$, $\varphi^j(\theta, x) \in \widehat{D}^{V}$ ($j$=1,2), and assume $\tilde{u}=u^1-u^2$, $\tilde{\varphi}=\varphi^1-\varphi^2$, then we obtain
\begin{align}\label{4.53}
\begin{aligned}
\partial_t \tilde{u}(t)-\alpha^2 \triangle \partial_t \tilde{u}(t)-\nu \triangle \tilde{u}(t)+B(u^1(t), u^1(t))-B(u^2(t), u^2(t))=g(t, u_t^1(t))-g(t, u_t^2(t)),
\end{aligned}
\end{align}
with initial data
\begin{align}\label{4.54}
\tilde{u}^j(\tau+\theta, x)=\varphi^j(\theta, x), \enskip \theta \in [-h, 0], \, x \in \Omega.
\end{align}
Choosing $\tilde{u} \in V$ as a text function of \eqref{4.53}, we arrive at
\begin{align}\label{4.55}
\begin{aligned}
&
\frac12\frac{\mathrm{d}}{\mathrm{d}t}(\|\tilde{u}(t)\|^2+\alpha^2\|\nabla \tilde{u}(t)\|^2)+\nu \|\nabla \tilde{u}(t)\|^2+ \langle B(u^1(t))-B(u^2(t)), \tilde{u}(t)\rangle_{V^{\prime} \times V}\\
& \quad =
\langle g(t, u_t^1)- g(t, u_t^2), \tilde{u}(t) \rangle_{V^{\prime} \times V}.
\end{aligned}
\end{align}
Multiplying \eqref{4.55} by $e^{\sigma t}$,
%and using the Poincar\'{e} inequality, we have
%\begin{align}\label{4.56}
%\begin{aligned}
%&
%\frac{\mathrm{d}}{\mathrm{d}t}(e^{\sigma t}\|\tilde{u}(t)\|^2+\alpha^2e^{\sigma t}\|\nabla \tilde{u}(t)\|^2)+2\nu e^{\sigma t}\|\nabla \tilde{u}(t)\|^2+ 2e^{\sigma t}\langle B(u^1(t))-B(u^2(t)), \tilde{u}(t)\rangle_{V^{\prime} \times V}\\
%&=
%\sigma e^{\sigma t}\|\tilde{u}(t)\|^2+\alpha^2 e^{\sigma t}\|\nabla \tilde{u}(t)\|^2
%+2e^{\sigma t}\langle g(t, u_t^1)- g(t, u_t^2), \tilde{u}(t) \rangle_{V^{\prime} \times V}\\
%&\leq
%(\lambda_1^{-1}\sigma+\alpha^2 \sigma)e^{\sigma t} \|\nabla \tilde{u}(t)\|^2
%+2e^{\sigma t}\langle g(t, u_t^1)- g(t, u_t^2), \tilde{u}(t) \rangle_{V^{\prime} \times V},
%\end{aligned}
%\end{align}
we can estimate
\begin{align}\label{4.57}
\begin{aligned}
&
\frac{\mathrm{d}}{\mathrm{d}t}(e^{\sigma t}\|\tilde{u}(t)\|^2+\alpha^2e^{\sigma t}\|\nabla \tilde{u}(t)\|^2)\\
& \quad \leq
(\lambda_1^{-1}\sigma+\alpha^2 \sigma-2 \nu)e^{\sigma t} \|\nabla \tilde{u}(t)\|^2
+2e^{\sigma t} |\langle g(t, u_t^1)- g(t, u_t^2), \tilde{u}(t) \rangle_{V^{\prime} \times V}|\\
& \quad \quad +
2 e^{\sigma t}|\langle B(u^1(t))-B(u^2(t)), \tilde{u}(t)\rangle_{V^{\prime} \times V}|,
\end{aligned}
\end{align}
where $\sigma$ is defined by \eqref{4.10}.
Thanks to \eqref{2.2}-\eqref{2.3}, we have
\begin{align}\label{4.58}
\begin{aligned}
|\langle B(u^1(t))-B(u^2(t)), \tilde{u}(t)\rangle_{V^{\prime} \times V}|
%&=
%|\langle b(\tilde{u}(t), u^2(t), \tilde{u}(t))\rangle|\\
%&\leq
%C_1 \|\tilde{u}(t)\|_V \|u^2(t)\|_V \|\tilde{u}(t)\|_V\\
&\leq
C_1 (\lambda_1^{-1}+1)^3\|\nabla\tilde{u}(t)\|^2 \cdot \|\nabla u^2(t)\| .
\end{aligned}
\end{align}
By the compact imbedding of $L^2(\Omega) \hookrightarrow H^{-1}(\Omega)$ and Young inequality, we conclude
\begin{align}\label{4.59}
\begin{aligned}
&
 |\langle g(t, u_t^1)- g(t, u_t^2), \tilde{u}(t) \rangle_{V^{\prime} \times V}|\\
%&\leq
%C_7 \|g(t, u_t^1)- g(t, u_t^2)\|_{V^{\prime} } \cdot (\lambda_1^{-1}+1)\|\nabla \tilde{u}(t)\|\\
& \quad \leq
\frac{(\beta+4C_g)(\lambda_1^{-1}+1)}{2}\|\nabla \tilde{u}(t)\|^2+\frac{C_6(\lambda_1^{-1}+1)}{2(\beta+4C_g)}\|g(t, u_t^1)- g(t, u_t^2)\|^2.
\end{aligned}
\end{align}
Combining \eqref{4.57}-\eqref{4.59}, we get
\begin{align}\label{4.60}
\begin{aligned}
&
\frac{\mathrm{d}}{\mathrm{d}t}(e^{\sigma t}\|\tilde{u}(t)\|^2+\alpha^2e^{\sigma t}\|\nabla \tilde{u}(t)\|^2)\\
& \quad \leq
(\lambda_1^{-1}\sigma+\alpha^2 \sigma-2 \nu+(\beta+4C_g)(\lambda_1^{-1}+1))e^{\sigma t} \|\nabla \tilde{u}(t)\|^2\\
& \quad \quad +
2C_1(\lambda_1^{-1}+1)^3 e^{\sigma t} \|\nabla \tilde{u}(t)\|^2 \cdot \|\nabla u^2(t)\|^2
+\frac{C_6(\lambda_1^{-1}+1)}{\beta+4C_g} e^{\sigma t} \|g(t, u_t^1)- g(t, u_t^2)\|^2.
\end{aligned}
\end{align}
Set
\begin{align}\label{4.63}
\eta_1 > \eta_2=\alpha^{-2}\big(2 \nu-\lambda_1^{-1}\sigma-\alpha^2 \sigma-(\beta+4C_g)(\lambda_1^{-1}+1)
-\frac{C_g^2C_6(\lambda_1^{-1}+1)^2}{\beta+4C_g}\big) >0.
\end{align}
Integrating \eqref{4.60} in the interval $[\tau, t]$, and using the assumptions \text{(H1)-(H4)}, we infer that
\begin{align}\label{4.64}
\begin{aligned}
&
\|\nabla \tilde{u}(t)\|^2+\eta_2 e^{-\sigma t} \int_\tau^t e^{\sigma s} \|\nabla \tilde{u}(s)\|^2 \mathrm{d} s\\
& \quad \leq
(1+\lambda_1^{-1}\alpha^{-2}+\frac{C_g^2C_6(\lambda_1^{-1}+1)}{ \alpha^2 (\beta+4C_g)})) e^{-\sigma (t-\tau)} \|(\tilde u_\tau, \tilde{\varphi})\|^2_{E_V^2}\\
& \quad \quad +
2C_1\alpha^{-2}(\lambda_1^{-1}+1)^3  e^{-\sigma t}\int_\tau^t e^{\sigma s} \|\nabla \tilde{u}(s)\|^2 \cdot \|\nabla u^2(s)\|^2 \mathrm{d} s,
\end{aligned}
\end{align}
where $0< \eta_2 < \eta_1$, and $\eta_1$ and $\beta$ are same as those in \eqref{4.13}-\eqref{4.14}.
Similarly to \eqref{4.22}, for any $t-h \geq \tau$,
%we derive
%\begin{align}\label{4.22}
%\int_\tau^t e^{\sigma s}\|\nabla \tilde{u}(s)\|^2\mathrm{d}s \geq
%\int_{t-h}^t e^{\sigma s}\|\nabla \tilde{u}(s)\|^2\mathrm{d}s
%\geq
%e^{\sigma(t-h)}\int_{t-h}^t e^{\sigma s}\|\nabla \tilde{u}(s)\|^2\mathrm{d}s.
%\end{align}
inequality \eqref{4.64} becomes
%\begin{align}\label{4.66}
%\begin{aligned}
%&
%e^{\sigma(t-h)} \cdot \eta_2 e^{- \sigma t}\int_{t-h}^t e^{\sigma s}\|\nabla u(s)\|^2\mathrm{d}s=\eta_2 e^{-\sigma h}\int_{t-h}^t\|\nabla u(s)\|^2 \mathrm{d}s\\
%&\leq
%(1+\lambda_1^{-1}\alpha^{-2}+\frac{C_g^2C_7(\lambda_1^{-1}+1)}{ \alpha^2 (\beta+4C_g)})) e^{-\sigma (t-\tau)} \|(\tilde u_\tau, \tilde{\varphi})\|^2_{E_V^2}\\
%&+
%2C_1\alpha^{-2}(\lambda_1^{-1}+1)^3  e^{-\sigma t}\int_\tau^t e^{\sigma s} \|\nabla \tilde{u}(s)\|^2 \cdot \|\nabla u^2(s)\|^2 \mathrm{d} s,
%\end{aligned}
%\end{align}
%i.e.,
\begin{align}\label{4.67}
\begin{aligned}
\int_{t-h}^t \|\nabla \tilde{u}(s)\|^2 \mathrm{d}s
&\leq
\eta_2^{-1}(1+\lambda_1^{-1}\alpha^{-2}+\frac{C_g^2C_6(\lambda_1^{-1}+1)}{ \alpha^2 (\beta+4C_g)})) e^{-\sigma (t-\tau-h)} \|(\tilde u_\tau, \tilde{\varphi})\|^2_{E_V^2}\\
& \quad +
2C_1 \eta_2^{-1} \alpha^{-2}(\lambda_1^{-1}+1)^3  e^{-\sigma (t-h)}\int_\tau^t e^{\sigma s} \|\nabla \tilde{u}(s)\|^2 \cdot \|\nabla u^2(s)\|^2 \mathrm{d} s.
\end{aligned}
\end{align}
Combining \eqref{4.64} and \eqref{4.67}, we arrive at
\begin{align}\label{4.68}
\begin{aligned}
&
\|U(t, \tau(u_\tau^1, \varphi^1))-U(t, \tau(u_\tau^2, \varphi^2))\|^2_{E_V^2}
=
\|\nabla \tilde{u}(t)\|^2+\int_{t-h}^t \|\nabla \tilde{u}(s)\|^2 \mathrm{d}s\\
& \quad \leq
(1+\eta_2^{-1})(1+\lambda_1^{-1}\alpha^{-2}+\frac{C_g^2C_6(\lambda_1^{-1}+1)}{ \alpha^2 (\beta+4C_g)})) e^{-\sigma (t-\tau-h)} \|(\tilde u_\tau, \tilde{\varphi})\|^2_{E_V^2}\\
& \quad \quad +
2C_1 \alpha^{-2} (1+\eta_2^{-1}) (\lambda_1^{-1}+1)^3  e^{-\sigma (t-h)}\int_\tau^t e^{\sigma s} \|\nabla \tilde{u}(s)\|^2 \cdot \|\nabla u^2(s)\|^2 \mathrm{d} s.
\end{aligned}
\end{align}
Furthermore, let $T=t-\tau$ and
\begin{align}\label{4.69}
\psi_{t, T}\big((u^1, u_t^1), (u^2, u_t^2)\big)=2C_1 \alpha^{-2} (1+\eta_2^{-1}) (\lambda_1^{-1}+1)^3  e^{-\sigma (t-h)}\int_\tau^t e^{\sigma s} \|\nabla \tilde{u}(s)\|^2 \cdot \|\nabla u^2(s)\|^2 \mathrm{d} s.
\end{align}
Then by Definition  \ref{defn2.6}, Lemmas \ref{lem2.1}, \ref{lem4.1}, \ref{lem4.2} and \ref{lem4.3}, it follows that $\psi_{t, T}\big((u^1, u_t^1), (u^2, u_t^2)\big)$ is a contractive function. For any constant $C_8>0$, taking
$$
\tau=t-h-\frac{1}{\sigma}\frac{(1+\eta_2^{-1})(\lambda_1^{-1}\alpha^{-2}+1+\frac{C_g^2C_6(\lambda_1^{-1}+1)}{ \alpha^2 (\beta+4C_g)}))  \|(\tilde u_\tau, \tilde{\varphi})\|^2_{E_V^2}}{C_8},
$$
then it can be seen that the process $\{U(t, \tau)\}_{t \geq \tau}$ of problem \eqref{1.1} is pullback $\mathcal{D}_\sigma^{E^2_V}$-asymptotically compact in $E_V^2$.
\end{proof}

From the above proofs, it follows the following theorem about the pullback $\mathcal{D}_\sigma^{E^2_V}$-attractors, which is one of the main results of this paper.

\begin{thm}\label{thm4.1}
Under the assumptions of Lemma \ref{lem4.2}, assume that the function $f(x, t)$ satisfies \eqref{4.26}, then there exists a unique pullback $\mathcal{D}_\sigma^{E^2_V}$-attractor $\mathcal{A}_\sigma^{E^2_V}=\{A_\sigma^{E^2_V}(t): t \in \mathbb{R}\}$ of problem $(1.1)$ in $E^2_V$.
\end{thm}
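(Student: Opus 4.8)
The plan is to assemble the abstract attractor existence theorem from the pieces established in Section 3. First I would invoke Lemma \ref{lem3.1}, which gives that $\{U(t,\tau)\}_{t\ge\tau}$ defined by \eqref{4.9} is a continuous process on $E_V^2$; this provides the underlying dynamical structure. Next, Lemma \ref{lem4.2} supplies a pullback $\mathcal{D}_\sigma^{E_V^2}$-absorbing family $\widehat B^V=\{B^V(t):t\in\mathbb R\}$ under the integrability condition \eqref{4.26} on $f$, so hypothesis (i) of the standard abstract theorem on existence of pullback attractors is met. Then Lemma \ref{lem4.4} gives that the process is pullback $\mathcal{D}_\sigma^{E_V^2}$-asymptotically compact in $E_V^2$, which is hypothesis (ii).

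Having both a pullback absorbing family and pullback asymptotic compactness, I would appeal to the general existence result for pullback attractors (e.g.\ the by-now classical construction in \cite{CLR06, MR09}): define, for each $t\in\mathbb R$,
\begin{align*}
A_\sigma^{E_V^2}(t)=\overline{\bigcup_{\widehat D^V\in\mathcal D_\sigma^{E_V^2}}\Lambda(\widehat D^V,t)}^{\,E_V^2},
\qquad
\Lambda(\widehat D^V,t)=\bigcap_{s\le t}\overline{\bigcup_{\tau\le s}U(t,\tau)D^V(\tau)}^{\,E_V^2},
\end{align*}
the $\omega$-limit set of $\widehat D^V$ at time $t$. The absorbing property localizes all the relevant dynamics inside $\widehat B^V$, so $A_\sigma^{E_V^2}(t)=\Lambda(\widehat B^V,t)$; asymptotic compactness then forces each $A_\sigma^{E_V^2}(t)$ to be nonempty and compact in $E_V^2$ (property (i) of Definition \ref{defn2.7}), the $\omega$-limit characterization yields pullback $\mathcal D_\sigma^{E_V^2}$-attraction (property (ii)), and the process property $U(t,s)U(s,\tau)=U(t,\tau)$ together with continuity of $U(t,s)$ in its spatial argument gives invariance $U(t,\tau)A_\sigma^{E_V^2}(\tau)=A_\sigma^{E_V^2}(t)$ (property (iii)). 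Uniqueness follows from the minimality of the $\omega$-limit construction: any other pullback $\mathcal D_\sigma^{E_V^2}$-attractor must both contain (by invariance and attraction) and be contained in (by attraction and compactness) $\mathcal A_\sigma^{E_V^2}$.

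I do not expect a genuine obstacle here, since the theorem is essentially a bookkeeping corollary: all the analytic work has already been done in Lemmas \ref{lem4.2} and \ref{lem4.4}. The one point requiring a little care is the compatibility of the universe $\mathcal D_\sigma^{E_V^2}$ from Definition \ref{defn4.1} with the abstract framework — specifically that $\mathcal D_\sigma^{E_V^2}$ is inclusion-closed and that $\widehat B^V\in\mathcal D_\sigma^{E_V^2}$ (verified in the opening lines of the proof of Lemma \ref{lem4.2} via $\lim_{t\to-\infty}e^{\sigma t}R_1^2(t)=0$), which is exactly what makes the $\omega$-limit set construction over $\mathcal D_\sigma^{E_V^2}$ coincide with $\Lambda(\widehat B^V,\cdot)$. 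With that observed, the statement of Theorem \ref{thm4.1} follows directly, and I would simply cite the abstract theorem rather than reprove it.
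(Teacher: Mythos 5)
Your proposal is correct and follows essentially the same route as the paper: the paper's proof is a one-line assembly citing the continuity of the process, the pullback $\mathcal{D}_\sigma^{E_V^2}$-absorbing family from Lemma \ref{lem4.2}, and the pullback asymptotic compactness from Lemma \ref{lem4.4}, then invoking the standard abstract existence theorem. Your version simply spells out the $\omega$-limit set construction and the uniqueness argument that the paper leaves implicit.
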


\begin{proof}
From Definitions \ref{defn2.7} and \ref{defn4.1}, Theorem \ref{thm2.1}, Lemmas \ref{lem4.1}-\ref{lem4.4}, it follows the existence and uniqueness of the above pullback $\mathcal{D}_\sigma^{E^2_V}$-attractor $\mathcal{A}_\sigma^{E^2_V}$.
\end{proof}

\section{Regularity of pullback attractors}

\quad \,  In the section, as in \cite{CP09, WZL18, ZS15}, we shall establish the regularity of pullback attractors for non-autonomous system \eqref{1.1}.

\begin{lem}\label{lem6.1}
Under the assumptions of Theorem \ref{thm4.1}, then $\mathcal{D}_\sigma^{E_V^2}$ is bounded in $E^2_{D(A)}$.
\end{lem}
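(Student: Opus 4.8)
The plan is to upgrade the $E_V^2$ estimates of Section 3 to $E_{D(A)}^2$ estimates by testing the equation \eqref{1.1}$_1$ with $Au$ rather than with $u$, exploiting the higher regularity guaranteed by Theorem \ref{thm2.2}, and then showing that the resulting bound produces an $E_{D(A)}^2$-absorbing family whose existence forces the whole attractor $\mathcal{A}_\sigma^{E_V^2}$ to lie in a fixed bounded subset of $E_{D(A)}^2$. First I would take the inner product of \eqref{1.1}$_1$ with $Au(t)$ in $H$, producing
\begin{align*}
\tfrac12\tfrac{\mathrm d}{\mathrm dt}\big(\|\nabla u(t)\|^2+\alpha^2\|Au(t)\|^2\big)+\nu\|Au(t)\|^2
=-(B(u(t)),Au(t))+(f(t),Au(t))+(g(t,u_t),Au(t)),
\end{align*}
multiply by $e^{\sigma t}$, and integrate on $[\tau,t]$, exactly paralleling the proof of Lemma \ref{lem4.1}. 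The three terms on the right are then controlled: the nonlinear term via \eqref{2.7} and Young's inequality, $|(B(u),Au)|\le C_5\|Au\|^{3/2}\|u\|_V^{3/2}\le \tfrac{\nu}{4}\|Au\|^2+c\|\nabla u\|^6$; the forcing term by $|(f,Au)|\le \tfrac{\nu}{8}\|Au\|^2+c\|f\|^2$ (now needing $f\in L^2_{loc}(\mathbb R;H)$ as in Theorem \ref{thm2.2}); and the delay term by $|(g(t,u_t),Au)|\le \tfrac{\nu}{8}\|Au\|^2+c\|g(t,u_t)\|^2$ together with hypothesis \textbf{(H4)} to absorb $\int e^{\sigma s}\|g(s,u_s)\|^2\,\mathrm ds$ into $c\int_{\tau-h}^t e^{\sigma s}\|u(s)\|_V^2\,\mathrm ds$, which by the already-established Lemma \ref{lem4.1}/\ref{lem4.2} is bounded in terms of the $E_V^2$-data and $\int_{-\infty}^t e^{\sigma s}\|f(s)\|^2\,\mathrm ds$.

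The cubic-in-$\|\nabla u\|^2$ term $c\|\nabla u\|^6$ is the genuine obstacle and must be handled by a Gronwall-type / uniform Gronwall argument: since Lemma \ref{lem4.2} already gives $\int_{t-h}^t\|\nabla u(s)\|^2\,\mathrm ds\le R_1^2(t)$ (and indeed $\|\nabla u(t)\|^2$ is bounded pointwise by the right side of \eqref{4.11}), one controls $\int_{t-h}^t\|\nabla u(s)\|^6\,\mathrm ds\le \big(\sup_{s\in[t-h,t]}\|\nabla u(s)\|^4\big)\int_{t-h}^t\|\nabla u(s)\|^2\,\mathrm ds$, which is a finite quantity depending only on $t$ and the universe $\mathcal{D}_\sigma^{E_V^2}$; feeding this into the uniform Gronwall lemma on the differential inequality for $\|\nabla u(t)\|^2+\alpha^2\|Au(t)\|^2$ yields, for $t-\tau$ large, a bound of the form $\|Au(t)\|^2\le \widetilde R(t)$ with $\widetilde R(t)$ independent of the particular $\widehat D^V\in\mathcal{D}_\sigma^{E_V^2}$ and of $(u_\tau,\varphi)$. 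Combined with the analogue of \eqref{4.12} for the $\|A\varphi\|^2$ part of the norm (integrate the same inequality over $[t-h,t]$), this furnishes a bounded pullback $\mathcal{D}_\sigma^{E_V^2}$-absorbing set $\widehat B^{D(A)}=\{B^{D(A)}(t)\}$ in $E_{D(A)}^2$.

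Finally, since $\mathcal{A}_\sigma^{E_V^2}$ is invariant, $\mathcal{A}_\sigma^{E_V^2}(t)=U(t,\tau)\mathcal{A}_\sigma^{E_V^2}(\tau)$ for every $\tau\le t$; the attractor being itself a member (or sub-member) of $\mathcal{D}_\sigma^{E_V^2}$, for $\tau$ sufficiently negative $U(t,\tau)\mathcal{A}_\sigma^{E_V^2}(\tau)\subset B^{D(A)}(t)$, hence $\mathcal{A}_\sigma^{E_V^2}(t)\subset B^{D(A)}(t)$, i.e. $\mathcal{A}_\sigma^{E_V^2}(t)$ is bounded in $E_{D(A)}^2$, uniformly in the sense that $B^{D(A)}(t)$ is a fixed bounded set for each $t$. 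I would remark that the only price paid relative to Section 3 is the stronger regularity of the forcing, $f\in L^2_{loc}(\mathbb R;H)$ satisfying $\int_{-\infty}^t e^{\sigma s}\|f(s)\|^2\,\mathrm ds<\infty$, and of the initial data, which is exactly the setting of Theorem \ref{thm2.2}; the bootstrap closes because the dangerous nonlinear term is already tamed by the a priori $E_V^2$-bounds. The main obstacle, to reiterate, is making the cubic term in the energy inequality compatible with a Gronwall estimate without circularity — resolved by using the pointwise and time-integrated $\|\nabla u\|$ bounds of Lemmas \ref{lem4.1}--\ref{lem4.2} as \emph{inputs} rather than trying to close everything in one inequality.
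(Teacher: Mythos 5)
There is a genuine gap here, and it is precisely the obstruction that forces the paper to argue by decomposition rather than by a direct energy estimate. Your plan is to test \eqref{1.1}$_1$ with $Au$ and close the inequality for $y(t)=\|\nabla u(t)\|^2+\alpha^2\|Au(t)\|^2$ by the uniform Gronwall lemma. But the Voigt term $-\alpha^2\Delta u_t$ removes all parabolic smoothing: the $V$-level estimate of Lemma \ref{lem4.1} only produces the dissipation $\nu\int\|\nabla u(s)\|^2\,\mathrm ds$, which is one derivative short of the quantity $\int_{t-h}^{t}\|Au(s)\|^2\,\mathrm ds$ (equivalently $\int y$) that the uniform Gronwall lemma needs as input at the next level of the hierarchy. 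Equivalently, integrating your differential inequality from $\tau$ to $t$ leaves $\alpha^2\|Au(\tau)\|^2$ on the right-hand side, and this is $+\infty$ for generic data in the universe $\mathcal D_\sigma^{E_V^2}$; since the equation does not regularize, $\|Au(t)\|$ never becomes finite along the evolution. The closing invariance argument does not repair this: the absorbing family $\widehat B^{D(A)}$ you construct only absorbs $E^2_{D(A)}$-data, so the inclusion $U(t,\tau)\mathcal A_\sigma^{E_V^2}(\tau)\subset B^{D(A)}(t)$ is not justified for an attractor whose sections are a priori only subsets of $E_V^2$. A secondary issue is the forcing: the bound $|(f,Au)|\le\tfrac{\nu}{8}\|Au\|^2+c\|f\|^2$ requires $f(t)\in H$, whereas Theorem \ref{thm4.1} only assumes $f\in L^2_{loc}(\mathbb R;V')$ with \eqref{4.26}; you acknowledge this but it changes the hypotheses of the lemma.

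The paper's proof is designed to dodge exactly these two obstructions. It approximates $f$ by $f^\theta\in L^2_{loc}(\mathbb R;H)$ and splits $U(t,\tau)(u_\tau,\varphi)=(v(t),0)+(w(t),u_t)$ via \eqref{6.79}--\eqref{6.80}, where the component $w$ carries the regular forcing $f^\theta+g(t,u_t)$ and starts from the \emph{zero} initial datum $w(\tau)=0$ (hence trivially lies in $D(A)$ and can legitimately be estimated in $E^2_{D(A)}$), while the component $v$ carries the rough initial datum and the small remainder $f-f^\theta$ and is driven to zero in the pullback limit. Boundedness of the attractor in $E^2_{D(A)}$ then follows because the attractor is approximated, in the Hausdorff semidistance \eqref{6.157}, by the bounded set in which $U_2$ lands. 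The missing idea in your proposal is this assignment of zero initial data to the regular component; without it (or without strengthening the hypotheses to $(u_\tau,\varphi)\in E^2_{D(A)}$ and $f\in L^2_{loc}(\mathbb R;H)$, which proves a different statement), the direct estimate cannot be initialized.
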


\begin{proof}
Since $L^2(\Omega) \hookrightarrow H^{-1}(\Omega)$ is dense (see \cite{A75, E98}), for any $f(x, t) \in L_{l o c}^2(\mathbb{R} ; V^{\prime})$, there exists a function $f^\theta(x, t) \in L_{l o c}^2(\mathbb{R} ; H)$ such that
\begin{align}\label{6.77}
\|f-f^\theta\| < \xi,
\end{align}
where $\xi > 0$ is a constant.

Fixed $\tau \in \mathbb{R}$ and suppose $(u_\tau, \varphi) \in \mathcal{D}_\sigma^{E_V^2}$, then we can decompose the solution $U(t, \tau)(u_\tau, \varphi)=(u(t), u_t)$ into the sum
\begin{align}\label{6.78}
U(t, \tau)(u_\tau, \varphi)=U_1(t, \tau)(u_\tau, \varphi)+U_2(t, \tau)(u_\tau, \varphi),
\end{align}
with $U_1(t, \tau)(u_\tau, \varphi)=(v(t), 0)$ and $U_2(t, \tau)(u_\tau, \varphi)=(w(t), u_t)$ satisfying the following equations, respectively,
\begin{align}\label{6.79}
\begin{cases}
\partial_t v-\alpha^2\partial_t \Delta v-\nu \Delta v+(v \cdot \nabla)v+\nabla p=f(t)-f^{\theta}(t) & \text { in } (\tau, +\infty) \times  \Omega, \\
\nabla \cdot v=0 & \text { in } (\tau, +\infty) \times  \Omega, \\
v(\tau, x)=u(\tau, x)=u_\tau & \text { in }\Omega, \\
v(\tau+t, x)=0, &  \text{ in } (-h, 0) \times \Omega,
\end{cases}
\end{align}
and
\begin{align}\label{6.80}
\begin{cases}
\partial_t w-\alpha^2\partial_t \Delta w-\nu \Delta w+(w \cdot \nabla)w+\nabla p= g(t, u_t)+f^{\theta}(t) & \text { in } (\tau, +\infty) \times  \Omega, \\
\nabla \cdot w=0 & \text { in } (\tau, +\infty) \times  \Omega, \\
w(\tau, x)=0 & \text { in }\Omega, \\
w(\tau+t, x)=\varphi(t, x), &  \text{ in } (-h, 0) \times \Omega.
\end{cases}
\end{align}
Multiplying \eqref{6.79}$_1$ by $Av \in H$, integrating it in $\Omega$ and multiplying both sides by $e^{\sigma t}$, we obtain
\begin{align}\label{6.81}
\frac{\mathrm{d}}{\mathrm{d}t} (\|\nabla v(t)\|^2+\alpha^2 \|A v(t)\|^2)
\leq
-2\nu\|Av(t)\|^2+2|b(v(t), v(t), Av(t))|+2|(f(t)-f^\theta(t), Av(t))|.
\end{align}
%Multiplying \eqref{6.81} by $e^{\sigma t}$ and using the Poincar\'{e} inequality, we have
%\begin{align}\label{6.82}
%\begin{aligned}
%\frac{\mathrm{d}}{\mathrm{d}t} (e^{\sigma t}\|\nabla v(t)\|^2+\alpha^2 e^{\sigma t} \|A v(t)\|^2)
%\leq&
%(\lambda_1^{-1}\sigma+\alpha^2\sigma-2\nu) e^{\sigma t} \|Av(t)\|^2 \\
%&+2 e^{\sigma t}|b(v(t), v(t), Av(t))|+
%2 e^{\sigma t} |(f(t)-f^\theta(t), Av(t))|.
%\end{aligned}
%\end{align}
Besides, from \eqref{2.4}, the Poincar\'{e} and Young inequalities, we can estimate
\begin{align}\label{6.83}
\begin{aligned}
2|b(v(t), v(t), Av(t))|
\leq
4C_2 (\lambda_1^{-1}+1)^3\|\nabla v(t)\|^6+\frac{3C_2}{8}\|Av(t)\|^2,
\end{aligned}
\end{align}
and
\begin{align}\label{6.84}
\begin{aligned}
2|(f(t)-f^\theta(t), Av(t))|
\leq
\frac{\xi^2}{(\beta+4C_g)(\lambda_1^{-1}+1)}
+(\beta+4C_g)(\lambda_1^{-1}+1)\|Av(t)\|^2.
\end{aligned}
\end{align}
Similar to Lemma \ref{lem5.2}, we have
%\begin{align}\label{6.146}
%\begin{aligned}
%&
%\|Av(t)\|^2 + \eta_5\int_\tau^t e^{\sigma s} \|Av(s)\|^2 \mathrm{d}s\\
%&\leq
%({\lambda_1^{-1}\alpha^{-2}+1})e^{-\sigma (t-\tau)}\|(u_\tau, \varphi)\|^2_{E_{D(A)}^2}
%+\frac{\xi^2}{\alpha^2(\beta+4C_g)(\lambda_1^{-1}+1)}\\
%&+
%4C_2\alpha^{-2}(\lambda_1^{-2}+\lambda_1^{-1})^3\mathop{\max}_{s \in [\tau, t]}\|Av(s)\|^6,
%\end{aligned}
%\end{align}
%and
\begin{align}\label{6.147}
\begin{aligned}
\int_{t-h}^t \|Av(s)\|^2 \mathrm{d}s
&\leq
\eta_5^{-1}({\lambda_1^{-1}\alpha^{-2}+1})e^{-\sigma (t-\tau-h)}\|(u_\tau, \varphi)\|^2_{E_{D(A)}^2}
+\frac{\xi^2e^{\sigma h}}{\eta_5\alpha^2(\beta+4C_g)(\lambda_1^{-1}+1)}\\
& \quad +
4C_2\eta_5^{-1}\alpha^{-2}(\lambda_1^{-2}+\lambda_1^{-1})^3\mathop{\max}_{s \in [\tau, t]}\|Av(s)\|^6,
\end{aligned}
\end{align}
where
$$
\eta_1 >  \eta_5=\alpha^{-2}(2\nu -\alpha^2\sigma-\lambda_1^{-1}\sigma-(\beta+4C_g)(\lambda_1^{-1}+1)-\frac{3C_2}{8}) > 0.
$$
%We notice that
%\begin{align}
%\|U_1(t, \tau)(u_\tau, \varphi)\|^2_{E_{D(A)}^2}
%=\|(v(t), 0)\|^2_{E_{D(A)}^2}=\|Av(t)\|^2.
%\end{align}
From \eqref{6.147}, we know that
\begin{align}\label{6.148}
\begin{aligned}
&
\|U_1(t, \tau)(u_\tau, \varphi)\|^2_{E_{D(A)}^2}
=\|(v(t), 0)\|^2_{E_{D(A)}^2}=\|Av(t)\|^2\\
&\leq
({\lambda_1^{-1}\alpha^{-2}+1})e^{-\sigma (t-\tau)}\|(u_\tau, \varphi)\|^2_{E_{D(A)}^2}
+\frac{\xi^2}{\alpha^2(\beta+4C_g)(\lambda_1^{-1}+1)}\\
& \quad +
4C_2\alpha^{-2}(\lambda_1^{-2}+\lambda_1^{-1})^3\mathop{\max}_{s \in [\tau, t]}\|Av(s)\|^6.
\end{aligned}
\end{align}
In the same way, multiplying \eqref{6.80}$_1$ by $Aw \in H$, integrating the resultant in $\Omega$ and multiplying both side by $e^{\sigma t}$, we conclude
%\begin{align}\label{6.149}
%\begin{aligned}
%&
%\frac{\mathrm{d}}{\mathrm{d}t}(\|\nabla w(t)\|^2+\alpha^2\|A w(t)\|^2) +2\nu \|A w(t)\|^2\\
%&\leq
%2|b(w(t), w(t), Aw(t))|+2|(f^\theta(t), Aw(t))|+2|(g(t, u_t), Aw(t))|.
%\end{aligned}
%\end{align}
%Multiplying \eqref{6.149} by $e^{\sigma t}$, using the Poincar\'{e} inequality
%and then integrating in the interval $[\tau, t]$, we get
\begin{align}\label{6.150}
\begin{aligned}
& \frac{\mathrm{d}}{\mathrm{d}t}( e^{\sigma t}\|\nabla w(t)\|^2+\alpha^2 e^{\sigma t} \|A w(t)\|^2)\\
& \quad \leq
(\alpha^2\sigma+\sigma\lambda_1^{-1}-2\nu)e^{\sigma t}\|Aw(t)\|^2+2e^{\sigma t}|b(w(t), w(t), Aw(t))|\\
& \quad \quad +
2e^{\sigma t}|(f^\theta(t), Aw(t))|+2e^{\sigma t}|(g(t, u_t), Aw(t))|.
\end{aligned}
\end{align}
By simple calculations, we arrive at
\begin{align}\label{6.151}
\begin{aligned}
2|b(w(t), w(t), Aw(t))|
\leq
\frac{3C_2}{8}\|Aw(t)\|^2
+
2C_2 (\lambda_1^{-1}+1)^3\|\nabla w(t)\|^6,
\end{aligned}
\end{align}
\begin{align}\label{6.152}
\begin{aligned}
2|(f^\theta(t), Aw(t))|
\leq
\frac14\|Aw(t)\|^2
+4\|f^\theta(t)\|^2_{V^{\prime}},
\end{aligned}
\end{align}
and
\begin{align}\label{6.153}
\begin{aligned}
2|(g(t, u_t), Aw(t))|
\leq
(\beta+4C_g)(\lambda_1^{-1}+1)\|Aw(t)\|^2
+\frac{1}{(\beta+4C_g)(\lambda_1^{-1}+1)}\|g(t, u_t)\|^2.
\end{aligned}
\end{align}
Eventually, inserting \eqref{6.151}-\eqref{6.153} to \eqref{6.150}, we end up with
%\begin{align}\label{6.154}
%\begin{aligned}
%&
%\|Aw(t)\|^2 + \eta_6\int_\tau^t e^{\sigma s} \|Aw(s)\|^2 \mathrm{d}s\\
%&\leq
%({\lambda_1^{-1}\alpha^{-2}+1}+\frac{1}{\alpha^1(\beta+4C_g)(\lambda_1+1)})e^{-\sigma (t-\tau)}\|(u_\tau, \varphi)\|^2_{E_{D(A)}^2}\\
%&+
%2\alpha^{-2}e^{-\sigma t}\int_\tau^t e^{\sigma s} \|f^\theta(s)\|^2_{V^{\prime}} \mathrm{d}s
%+
%4C_2\alpha^{-2}(\lambda_1^{-2}+\lambda_1^{-1})^3\mathop{\max}_{s \in [\tau, t]}\|Aw(s)\|^6,
%\end{aligned}
%\end{align}
%and
\begin{align}\label{6.155}
\begin{aligned}
\int_{t-h}^t \|Aw(s)\|^2 \mathrm{d}s
&\leq
\eta_6^{-1}({\lambda_1^{-1}\alpha^{-2}+1}+\frac{1}{\alpha^1(\beta+4C_g)(\lambda_1+1)})e^{-\sigma (t-\tau)}\|(u_\tau, \varphi)\|^2_{E_{D(A)}^2}\\
& \quad +
2\eta_6^{-1}\alpha^{-2}e^{-\sigma t}\int_\tau^t e^{\sigma s} \|f^\theta(s)\|^2_{V^{\prime}} \mathrm{d}s\\
& \quad +
4C_2\eta_6^{-1}\alpha^{-2}(\lambda_1^{-2}+\lambda_1^{-1})^3\mathop{\max}_{s \in [\tau, t]}\|Aw(s)\|^6,
\end{aligned}
\end{align}
where
$$
\eta_1 >  \eta_6=\alpha^{-2}(2\nu -\alpha^2\sigma-\lambda_1^{-1}\sigma-(\beta+4C_g)(\lambda_1^{-1}+1)-\frac{3C_2+2}{8}-\frac{\lambda_1^{-1}}{\beta+4C_g}) > 0.
$$
From \eqref{6.77}, \eqref{6.155} and \eqref{6.155}, we derive that
\begin{align}\label{6.156}
\begin{aligned}
&
\|U_2(t, \tau)(u_\tau, \varphi)\|^2_{E_{D(A)}^2}
=\|Aw(t)\|^2+\int_{t-h}^t \|Aw(s)\|^2 \mathrm{d} s\\
%&\leq
%(1+\eta_6^{-1}e^{\sigma h})({\lambda_1^{-1}\alpha^{-2}+1}+\frac{1}{\alpha^1(\beta+4C_g)(\lambda_1+1)})e^{-\sigma (t-\tau)}\|(u_\tau, \varphi)\|^2_{E_{D(A)}^2}\\
%&+
%2(1+\eta_6^{-1}e^{\sigma h})\alpha^{-2}e^{-\sigma t}\int_\tau^t e^{\sigma s} \|f^\theta(s)\|^2_{V^{\prime}} \mathrm{d}s\\
%&+
%4C_2(1+\eta_6^{-1}e^{\sigma h})\alpha^{-2}(\lambda_1^{-2}+\lambda_1^{-1})^3\mathop{\max}_{s \in [\tau, t]}\|Aw(s)\|^6\\
& \quad \leq
(1+\eta_6^{-1}e^{\sigma h})({\lambda_1^{-1}\alpha^{-2}+1}+\frac{1}{\alpha^1(\beta+4C_g)(\lambda_1+1)})e^{-\sigma (t-\tau)}\|(u_\tau, \varphi)\|^2_{E_{D(A)}^2}+c \rho_\sigma^{\prime}(t)\\
& \quad \leq
R_5(t),
\end{aligned}
\end{align}
where $R_5(t)=c \rho_\sigma^{\prime}(t)$.
It follows from \eqref{6.148} and \eqref{6.156} that for any $t \in \mathbb{R}$,
%we obtain
\begin{align}\label{6.157}
\textrm{dist}_{E^2_{D(A)}}(\mathcal{D}_\sigma^{E_V^2}, \overline{\mathcal{B}}_{E^2_{D(A)}}(R_5))
\leq
c e^{-\sigma(t-\tau)} \rightarrow 0, \enskip \text{as} \, \tau \rightarrow -\infty.
\end{align}
Consequently, \eqref{6.157} implies the pullback $\mathcal{D}_\sigma^{E_V^2}$-attractors $\mathcal{A}_\sigma^{E^2_V}$ is bounded in $E_{D(A)}^2$.
\end{proof}

\section{Invariant measure on the pullback attractors}

\quad \,  The aim of this section is to employ the theory of {\L}ukaszewicz and Robinson \cite{LR14} to prove the unique existence of invariant measure on the pullback $\mathcal{D}_\sigma^{E_V^2}$-attractor $\mathcal{A}_\sigma^{E^2_V}$.
We first cite the definition of generalized Banach limits.

\begin{defn}(\cite{FMRT01, LR14}).
A generalized Banach limit is any linear functional, which we denote by $\operatorname{LIM}_{T \rightarrow \infty}$, defined on the space of all bounded real-valued functions on $[0,+\infty)$ that satisfies\\
(i) $\operatorname{LIM}_{T \rightarrow \infty} \zeta(T) \geq 0$ for nonnegative functions $\zeta$;\\
(ii) $\operatorname{LIM}_{T \rightarrow \infty} \zeta(T)=\lim \limits_{T \rightarrow \infty} \zeta(T)$ if the usual limit $\lim \limits_{T \rightarrow \infty} \zeta(T)$ exists.
\end{defn}

\begin{rem}
Notice that we consider the ``pullback" asymptotic behavior and we require generalized limits as $\tau \rightarrow-\infty$. For a given real-valued function $\zeta$ defined on $(-\infty, 0]$ and a given Banach limit $\operatorname{LIM}_{t \rightarrow+\infty}$, we define $\operatorname{LIM}_{t \rightarrow-\infty} \zeta(t)=$ $\operatorname{LIM}_{t \rightarrow+\infty} \zeta(-t)$.
\end{rem}

%\begin{defn} (\cite{LR14}).
%A process $\{\mathcal{U}(t, \tau)\}_{t \geq \tau}$ is said to be $\tau$-continuous on a metric space $E^2_V$ if for every $(u_\tau, \varphi) \in E_V^2$ and every $t \in \boldsymbol{R}$, the $E_V^2$-valued function $(u_\tau, \varphi) \longmapsto U(t, \tau)(u_\tau, \varphi)$ is continuous and bounded on $(-\infty, t]$.
%\end{defn}

%The following result was proved by {\L}ukaszewicz and Robinson in \cite{LR14}¡£

%\begin{lem}(\cite{LR14}).
%Let $\{\mathcal{U}(t, \tau)\}_{t \geq \tau}$ be a $\tau$-continuous evolutionary process in a complete metric space $X$ that has a pullback $\mathcal{D}$-attractor $\mathcal{A}(\cdot)$. Fix a generalized Banach limit $\operatorname{LIM}_{T \rightarrow \infty}$ and let $\psi: \boldsymbol{R} \longmapsto X$ be a continuous map such that $\psi(\cdot) \in \mathcal{D}$. Then there exists a unique family of Borel probability measures $\left\{\mu_t\right\}_{t \in \boldsymbol{R}}$ in $X$ such the support of the measure $\mu_t$ is contained in $\mathcal{A}(t)$ and
%$$
%\operatorname{LIM}_{\tau \rightarrow-\infty} \frac{1}{t-\tau} \int_\tau^t \phi(\mathcal{U}(t, s) \psi(s)) \mathrm{d} s=\int_{\mathcal{A}(t)} \phi(v) \mathrm{d} \mu_t(v)
%$$
%for any real-valued continuous functional $\phi$ on $X$. In addition, $\mu_t$ is invariant in the sense that
%$$
%\int_{\mathcal{A}(t)} \phi(v) \mathrm{d} \mu_t(v)=\int_{\mathcal{A}(\tau)} \phi(\mathcal{U}(t, \tau) v) \mathrm{d} \mu_\tau(v), \quad t \geq \tau .
%$$
%\end{lem}

\begin{lem}\label{lem5.1}
Assume $f \in L_{l o c}^2(\mathbb{R} ; V^{\prime})$ satisfying \eqref{4.26}, $(u_\tau, \varphi) \in E_V^2$ and $g: \mathbb{R} \times C_V \rightarrow L^2(\Omega)$ satisfies \text{(H1)-(H4)}. Then for every $(u_\tau, \varphi) \in E_V^2$ and every $t \in \mathbb{R}$ the $E_V^2$-valued function $\tau \longmapsto U(t, \tau)(u_\tau, \varphi)$ is bounded on $(-\infty, t]$.
\end{lem}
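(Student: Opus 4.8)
The plan is to show that for fixed $t\in\mathbb{R}$ and fixed $(u_\tau,\varphi)\in E_V^2$, the quantity $\|U(t,\tau)(u_\tau,\varphi)\|_{E_V^2}^2$ stays bounded as $\tau$ ranges over $(-\infty,t]$. First I would invoke the a priori estimates already established in Lemma \ref{lem4.1}, namely \eqref{4.11} and \eqref{4.12}, which give
\begin{align*}
\|\nabla u(t)\|^2 \leq \alpha^{-2}(\lambda_1^{-1}+\alpha^2+2C_gC_6^{1/2})e^{-\sigma(t-\tau)}\|(u_\tau,\varphi)\|_{E_V^2}^2+\frac{1}{\beta\alpha^2}e^{-\sigma t}\int_\tau^t e^{\sigma s}\|f(s)\|_{V'}^2\,\mathrm{d}s,
\end{align*}
together with the analogous bound for $\int_{t-h}^t\|\nabla u(s)\|^2\,\mathrm{d}s$. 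Since $\|U(t,\tau)(u_\tau,\varphi)\|_{E_V^2}^2=\|\nabla u(t)\|^2+\int_{t-h}^t\|\nabla u(s)\|^2\,\mathrm{d}s$ (as in \eqref{4.31}), adding the two estimates yields a bound of the form
\begin{align*}
\|U(t,\tau)(u_\tau,\varphi)\|_{E_V^2}^2 \leq c\,(1+\eta_1^{-1}e^{\sigma h})\,e^{-\sigma(t-\tau)}\|(u_\tau,\varphi)\|_{E_V^2}^2 + c\,(1+\eta_1^{-1}e^{\sigma h})\,e^{-\sigma t}\int_\tau^t e^{\sigma s}\|f(s)\|_{V'}^2\,\mathrm{d}s.
\end{align*}

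Next I would bound each term uniformly in $\tau\le t$. For the first term: since $t-\tau\ge 0$ we have $e^{-\sigma(t-\tau)}\le 1$, so it is dominated by $c(1+\eta_1^{-1}e^{\sigma h})\|(u_\tau,\varphi)\|_{E_V^2}^2$, a constant independent of $\tau$ (here $(u_\tau,\varphi)$ is the fixed initial datum). For the second term, monotonicity of the integral gives $e^{-\sigma t}\int_\tau^t e^{\sigma s}\|f(s)\|_{V'}^2\,\mathrm{d}s \le e^{-\sigma t}\int_{-\infty}^t e^{\sigma s}\|f(s)\|_{V'}^2\,\mathrm{d}s$, which is finite by the standing hypothesis \eqref{4.26} and is exactly $\rho_\sigma(t)/c$ up to the constant. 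Hence
\begin{align*}
\sup_{\tau\le t}\|U(t,\tau)(u_\tau,\varphi)\|_{E_V^2}^2 \leq c\,(1+\eta_1^{-1}e^{\sigma h})\Big(\|(u_\tau,\varphi)\|_{E_V^2}^2 + e^{-\sigma t}\int_{-\infty}^t e^{\sigma s}\|f(s)\|_{V'}^2\,\mathrm{d}s\Big) < +\infty,
\end{align*}
which is the desired boundedness on $(-\infty,t]$.

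There is essentially no hard obstacle here; the lemma is a routine consequence of the dissipative energy estimate of Lemma \ref{lem4.1} combined with the integrability assumption \eqref{4.26} on $f$. The only points requiring a little care are: (i) making sure one is using the bounds with the \emph{lower} limit $\tau$ of integration replaced by $-\infty$, which is legitimate because the integrand is nonnegative; and (ii) checking that the constant $\eta_1>0$ and the parameters $\beta,\sigma$ from \eqref{4.13}--\eqref{4.14} do not depend on $\tau$ (they depend only on $\nu,\alpha,\lambda_1,C_g,C_6$), so the constant $c$ in the final estimate is genuinely uniform. I would also remark that the same argument, applied with $t$ replaced by $t-h$, controls $\|\nabla u(t-h)\|^2$, and hence the whole $E_V^2$-norm of the segment $(u(t),u_t)$ stays bounded — this is what is recorded in \eqref{4.34} of Lemma \ref{lem4.2} — so the statement can in fact be read off directly from the pullback absorbing set construction already carried out.
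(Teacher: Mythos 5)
Your argument is correct and coincides with the paper's own proof: the authors likewise take the estimate \eqref{94} from Step 1 of Lemma \ref{lem4.2}, bound $e^{-\sigma(t-\tau)}\le 1$ for $\tau\le t$, extend the force integral to $-\infty$ using \eqref{4.26}, and conclude the bound \eqref{93} uniform in $\tau$. Your added remarks on the $\tau$-independence of the constants are a harmless elaboration of the same computation.
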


\begin{proof}
Let $(u_\tau, \varphi) \longmapsto U(t, \tau)(u_\tau, \varphi)$ and $t \in \mathbb{R}$ be given.
%Easily notice \eqref{4.26} is equivalent to (cf \cite[Remark 5]{MMR11})
%\begin{align}
%\int_{-\infty}^t e^{\sigma s}\|f(s)\|^2_{V^{\prime}} \mathrm{~d} s<+\infty \quad \text { for all } t \in \mathbb{R} .
%\end{align}
Then \eqref{94} yields to
%implies that for all $\tau \in(-\infty, t]$,
\begin{align}\label{93}
\begin{aligned}
\|U(t,\tau)(u_\tau, \varphi)\|^2_{E_V^2}
%\leq&
%\alpha^{-2}(\lambda^{-1}+\alpha^2+2C_gC_7^{\frac12})e^{-\sigma(t-\tau)}||(u_\tau,\varphi)||^2_{E_V^2}
%,\\
%&+
%\eta_1^{-1}\alpha^{-2}(\lambda^{-1}+\alpha^2+2C_gC_7^{\frac12})e^{-\sigma(t-\tau-h)}||(u_\tau,\varphi)||^2_{E_V^2}\\
%&+
%\frac{1}{\beta \alpha^2}e^{-\sigma t} \int_\tau^t e^{\sigma s}||f(s)||^2_{V^{\prime}}\mathrm{d} s
%+
%\frac{1}{\beta \eta_1 \alpha^2}e^{-\sigma (t-h)} \int_{- \infty}^t e^{\sigma s}||f(s)||^2_{V^{\prime}}\mathrm{d} s,\\
%\leq&
%ce^{-\sigma(t-\tau)}||(u_\tau,\varphi)||^2_{E_V^2}+ce^{-\sigma t} \int_{- \infty}^t e^{\sigma s}||f(s)||^2_{V^{\prime}}\mathrm{d} s,\\
%&+
%c\eta_1^{-1}e^{-\sigma(t-\tau-h)}||(u_\tau,\varphi)||^2_{E_V^2}
%+
%c \eta_1^{-1}e^{-\sigma (t-h)} \int_{- \infty}^t e^{\sigma s}||f(s)||^2_{V^{\prime}}\mathrm{d} s,\\
\leq&
c(1+\eta_1^{-1}e^{\sigma h}) \big(||(u_\tau,\varphi)||^2_{E_V^2}
+ e^{-\sigma t} \int_{- \infty}^t e^{\sigma s}||f(s)||^2_{V^{\prime}}\mathrm{d} s\big), \enskip \tau \in(-\infty, t],\\
%\leq&
%c(1+\eta_1^{-1}e^{\sigma h}) e^{-\sigma(t-\tau)}||(u_\tau,\varphi)||^2_{E_V^2}+R_1^2(t),
\end{aligned}
\end{align}
where $c> 0$ is a constant that depends only on $\alpha$, $\lambda_1$, $C_g$ and $\nu$. Thus, we can infer that the right-hand side of \eqref{93} gives the boundedness of the $E_V^2$-valued function $(\tau \longmapsto U(t, \tau)(u_\tau, \varphi)$. %The proof is completed.
%Integrating inequality \eqref{93} gives the boundedness of the $E_V^2$-valued function $\tau \longmapsto$ $U(t, \tau) u_0$ on $(-\infty, t]$.
%In the sequel, we set a positive number $C(t, (u_\tau, \varphi))$ such that
%$$
%C(t, (u_\tau, \varphi))
%:=
%c(1+\eta_1^{-1}e^{\sigma h}) \big(||(u_\tau,\varphi)||^2_{E_V^2}
%+ e^{-\sigma t} \int_{- \infty}^t e^{\sigma s}||f(s)||^2_{V^{\prime}}\mathrm{d} s\big).
%$$
%Then $M^2(t, \phi)$ is a continuous function with respect to $t \in \mathbb{R}$.
\end{proof}

%Next we will establish the continuity of the $E_V^2$-valued function $(u_\tau, \varphi) \longmapsto U(t, \tau)(u_\tau, \varphi)$ on $(-\infty, t]$.

\begin{lem}\label{lem100}
Let $u_\tau, v_\tau \in V$, and $u(t)=u(t ; \tau, u_\tau, \varphi), v(t)=v(t ; \tau, v_\tau, \varphi)$ be the corresponding solutions to problem \eqref{1.1}. Then for the all $t \in[0, T]$, the following equality holds
there exists some positive constant $M(c_1, c_2, N)$ such that for any $t>\tau$,
\begin{align}
\|\nabla u(t)-\nabla v(t)\|
\leq
\alpha^{-2}(v+C_1 \max _{s \in[0, T]} (\|\nabla u(s) \|+ \|\nabla v(s) \| )+2 C_g) \int_0^t\|\nabla \tilde{u}(s)\| d s,
\enskip \forall t \in[0, T].
\end{align}
\end{lem}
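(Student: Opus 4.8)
The plan is to estimate $\nabla\tilde u(t)$, where $\tilde u=u-v$, by working directly with the difference equation rather than with the usual $e^{\sigma t}$-weighted energy identity, since here we want a Gr\"onwall-type bound on a finite interval $[0,T]$ with the same delay datum $\varphi$. Subtracting the two copies of \eqref{91} gives
\begin{align*}
\frac{d}{dt}\big(\tilde u(t)+\alpha^2 A\tilde u(t)\big)+\nu A\tilde u(t)+B(u(t))-B(v(t))=g(t,u_t)-g(t,v_t)\quad\text{in }V',
\end{align*}
with $\tilde u(\tau+\theta)=\varphi(\theta)-\varphi(\theta)=0$ for $\theta\in[-h,0]$, so in particular $\tilde u(\tau)=0$. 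I would then pair this identity with $A\tilde u(t)\in V'$ (equivalently, test against $\tilde u$ in the $V$-inner product), which produces
\begin{align*}
\frac12\frac{d}{dt}\big(\|\nabla\tilde u\|^2+\alpha^2\|A\tilde u\|_{?}^2\big)+\nu\|A\tilde u\|^2 \;\le\; \text{(trilinear term)}+\text{(delay term)};
\end{align*}
alternatively — and this is what matches the stated conclusion with its $\alpha^{-2}$ prefactor and single power of $\|\nabla\tilde u\|$ — one tests instead against $\tilde u$ itself and keeps the $H_0^1$-norm, writing $\frac{d}{dt}\big(\tfrac12\|\tilde u\|^2+\tfrac{\alpha^2}{2}\|\nabla\tilde u\|^2\big)+\nu\|\nabla\tilde u\|^2=-\,b(u,u,\tilde u)+b(v,v,\tilde u)+\langle g(t,u_t)-g(t,v_t),\tilde u\rangle$.

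Next I would bound each term on the right. For the trilinear part, use the cancellation \eqref{2.3} to write $b(u,u,\tilde u)-b(v,v,\tilde u)=b(\tilde u,u,\tilde u)+b(v,\tilde u,\tilde u)=b(\tilde u,u,\tilde u)$, and then invoke \eqref{2.2} together with Poincar\'e to get $|b(\tilde u,u,\tilde u)|\le C_1\|\nabla\tilde u\|^2\|\nabla u\|\le C_1\big(\max_{[0,T]}\|\nabla u\|\big)\|\nabla\tilde u\|^2$; symmetrically one could also split it to bring in $\max(\|\nabla u\|+\|\nabla v\|)$ as in the statement. For the delay term, \textbf{(H3)} gives $\|g(t,u_t)-g(t,v_t)\|\le L_g\|\tilde u_t\|_{C_V}$, and since $\tilde u$ vanishes on $[\tau-h,\tau]$ one has $\|\tilde u_t\|_{C_V}\le \sup_{s\in[\tau,t]}\|\nabla\tilde u(s)\|$; combined with Cauchy--Schwarz and Poincar\'e this contributes a term of order $C_g\sup_{[\,\cdot\,]}\|\nabla\tilde u\|\cdot\|\nabla\tilde u(t)\|$. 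Dropping the nonnegative $\nu\|\nabla\tilde u\|^2$ and the $\|\tilde u\|^2$ term, one arrives at
\begin{align*}
\frac{\alpha^2}{2}\frac{d}{dt}\|\nabla\tilde u(t)\|^2\;\le\;\big(C_1\max_{s\in[0,T]}(\|\nabla u(s)\|+\|\nabla v(s)\|)+2C_g\big)\,\Big(\sup_{s\in[0,t]}\|\nabla\tilde u(s)\|\Big)\,\|\nabla\tilde u(t)\|,
\end{align*}
so that $\frac{d}{dt}\|\nabla\tilde u(t)\|\le \alpha^{-2}\big(\nu+C_1\max_{[0,T]}(\|\nabla u\|+\|\nabla v\|)+2C_g\big)\sup_{s\le t}\|\nabla\tilde u(s)\|$ after also absorbing the viscous contribution (the $\nu$ in the claimed constant). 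Integrating from $0$ to $t$ and using $\|\nabla\tilde u(\tau)\|=0$ (so the boundary term vanishes), together with the elementary observation that for a nonnegative function the integral of the running supremum equals the integral of the function when the latter is nondecreasing — or more cleanly, replacing $\sup_{s\le t}\|\nabla\tilde u(s)\|$ by $\|\nabla\tilde u\|$ under the integral after noting the right-hand side already controls the sup — yields exactly
\begin{align*}
\|\nabla u(t)-\nabla v(t)\|\le \alpha^{-2}\Big(\nu+C_1\max_{s\in[0,T]}\big(\|\nabla u(s)\|+\|\nabla v(s)\|\big)+2C_g\Big)\int_0^t\|\nabla\tilde u(s)\|\,ds.
\end{align*}

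The main obstacle is handling the delay/supremum term cleanly: the estimate $\|\tilde u_t\|_{C_V}\le\sup_{s\in[\tau,t]}\|\nabla\tilde u(s)\|$ introduces a running supremum rather than a pointwise value, so one cannot directly close a standard differential Gr\"onwall inequality. The honest way around this is to set $m(t):=\sup_{s\in[0,t]}\|\nabla\tilde u(s)\|$, note $m$ is nondecreasing and derive $\frac{d}{dt}\|\nabla\tilde u(t)\|\le K\,m(t)$ with $K=\alpha^{-2}(\nu+C_1\max(\|\nabla u\|+\|\nabla v\|)+2C_g)$, integrate to get $\|\nabla\tilde u(t)\|\le K\int_0^t m(s)\,ds$, take the supremum over $[0,t]$ on the left to obtain $m(t)\le K\int_0^t m(s)\,ds$, and then Gr\"onwall forces $m\equiv 0$ on $[0,T]$ — in other words the inequality in the statement holds trivially with both sides zero, which is consistent with uniqueness of solutions (Theorem \ref{thm2.1}) when the two initial delay profiles coincide; if instead $u_\tau\ne v_\tau$ one keeps the nonzero boundary term $\|\nabla(u_\tau-v_\tau)\|$ and the same argument gives the Gr\"onwall-type bound. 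I would also double-check the exact form of the constant (whether the coefficient of the trilinear term should be $C_1$ or $2C_1$, depending on how $b(\tilde u,u,\tilde u)$ versus a symmetrized splitting is used) and make sure the translation between the time variable $t\in[0,T]$ used in the statement and the interval $[\tau,\tau+T]$ in \eqref{1.1} is applied consistently.
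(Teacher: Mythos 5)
Your proposal follows essentially the same route as the paper's proof: form the difference equation, test against $\tilde u$, bound the convective difference by $C_1(\|\nabla u\|+\|\nabla v\|)\|\nabla \tilde u\|$ via \eqref{2.2}, control the delay term by the Lipschitz hypotheses, and close with a Gr\"onwall-type argument. Two points of comparison. First, the paper treats the delay term with \text{(H4)} rather than \text{(H3)}: after Cauchy--Schwarz in time it obtains $2C_g\int_0^t\|\nabla\tilde u(s)\|^2\,ds$ directly (see \eqref{88}), using that $\tilde u$ vanishes on the delay interval; this sidesteps the running-supremum complication you flag, so your $m(t)$ device, while workable, is not needed. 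Second, and more substantively, your diagnosis of the initial term is exactly right and applies verbatim to the paper's own proof: the paper also asserts $\tilde u(0,x)=0$ in $\Omega$ and omits the term $\|u_\tau-v_\tau\|^2+\alpha^2\|\nabla(u_\tau-v_\tau)\|^2$ from the energy identity \eqref{90}, even though the lemma explicitly allows $u_\tau\neq v_\tau$. As you observe, with that term dropped the inequality forces $\tilde u\equiv 0$ and the stated bound holds only vacuously; the meaningful statement must retain $\|\nabla(u_\tau-v_\tau)\|$ on the right-hand side, which is in fact what the continuity argument of Lemma \ref{lem5.2} requires. Your version, which keeps this boundary term, is the corrected form of the paper's argument rather than a departure from it.
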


\begin{proof}
Let $u(t)$ and $v(t)$ be two solutions of problem \eqref{1.1} corresponding to the initial data $u_\tau$ and $v_\tau$, respectively.
Denote $\tilde{u}(t)=u(t)-v(t)$ and one can check that $\tilde{u}(0, x)=0$ in $\Omega$ and  $\tilde{u}(t, x)=0$ in $(-h, 0) \times \Omega$.
It is easy to deduce that $\tilde{u}(t)$ satisfies the following equality for all $t>0$
\begin{align}\label{90}
\begin{aligned}
&
\|\tilde{u}(t)\|^2+\alpha^2\|\nabla \tilde{u}(t)\|^2+2 v \int_0^t\| \nabla\tilde{u}(s)\|^2 d s+2 \int_0^t \langle B (u(s))-B(v(s)), \tilde{u}(s)\rangle d s \\
& \quad =
2 \int_0^t \langle g(s, u_s)-g(s, v_s), \tilde{u}(s) \rangle d s. %\enskip \forall t>0.
\end{aligned}
\end{align}
Besides, by \eqref{2.2}, we can obtain that %for all $s \in[0, T]$,
\begin{align}\label{89}
\begin{aligned}
&
\|B(u(s))-B(v(s))\|_{V^{\prime}}  %\\
%= & \sup _{w \in V,\|w\|=1}|b(u^{(1)}(s)-u^{(2)}(s), u^{(1)}(s), w)-b(u^{(2)}(s), u^{(2)}(s)-u^{(1)}(s), w)| \\
\leq  C_1(\nabla \|u(s)\|+ \nabla \|v(s)\|)\|\nabla u(s)-\nabla v(s)\|.
\end{aligned}
\end{align}
Thus, if we fix an arbitrary $T>0$, and denote $R_T=C_1 \max _{s \in[0, T]} (\|\nabla u(s) \|+ \|\nabla v(s) \| )$, we have
$$
\|B(u(s) )-B (v(s) ) \|_{V^{\prime}}
\leq
R_T \|\nabla u(s)-\nabla v(s) \|, \enskip \forall s\in[0, T] .
$$
Then, by \text{(H3)}, we deduce that
\begin{align}\label{88}
\begin{aligned}
2 \int_0^t \langle g (s, u_s)-g (s, v_s ), \tilde{u}(s) \rangle d s
&\leq
2 \int_0^t \|g (s, u_s )-g (s, v_s) \|_{V^{\prime}}\|\nabla \tilde{u}(s)\| d s \\
& \leq
2 (\int_0^t \|g (s, u_s )-g (s, v_s ) \|_{V^\prime}^2 d s )^{1 / 2} (\int_0^t\|\nabla \tilde{u}(s)\|^2 d s )^{1 / 2} \\
&\leq
2C_g (\int_{-h}^t \|\nabla u(s)-\nabla v(s) \|^2 d s )^{1 / 2} (\int_0^t\|\nabla \tilde{u}(s)\|^2 d s )^{1 / 2} \\
&=
2 C_g \int_0^t\|\nabla \tilde{u}(s)\|^2 d s.
\end{aligned}
\end{align}
As $\|A \tilde{u}(s)\|_{V^{\prime}}=\|\tilde{u}(s)\|$, from \eqref{90}-\eqref{88}, we deduce that
\begin{align}
\|\nabla \tilde{u}(t)\|
\leq
\alpha^{-2}(v+C_1 \max _{s \in[0, T]} (\|\nabla u(s) \|+ \|\nabla v(s) \| )+2 C_g) \int_0^t\|\nabla \tilde{u}(s)\| d s.  %\quad \text { for all } t \in[0, T] .
\end{align}
\end{proof}

\begin{lem}\label{lem5.2}
Assume $f \in L_{l o c}^2(\mathbb{R} ; V^{\prime})$ satisfying \eqref{4.26}, $(u_\tau, \varphi) \in E_V^2$ and $g: \mathbb{R} \times C_V \rightarrow L^2(\Omega)$ satisfies \text{(H1)-(H4)}. Then the process $\{U(t, \tau)\}_{t \geq \tau}$ is $\tau$-continuous in space $E_V^2$.
\end{lem}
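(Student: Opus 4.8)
The plan is to verify the definition of $\tau$-continuity directly: for every fixed $t\in\mathbb{R}$ and every fixed datum $w=(u_\tau,\varphi)\in E_V^2$ the map $\tau\longmapsto U(t,\tau)w$ is continuous from $(-\infty,t]$ into $E_V^2$. So fix $t$ and $w$, take $\tau_n\to\tau_0\in(-\infty,t]$, and treat separately the subsequences with $\tau_n\le\tau_0$ and with $\tau_n\ge\tau_0$ (if $\tau_0=t$ only the first occurs). The skeleton is the cocycle identity $U(t,\tau)=U(t,s)U(s,\tau)$ combined with the fact, already available from Lemma~\ref{lem3.1} and quantified by Lemma~\ref{lem100}, that each map $U(t,s):E_V^2\to E_V^2$ is continuous. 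For $\tau_n\le\tau_0$ write $U(t,\tau_n)w=U(t,\tau_0)\bigl(U(\tau_0,\tau_n)w\bigr)$, so it is enough to show $U(\tau_0,\tau_n)w\to w$ in $E_V^2$ and then push that convergence through the continuous map $U(t,\tau_0)$. For $\tau_n\ge\tau_0$ write instead $U(t,\tau_0)w=U(t,\tau_n)\bigl(U(\tau_n,\tau_0)w\bigr)$, whence
\[
\|U(t,\tau_n)w-U(t,\tau_0)w\|_{E_V^2}=\bigl\|U(t,\tau_n)w-U(t,\tau_n)\bigl(U(\tau_n,\tau_0)w\bigr)\bigr\|_{E_V^2},
\]
which tends to $0$ once $U(\tau_n,\tau_0)w\to w$ and the Lipschitz constant of $U(t,\tau_n)$ is bounded uniformly for $\tau_n$ in a bounded neighbourhood of $\tau_0$. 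That last uniformity follows from Lemma~\ref{lem100} together with the a priori bounds of Lemma~\ref{lem4.1} and the boundedness of $\tau\mapsto U(t,\tau)w$ in Lemma~\ref{lem5.1}: the Gronwall exponent there only involves $\int_{\tau_n}^{t}\bigl(1+\|\nabla u(s)\|\bigr)\,ds$, which stays bounded since $\int_{\tau_n}^{t}\|\nabla u(s)\|^2\,ds$ is controlled uniformly for $\tau_n$ near $\tau_0$. In this way the whole statement reduces to one short-horizon claim: writing $u^{\sharp}$ for the solution issued with datum $w$ at the earlier of the two times, $s_n$ for the later time, and $\delta_n=|\tau_0-\tau_n|\to0$, one has $\bigl(u^{\sharp}(s_n),u^{\sharp}_{s_n}\bigr)\to(u_\tau,\varphi)$ in $E_V^2$.

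To prove this short-horizon claim I would treat the two components of $E_V^2$ separately. For the first component I would use the integral identity of Theorem~\ref{thm2.1}: $u^{\sharp}(s_n)+\alpha^2 Au^{\sharp}(s_n)$ differs from $u_\tau+\alpha^2 Au_\tau$ by the $V'$-valued integral, over an interval of length $\delta_n$, of $-\nu Au^{\sharp}-B(u^{\sharp})+f+g(\cdot,u^{\sharp}_{\cdot})$. By Lemma~\ref{lem4.1}, $\sup_s\|\nabla u^{\sharp}(s)\|$ and $\int\|\nabla u^{\sharp}(s)\|^2\,ds$ over this short interval are bounded uniformly in $n$; combining this with \eqref{2.6} for $B$, the local $L^2$-integrability of $f$, and (H2)--(H4) for $g$ (which gives $\int\|g(s,u^{\sharp}_s)\|^2\,ds\le C_g^2\int\|\nabla u^{\sharp}(s)\|^2\,ds$), the $V'$-norm of that integral tends to $0$ as $\delta_n\to0$. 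Since $I+\alpha^2 A$ is coercive, hence an isomorphism of $V$ onto $V'$, this yields $u^{\sharp}(s_n)\to u_\tau$ in $V$. For the delay segment $u^{\sharp}_{s_n}(\theta)=u^{\sharp}(s_n+\theta)$, $\theta\in[-h,0]$, I would take $\delta_n\le h$ and split $[-h,0]$ at $\theta=-\delta_n$: on $[-\delta_n,0]$ the argument $s_n+\theta$ lies in the genuine-solution range where $\|\nabla u^{\sharp}(s_n+\theta)\|$ is uniformly bounded, so this part of $\int_{-h}^{0}\|\nabla u^{\sharp}_{s_n}(\theta)-\nabla\varphi(\theta)\|^2\,d\theta$ is $O(\delta_n)+\int_{-\delta_n}^{0}\|\nabla\varphi\|^2\to0$; on $[-h,-\delta_n)$ the initial condition $u(\tau+\cdot)=\varphi$ forces $u^{\sharp}(s_n+\theta)=\varphi(\theta+\delta_n)$, so that part equals $\int\|\nabla\varphi(\cdot+\delta_n)-\nabla\varphi(\cdot)\|^2$, which vanishes by continuity of translations in $L^2(-h,0;V)$. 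This proves the short-horizon claim and, through the cocycle reduction above, the $\tau$-continuity of $\{U(t,\tau)\}_{t\ge\tau}$.

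The step I expect to be the main obstacle is the delay-segment component in $L_V^2$. The $V$-component is disposed of cleanly by the integral equation and the coercivity of $I+\alpha^2 A$, but the window $[s_n-h,s_n]$ of width $h$ carried by $u^{\sharp}_{s_n}$ straddles both the freshly computed solution piece and the prescribed history $\varphi$, so the convergence genuinely rests on the continuity of translations in $L^2(-h,0;V)$ and on careful bookkeeping of which part of the window is solution and which is history. A secondary technical point, as noted above, is to check that the Lipschitz constant in the continuous-dependence inequality of Lemma~\ref{lem100} can be taken uniform in $\tau_n$ on a bounded neighbourhood of $\tau_0$; this is exactly where the uniform a priori estimates of Lemma~\ref{lem4.1} (and Lemma~\ref{lem5.1}) enter, to keep the Gronwall exponent $\int_{\tau_n}^{t}\bigl(1+\|\nabla u(s)\|\bigr)\,ds$ bounded.
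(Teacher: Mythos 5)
Your proposal is correct and its skeleton coincides with the paper's: both proofs use the cocycle identity $U(t,r)=U(t,s)U(s,r)$ to reduce $\tau$-continuity to a ``short-horizon'' statement, namely that $U(s,r)(u_\tau,\varphi)\to(u_\tau,\varphi)$ in $E_V^2$ as $|s-r|\to 0$, and then push that convergence through $U(t,s)$ using the Lipschitz continuous-dependence estimate of Lemma~\ref{lem100} together with the a priori bounds of Lemmas~\ref{lem4.1} and~\ref{lem5.1}. Where you diverge is in how the short-horizon claim is actually verified, and there your argument is the more robust one. The paper bounds the delay-segment discrepancy by $h\cdot\sup_{\theta\in[-h,0]}\|\nabla u(s+\theta)-\nabla u(r+\theta)\|^2$ and then asserts smallness by citing \eqref{4.9}, Lemma~\ref{lem4.1} and Lemma~\ref{lem100}; but for $\theta$ in the part of the window that still lies in the prescribed history, $u(r+\theta)=\varphi(r+\theta-\tau)$ with $\varphi$ only in $L^2(-h,0;V)$, so a pointwise supremum over $\theta$ is not controlled and the paper's estimate is, as written, not justified. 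Your splitting of $[-h,0]$ at $\theta=-\delta_n$, treating the solution piece via the uniform $V$-bounds and the history piece via continuity of translations in $L^2(-h,0;V)$, is exactly what is needed to make this step rigorous; likewise your use of the integral identity of Theorem~\ref{thm2.1} and the coercivity of $I+\alpha^2A$ gives a clean, quantitative proof that $u(s)\to u(r)$ in $V$, where the paper simply invokes $u\in C([\tau,\infty);V)$. In short: same route, but your execution repairs the one genuinely delicate step that the paper glosses over.
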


\begin{proof}
Consider any $(u_\tau, \varphi) \in E_V^2$ and $t \in \mathbb{R}$. We shall prove that for any $\epsilon>0$, there exists some $\delta=\delta(\epsilon)>0$, such that if $r<t, s<t$ and $|r-s|<\delta$, then $\|U(t, r) (u_\tau, \varphi)-U(t, s) (u_\tau, \varphi)\|_{E_V^2}<\epsilon$. We assume that $r<s$ without loss of generality. Then employing Lemma \ref{lem4.1} and the property of the continuous process referred by Lemma \ref{lem3.1}, we have
\begin{align}\label{92}
\begin{aligned}
& \|U(t, r)(u_\tau, \varphi)-U(t, s) (u_\tau, \varphi)\|_{E_V^2}^2
=
\|U(t, s) U(s, r)(u_\tau, \varphi)-U(t, s) U(r, r)(u_\tau, \varphi)\|_{E_V^2}^2 \\
& \quad =
\|\nabla U(s, r)u_\tau- \nabla U(r, r)u_\tau\|^2
+\int_{-h}^0 \| U(s, r)\nabla \varphi(\theta)-U(r, r)\nabla \varphi(\theta)\|^2 \mathrm{d} \theta\\
%&=
%\|\nabla U(s, r)u_\tau- \nabla U(r, r)u_\tau\|^2
%+
%\int_{-h}^0 \| \nabla u_s(\theta)-\nabla_r u(\theta)\|^2 \mathrm{d} \theta \\
%&=
%\|\nabla U(s, r)u_\tau- \nabla U(r, r)u_\tau\|^2
%+
%\int_{-h}^0 \| \nabla u(s+\theta)-\nabla u(r+\theta)\|^2 \mathrm{d} \theta\\
& \quad \leq
\|\nabla U(s, r)u_\tau- \nabla U(r, r)u_\tau\|^2
+
h \cdot \sup \limits_{\theta \in [-h,0]}  \|\nabla u(s+\theta)-\nabla u(r+\theta)\|^2 \\
%&\leq
%\alpha^{-2}(\lambda^{-1}+\alpha^2+2C_gC_6^{\frac12})e^{-\sigma(t-\tau)}||(u_\tau,\varphi)||^2_{E_V^2}
%+
%\frac{1}{\beta \alpha^2}e^{-\sigma t} \int_{-\ \infty}^t e^{\sigma s}||f(s)||^2_{V^{\prime}}\mathrm{d} s.
%&\leq
%\alpha^{-2}(v+C_1 \max _{s \in[0, T]} (\|\nabla u(s) \|+ \|\nabla v(s) \| )+2 C_g) \int_0^t\|\nabla U(s, r)u_\tau- \nabla U(r, r)u_\tau\| d s
\end{aligned}
\end{align}
%Obviously, we can see that
%$$
%\|\nabla U(s, r)u_\tau- \nabla U(r, r)u_\tau\|^2
%+
%h \cdot \sup \limits_{\theta \in [-h,0]}  \|\nabla u(s+\theta)-\nabla u(r+\theta)\|^2
%$$
%is bounded by a constant depending only on $c_1, c_2, N, u_0, t$ and $g$, but being independent of $s$.
From statement \eqref{4.9}, Lemmas \ref{lem4.1} and \ref{lem100}, we directly conclude that the right hand side of inequality \eqref{92} is as small as needed if $|r-s|$ is small enough. Therefore, the proof is conpleted.
%Finally, using the similar derivations as relation (3.25), we obtain
%$$
%\frac{\mathrm{d}}{\mathrm{d} t}\|u(t)\|_V^2+\gamma\|u(t)\|_V^2 \leq c\left(c_2, \nu, N\right)\|u(t)\|^2+\frac{2\|g(t)\|^2}{\nu}, \quad \forall t>\tau .
%$$
%Integrating inequality (4.11) gives the boundedness of the $V$-valued function $\tau \longmapsto$ $U(t, \tau) u_0$ on $(-\infty, t]$. The proof is completed.
\end{proof}

At this stage, we take Theorem \ref{thm4.1}, Lemma 4.1, Lemma 4.4 and the definition of $\tau$- continuous (see \cite{LR14}) into account and give the main result of this article as follow.

\begin{thm}\label{thm5.1}
Assume $f \in L_{l o c}^2(\mathbb{R} ; V^{\prime})$ satisfying \eqref{4.26}, $(u_\tau, \varphi) \in E_V^2$ and $g: \mathbb{R} \times C_V \rightarrow L^2(\Omega)$ satisfies \text{(H1)-(H4)}.
Let $\{U(t, \tau)\}_{t \geq \tau}$ be the process associated to the solution operators of equation \eqref{91} and $\mathcal{A}_\sigma^{E^2_V}=\{A_\sigma^{E^2_V}(t): t \in \mathbb{R}\}$ be the pullback $\mathcal{D}_\sigma^{E_V^2}$-attractor obtained in Theorem \ref{thm4.1}. Fix a generalized Banach limit $\operatorname{LIM}_{T \rightarrow \infty}$ and let $\varrho: \mathbb{R} \mapsto E_V^2$ be a continuous map such that $\varrho(\cdot) \in \mathcal{D}_\sigma^{E_V^2}$. Then there exists a unique family of Borel probability measures $\{\mu_t\}_{t \in R}$ in space $E_V^2$ such that the support of the measure $\mu_t$ is contained in $\mathcal{A}_\sigma^{E^2_V}$ and
\begin{align*}
\operatorname{LIM}_{\tau \rightarrow-\infty} \frac{1}{t-\tau} \int_\tau^t \Phi(U(t, s) \varrho(s)) \mathrm{d} s
&=
\int_{A_{\sigma}^{E_V^2}(t)} \Phi(u) \mathrm{d} \mu_t(u)
=
\int_{E_V^2} \Psi(u) \mathrm{d}\mu_t(u) \\
&=
\operatorname{LIM}_{\tau \rightarrow-\infty} \frac{1}{t-\tau} \int_\tau^t \int_{E_V^2} \Psi(U(t, s) u) \mathrm{d} \mu_s(u) \mathrm{d} s,
\end{align*}
for any real-valued continuous functional $\Phi$ on $E_V^2$. In addition, $\mu_t$ is invariant in the sense that
\begin{align*}
\int_{A_{\sigma}^{E_V^2}(t)} \Phi(u) \mathrm{d} \mu_t(u)
=
\int_{A_{\sigma}^{E_V^2}(\tau)} \Phi(U(t, \tau) u) \mathrm{d} \mu_\tau(u), \enskip t \geq \tau .
\end{align*}
\end{thm}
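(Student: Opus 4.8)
The plan is to apply the abstract existence result of {\L}ukaszewicz and Robinson (\cite[Theorem 3.1]{LR14}) to the process $\{U(t,\tau)\}_{t\geq\tau}$ acting on the Hilbert space $E_V^2$. That theorem produces, for a fixed generalized Banach limit and a fixed continuous map $\varrho(\cdot)\in\mathcal{D}_\sigma^{E_V^2}$, a unique family $\{\mu_t\}_{t\in\mathbb{R}}$ of Borel probability measures carried by the pullback attractor and satisfying the two displayed identities, provided the following three hypotheses are verified: (a) the process possesses a pullback $\mathcal{D}_\sigma^{E_V^2}$-attractor $\mathcal{A}_\sigma^{E_V^2}$; (b) for each $(u_\tau,\varphi)\in E_V^2$ and each $t\in\mathbb{R}$, the map $\tau\longmapsto U(t,\tau)(u_\tau,\varphi)$ is bounded on $(-\infty,t]$; and (c) this same map is continuous in $\tau$ (the so-called $\tau$-continuity of the process). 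So the proof reduces to collecting these ingredients, all of which have been prepared in the preceding sections.

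First I would invoke Theorem~\ref{thm4.1} to supply hypothesis (a): under the standing assumptions on $f$ (in particular \eqref{4.26}) and on $g$ (conditions (H1)--(H4)), there exists a unique pullback $\mathcal{D}_\sigma^{E_V^2}$-attractor $\mathcal{A}_\sigma^{E_V^2}=\{A_\sigma^{E_V^2}(t):t\in\mathbb{R}\}$ in $E_V^2$. Next, hypothesis (b) is exactly the content of Lemma~\ref{lem5.1}: estimate \eqref{93} shows $\|U(t,\tau)(u_\tau,\varphi)\|_{E_V^2}^2$ is bounded by a quantity independent of $\tau\in(-\infty,t]$, using that $e^{\sigma\tau}\to 0$ and the finiteness \eqref{4.26} of the weighted integral of $\|f\|_{V'}^2$. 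Finally, hypothesis (c) is Lemma~\ref{lem5.2}: combining the absorbing-type estimate \eqref{4.11} with the Lipschitz-in-time bound from Lemma~\ref{lem100} and the cocycle/continuity property of the process from Lemma~\ref{lem3.1}, one shows via \eqref{92} that $\|U(t,r)(u_\tau,\varphi)-U(t,s)(u_\tau,\varphi)\|_{E_V^2}\to 0$ as $|r-s|\to0$ with $r,s<t$. With (a)--(c) in hand, \cite[Theorem 3.1]{LR14} directly yields the family $\{\mu_t\}$, the fact that $\mathrm{supp}\,\mu_t\subset A_\sigma^{E_V^2}(t)$, the chain of equalities relating the Banach-limit time averages to the integrals against $\Phi$ and $\Psi$, and the stated invariance $\int_{A_\sigma^{E_V^2}(t)}\Phi(u)\,\mathrm{d}\mu_t(u)=\int_{A_\sigma^{E_V^2}(\tau)}\Phi(U(t,\tau)u)\,\mathrm{d}\mu_\tau(u)$ for $t\geq\tau$; uniqueness follows from the uniqueness clause of that abstract theorem together with the fact that the attractor is the minimal pullback-attracting compact family.

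The main obstacle, and the only place where the specific structure of the 3D Navier--Stokes--Voigt system really enters, is verifying the $\tau$-continuity (hypothesis (c)). The subtlety is that varying the \emph{initial time} $\tau$ while keeping $(u_\tau,\varphi)$ fixed changes the solution on the whole delay window $[-h,0]$ of the segment $u_t$, so one must control not just $\|\nabla u(s)-\nabla u(r)\|$ at a single point but the supremum of $\|\nabla u(s+\theta)-\nabla u(r+\theta)\|$ over $\theta\in[-h,0]$, as in the last line of \eqref{92}. This is handled by the uniform-in-time bounds of Lemma~\ref{lem4.1} (which give a deterministic bound $R_T$ on $\max_{s\in[0,T]}(\|\nabla u(s)\|+\|\nabla v(s)\|)$ needed to absorb the trilinear term $b$ via \eqref{2.2}) together with the Grönwall-type argument of Lemma~\ref{lem100}, which converts the integral inequality into an estimate proportional to $|r-s|$; the Voigt regularization $\alpha^2$ is what makes $\|A\tilde u\|_{V'}=\|\tilde u\|$ and hence keeps all these estimates at the $V$-level without needing higher regularity. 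Once this is granted, the remainder is a mechanical application of the abstract machinery.
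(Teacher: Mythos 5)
Your proposal matches the paper's proof: both reduce the theorem to the three hypotheses of \cite[Theorem 3.1]{LR14} — existence of the pullback attractor (Theorem \ref{thm4.1}), boundedness of $\tau\mapsto U(t,\tau)(u_\tau,\varphi)$ on $(-\infty,t]$ (Lemma \ref{lem5.1}), and $\tau$-continuity (Lemma \ref{lem5.2}) — and then invoke the abstract result. Your additional commentary on where the delay term makes the $\tau$-continuity delicate is consistent with how the paper handles it via Lemmas \ref{lem100} and \ref{lem5.2}.
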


\begin{proof}
From Theorem \ref{thm2.1} and Lemma \ref{lem4.1}, the solution operators of the problem \eqref{1.1} generate a continuous process $\{U(t, \tau)\}_{t \geq \tau}$ on the space $E_V^2$. Theorem \ref{thm4.1} shows that $\{U(t, \tau)\}_{t \geq \tau}$ possesses a pullback attractor in $E_V^2$. %Lemmas \ref{lem5.1} and \ref{lem5.2} indicate that for each given $t \in \mathbb{R}$ and $(u_\tau, \varphi) \in E_V^2$ the $E_V^2$-valued function $(\tau, \varphi) \mapsto U(t, \tau) (\tau, \varphi)$ is bounded and continuous on $(-\infty, t]$.
Combining the above facts, Lemmas \ref{lem5.1} and \ref{lem5.2} and the abstract result of \cite[Theorem 3.1]{LR14}, we obtain Theorem \ref{thm5.1}.
\end{proof}

\hskip 3mm

\noindent\textbf{Acknowledgement}  \enskip This work was supported by the NNSF of China with contract No.12171082, the TianYuan Special Funds of the National Natural Science Foundation of China (Grant No. 12226403), the fundamental research funds for the central universities with contract numbers 2232022G-13, 2232023G-13 and by a grant from Science and Technology Commission of Shanghai Municipality.

\baselineskip 11pt


\begin{thebibliography}{9}

\bibitem{A75} R. A. Adams, Sobolev Spaces, Academic Press, New York, 1975.

\bibitem{AT18} C. T. Anh and D. T. P. Thanh, Existence and long-time behavior of solutions to Navier-Stokes-Voigt equations with infinite delay, \textit{Bull. Korean Math. Soc.}, 55(2018), 379-403.


\bibitem{B04} J.M. Ball, Global attractors for damped semilinear wave equations,
    \textit{Discrete Contin. Dyn. Syst.-A}, 10(2004), 31-52.




\bibitem{BB12}  L. C. Berselli and L. Bisconti, On the structural stability of the Euler-Voigt and Navier-Stokes-Voigt models,
    \textit{Nonlinear Anal.-Theory Methods Appl.}, 75(2012),117-130.

\bibitem{BKR16} L. C. Berselli, T. Y. Kim and L. G. Rebholz, Analysis of a reduced-order approximate deconvolution model and its interpretation as a Navier-Stokes-Voigt regularization,
    \textit{Discrete Contin. Dyn. Syst.-B}, 21(2016),1027-1050.


\bibitem{BMR16} A. Bronzi, C. Mondaini, R. Rosa,
Abstract framework for the theory of statistical solutions,
\textit{J. Differential Equations}, 260(2016), 8428-8484.



\bibitem{BS17} L. C. Berselli and S. Spirito, Suitable weak solutions to the 3D Navier-Stokes equations are constructed with the Voigt approximation,
    \textit{J. Differ. Equ.}, 262 (2017), 3285-3316.



\bibitem{CG12} M. Chekroun, N. Glatt-Holtz,
Invariant measures for dissipative dynamical systems: abstract results and applications,
\textit{Comm. Math. Phys.}, 316(2012), 723-761.

\bibitem{CLT06} Y. P. Cao, E. M. Lunasin and E. S. Titi, Global well-posedness of the three dimensional viscous and inviscid simplified Bardina turbulence models,
    \textit{Commun. Math. Sci.}, 4(2006), 823-848.

\bibitem{CLR06} T. Caraballo, G. {\L}ukaszewicz and J. Real, Pullback attractors for asymptotically compact non-autonomous dynamical systems,
    \textit{Nonlinear Anal.-Theory Methods Appl.}, 64(2006), 484-498.


\bibitem{2CLR06} T. Caraballo, G. {\L}ukaszewicz and J. Real, Pullback attractors for non-autonomous 2D-Navier-Stokes equations in some unbounded domains,
    \textit{C. R. Math. Acad. Sci. Paris}, 342(2006), 263-268.

\bibitem{CMD20} T. Caraballo and A. M. M\'{a}rquez-Dur\'{a}n, Long time dynamics for functional three-dimensional Navier-Stokes-Voigt equations, \textit{AIMS Mathematics}, 5(2020), 5470-5494.

\bibitem{CR01} T. Caraballo and J. Real, Navier-Stokes equations with delays,
    %\textit{Proceedings of The Royal Society A.},
    \textit{Proc. R. Soc. A-Math. Phys. Eng. Sci.}, 457(2001), 2441-2453.

\bibitem{CP09} M. Conti and V. Pata, On the regularity of global attractors,
     \textit{Discrete Contin. Dyn. Syst.-A}, 25(2009), 1209-1217.

\bibitem{N45} T. N. Duong, Existence and long-time behavior of solutions to the velocity-vorticity-Voigt model of the 3D Navier-Stokes equations with damping and memory,
    \textit{Math. Meth. Appl. Sci.}, 45(2022), 11635-11658.

\bibitem{EHL13} M. A. Ebrahimi, M. Holst and E. Lunasin, The Navier-Stokes-Voight model for image inpainting,
    \textit{IMA J. Appl. Math.}, 78 (2013), 869-894.

\bibitem{E98} L. C. Evans, Partial Differential Equations, Graduate Studies in Mathematics, 19. American Mathematical Society, Providence, RI, 1998.

\bibitem{FMRT01}  C. Foias, O. Manley, R. Rosa, and R. Temam, Navier-Stokes equations and turbulence, Cambridge University Press, Cambridge, 2001.

\bibitem{FP76} C. Foias, G. Prodi,
Sur les solutions statistiques des \'equations de Naiver-Stokes,
\textit{Ann. Mat. Pura Appl.}, {\bf 111}(1976), 307-330.

\bibitem{GMR12} J. Gar\'{c}ia-Luengo, P. Ma\'{r}in-Rubio and J. Real, Pullback attractors in V for non-autonomous 2D-Navier-Stokes equations and their tempered behaviour,
    \textit{J. Differ. Equ.}, 252(2012), 4333-4356.

\bibitem{2GMR12} J. Gar\'{c}ia-Luengo, P. Ma\'{r}in-Rubio and J. Real, Pullback attractors for three-dimensional non-autonomous Navier-Stokes-voigt equations,
    \textit{Nonlinearity}, 25(2012), 905-930.

\bibitem{GM20} J. Garc\'{i}a-Luengo and P. Mar\'{i}n-Rubio, Pullback attractors for 2D Navier-Stokes equations with delays and the flattening property,
    \textit{Commun. Pure Appl. Anal.}, 19(2020), 2127-2146. %DOI:10.3934/cpaa.2020094.

%\bibitem{DQ22} X. Dong, Y. Qin, Strong pullback attractors for a nonclassical diffusion equation,
%    \textit{Discrete Cont. Dyn. Syst.-B}, 27(2022), 6217-6231.

%\bibitem{E98} L.C. Evans, Partial Differential Equations, American Mathematical Society, Providence, 1998.

%\bibitem{WLZ06} S. Wang, D. Li and C. Zhong, On the dynamics of a class of nonclassical parabolic equations,
%    \textit{J. Math. Anal. Appl.}, 317 (2006), 565-582.


%\bibitem{CF94} H. Crauel, and F. Flandoli, Attractors for random dynamical systems,
%    \textit{Probability Theory Related Fields}, 100(1994), 365-393.


%\bibitem{LZ07} Y. Li and C. Zhong, Pullback attractors for the norm-to-weak continuous process and application to the nonautonomous reaction-diffusion equations,
%    \textit{Appl. Math. Comput.}, 190(2007), 1020¨C1029.

%\bibitem{SW07} H. Song, and H. Wu, Pullback attractors of nonautonomous reaction¨Cdiffusion equations.
%\textit{Journal of Mathematical Analysis and Applications}, 325(2007), 1200-1215



%\bibitem{ZYS06}  C. Zhong, M. Yang, and C. Sun, The existence of global attractors for the norm-to-weak continuous semigroup and application to the nonlinear reaction¨Cdiffusion equations. \textit{Journal of Differential Equations.}, 223(2006), 367-399.

%\bibitem{H88} A. Haraux, Attractors of asymptotically compact processes and applications to nonlinear partial differential equations,
%    \textit{Commun. Partial Differ. Equ.}, 13(1988), 1383-1414.

\bibitem{HY16} H. Harraga and M. Yebdri, Pullback attractors for a class of semilinear nonclassical diffusion equations with delay,
    \textit{J. Differ. Equ.}, 7(2016), 1-33.

\bibitem{KLR12} P. Kuberry, A. Larios and L. G. Rebholz, et al., Numerical approximation of the Voigt regularization for incompressible Navier-Stokes and magnetohydrodynamic flows,
    \textit{Comput. Math. Appl.}, 64(2012), 2647-2662.
%\bibitem{H98}  J.K. Hale, 1988, Asymptotic Behaviour of Dissipative Systems (Providence, RI: American
%Mathematical Society).

%\bibitem{SY02} G.R. Sell, and  Y. You, 2002, Dynamics of Evolutionary Equations (New York: Springer).


\bibitem{L08} G. {\L}ukaszewicz, Pullback attractors and statistical solutions for 2-d Navier-Stokes equations,
    \textit{Discrete Cont. Dyn. Syst.-B}, 9(2008), 643-659.


\bibitem{LR13} W. J. Layton and L. G. Rebholz, On relaxation times in the Navier-Stokes-Voigt model, \textit{Int. J. Comput. Fluid Dyn.}, 27 (2013), 184-187.

\bibitem{LR14} G. {\L}ukaszewicz and J.M. Robinson, Invariant measures for non-autonomous dissipative dynamical systems, \textit{Discret. Contin. Dyn. Syst.-A}, 34(2014), 4211-4222.



\bibitem{LS17} X. Li, W. Shen, C. Sun, Invariant measures for complex-valued dissipative dynamical systems and applications,
    \textit{Discret. Contin. Dyn. Syst.-B}, 22(2017), 2427-2446.




%\bibitem{LZ07} Y. Li and C. Zhong, Pullback attractors for the norm-to-weak continuous process and application to the nonautonomous reaction-diffusion equations,
%    \textit{Appl. Math. Comput.}, 190 (2007), 1020-1029.

\bibitem{MWZ02} Q. Ma, S. Wang and C. Zhong, Necessary and sufficient conditions for the existence of global attractors for semigroups and applications,
    \textit{Indiana Univ. Math. J.}, 51 (2002), 1541-1559.



%\bibitem{MMR11} P. Mar\'{i}n-Rubio, A.M. M\'{a}rquez-Dur\'{a}n, J. Real,
%    Pullback attractors for globally modified Navier-Stokes equations with infinite delays, \textit{Discrete Cont. Dyn. Syst.-A}, 31(2011), 779-796.






\bibitem{MR09} P. Ma\'{r}in-Rubio and J. Real,
On the relation between two different concepts of pullback attractors for non-autonomous dynamical systems,
\textit{Nonlinear Anal.-Theory Methods Appl.}, 71(2009), 3956-3963.

\bibitem{N16} C. J. Niche, Decay characterization of solutions to Navier-Stokes-Voigt equations in terms of the initial datum,
    \textit{J. Differ. Equ.}, 260(2016), 4440-4453.

%\bibitem{O73} A. P. Oskolkov, The uniqueness and solvability in the large of boundary value problems for the equations of motion of aqueous solutions of polymers,
    %(Russian) Boundary value problems of mathematical physics and related questions in the theory of functions,
    %\textit{J. Sov. Math.}, 8(1977), 427-455.
    %7. Zap. Nau\v{c}n. Sem. Leningrad. Otdel. Mat. Inst. Steklov. (LOMI) 38(1973), 98-116.

\bibitem{PSZ18} X. Peng, Y. Shang and X. Zheng, Pullback attractors of nonautonomous nonclassical diffusion equations with nonlocal diffusion,
    \textit{Z. Angew. Math. Phys.}, 69(2018), 1-14.

\bibitem{R09} R. Rosa,
Theory and applications of statistical solutions of the Navier-Stokes equations,
in \textit{Partial Differential Equations and Fluid Mechanics},
ed. by J. C. Robinson, J. L. Rodrigo,
London Mathematical Society Lecture Note Series, Vol. 364, (Cambridge University Press, Cambridge, 2009), 228-257.



\bibitem{R11} J. C. Robinson, Infinite-dimensional dynamical systems, Cambridge University Press, England, 2011.

\bibitem {SY23} K. Su and R. Yang, Pullback dynamics and robustness for the 3D Navier-Stokes-Voigt equations with memory,
    \textit{Electron. Res. Arch.}, 31(2023), 928-946.

\bibitem{SCD07} C. Sun, D. Cao and J. Duan, Uniform attractors for nonautonomous wave equations with nonlinear damping,
    \textit{SIAM J. Appl. Dyn. Syst.}, 6(2007), 293-318. %DOI:10.1137/060663805.

\bibitem{T79} R. Temam, Navier-stokes equations, theory and numerical analysis, North Holland, Amsterdam, 1979.


\bibitem{T21} V. M. Toi, Stability and stabilization for the three-dimensional Navier-Stokes-Voigt equations with unbounded variable delay,
    \textit{Evol. Equ. Control Theory}, 10(2021), 1007-1023.
%\bibitem{WLQ17} Y. Wang, P. Li and Y. Qin, Upper semicontinuity of uniform attractors for nonclassical diffusion equations,
%    \textit{Bound. Value Probl.}, 2017 (2017), Paper No. 84, 11 pp.


\bibitem{VF78} M. Vishik, A. Fursikov,
Translationally homogeneous statistical solutions and individual solutions
with infinite energy of a system of Navier-Stokes equations,
\textit{Siberian Math. J.}, 19(1978), 710-729.

\bibitem{W09} X. Wang, Upper-semicontinuity of stationary statistical properties of dissipative systems,
    \textit{Discrete Cont. Dyn. Syst.-A}, 23(2009), 521-540.


%\bibitem{WQ10} Y. Wang and Y. Qin, Upper semicontinuity of pullback attractors for nonclassical diffusion equations,
%    \textit{J. Math. Phys.}, 51(2010): 022701.


\bibitem{WZ08} Y. Wang and C. Zhong, On the existence of pullback attractors for non-autonomous reaction-diffusion equations, \textit{Dynam. Syst.}, 23(2008), 1-16.

\bibitem{WZT20} J. Wang, C. Zhao, Caraballo Tom\'{a}s, Invariant measures for the 3D globally modified Navier-Stokes equations with unbounded variable delays, \textit{Commun. Nonlinear Sci. Numer. Simul.}, 2020, 91: 105459.



%\bibitem{WZL18} Y. Wang, Z. Zhu and P. Li, Regularity of pullback attractors for nonautonomous nonclassical diffusion equations,
%    \textit{J. Math. Anal. Appl.}, 459 (2018), 16-31.


%\bibitem{ABF13} S. M. Afonso, E. M. Bonotto, M. Federson,
%Stability of functional differential equations with variable impulsive perturbations via generalized ordinary differential equations,
%\textit{Bulletin Sci. Math.}, \textbf{137}(2013), 189-214.


%\bibitem{WZ08} Y. Wang and C. Zhong, On the existence of pullback attractors for nonautonomous reaction
%diffusion equations,
%\textit{Dyn. Syst.}, 23 (2008), 1-16.



%\bibitem{ZYS06} C. Zhong, M. Yang, C. Sun, The existence of global attractors for the norm-to-weak continuous semigroup and application to the nonlinear reaction¨Cdiffusion equations.
%\textit{J. Differential Equations}, 223(2006), 367-399.


%\bibitem{R01} J. C. Robinson, Infinite Dimentional Dynamical System, Cambridge University Press, 2001.

%\bibitem{AB10} C. T. Anh and T. Q. Bao, Pullback attractors for a class of non-autonomous nonclassical diffusion equations,
%    \textit{Nonlinear Anal.}, 73 (2010), 399-412.



%\bibitem{WLZ06}  S. Wang, D. Li, C. Zhong, On the dynamics of a class of nonclassical parabolic equations, \textit{Journal of Mathematical Analysis and Applications}, 317(2006), 565-582.



%\bibitem{WH20} G. Wang, C. Hu, Continuous Dependence on a Parameter of Exponential Attractors for Nonclassical Diffusion Equations,
%    \textit{Discrete Dynamics in Nature and Society}, 2020: 1025457.

%\bibitem{QCJ23} Y. Qin, J. Chen, H. Jiang, A new existence theorems for global attractors and its applications, Under Review, 2023.


%\bibitem{QYL12} Y. Qin, X. Yang, X. Liu, Averaging of 3D Navier-Stokes-Voigt equations with singularly oscillating force.
%    \textit{Nonlinear Anal., RWA}, 13(2012), 893¨C904.
















%\bibitem{ABJ15} S. M. Afonso, E. M. Bonotto,  M. Z. Jimenez,
%Negative trajectories in impulsive semidynamical systems,
%\textit{J. Differential Equations}, \textbf{259}(2015), 964-988.

%\bibitem{A03} N. U. Ahmed,
%Existence of optimal controls for a general class
%of impulsive systems on Banach spaces,
%\textit{SIAM J. Control Optim.}, \textbf{42}(2003), 669-685.

%\bibitem{BS93} D. D. Bainov, P. S. Simeonov,
%``Impulsive Differential Equations: Periodic solutions and applications",
%John Wiley, New York, 1993.




%\bibitem{AK06} C. Aliprentis, K. Border,
 %Infinite Dimensional Analysis, A Hithhiker's Guide,
%third editon, Springer-Verlag, 2006.
%\bibitem{BF13} F. Boyer, P. Fabrie,
%Mathematical Tools for the Study of the Incompressibel Navier-Stokes Equations and Related Models,
%Springer, New York, 2013

%\bibitem{BLW01} P. W. Bates, K. Lu, B. Wang,
%Attractors for lattice dynamical systems,
%\textit{Inter. J. Bifur. Chaos}, \textbf{11}(2001), 143-153.

%\bibitem{BP03} W.-J. Beyn, S. Yu. Pilyugin,
%Attractors of reaction diffusion systems on infinite lattices,
%\textit{J. Dyn. Differential Equations}, \textbf{15}(2003), 485-515.

%\bibitem{BF08} E. M. Bonotto, M. Federson,
%Limit sets and the Poincar\'e-Bendixson Theorem
%in impulsive semidynamical systems,
%\textit{J. Differential Equations}, \textbf{244}(2008), 2334-2349.

%\bibitem{BD13} E. M. Bonotto, D. P. Demuner,
%Attractors of impulsive dissipative semidynamical systems
%\textit{Bulletin Sci. Math.}, \textbf{137}(2013), 617-642.

%\bibitem{BGS15} E. M. Bonotto, L. P. Gimenes, G. M Souto,
%On Jack Hale's problem for impulsive systems,
%\textit{J. Differential Equations}, \textbf{259}(2015), 642-665.

%\bibitem{BBC15} E. M. Bonotto, M. C. Bortolan, A. N. Carvalho,
%Global attractors for impulsive dynamical systems-a precompact approach,
%\textit{J. Differential Equations}, \textbf{259}(2015), 2602-2625.

%\bibitem{BBC16} E. M. Bonotto, M. C.  Bortolan, R. Collegari,
%Semicontinuity of attractors for impulsive dynamical systems,
%\textit{J. Differential Equations}, \textbf{261}(2016), 4338-4367.

%\bibitem{BBC17} E. M. Bonotto, M. C.  Bortolan, T. Caraballo,
%Attractors for impulsive non-autonomous dynamical systems and their relations
%\textit{J. Differential Equations}, \textbf{262}(2017), 3524-3550.

%\bibitem{BBC171} E. M. Bonotto, M. C.  Bortolan, T. Caraballo,
%Impulsive non-autonomous dynamical systems and impulsive cocycle attractors
%\textit{Math. Meth. Appl. Sci.}, \textbf{40}(2017), 1095-1113.

%\bibitem{BBS18} E. M. Bonotto, J. G. Mesquita, R. P. Silva,
%Global mild solutions for a nonautonomous 2D Navier-Stokes
%equations with impulses at variable times
%\textit{J. Math. Fluid Mech.}, \textbf{20}(2018), 801-818.


%\bibitem{BDJ19} E. M. Bonotto, D. P. Demuner, M. Z. Jimenez,
%Convergence for non-autonomous semidynamical systems with impulses,
%\textit{J. Differential Equations}, \textbf{266}(2019), 227-256.

%\bibitem{BD20} E. M. Bonotto, D. P. Demuner,
%Stability and forward attractors for non-autonomous
%impulsive semidynamical systems,
%\textit{Comm. Pure Appl. Anal.}, \textbf{19}(2020), 1979-1996.

%\bibitem{BDL11} B. Bouchard, N. M. Dang, C. A. Lehalle,
%Optimal control of trading algorithms: A general impulse
%control approach,
%\textit{SIAM J. Financial Math.}, \textbf{2}(2011), 404-438.








%\bibitem{babin1985} A. V. Babin and M. I. Vishik, Attractors of Navier-Stokes systems and of parabolic equations, and estimates for their dimensions,  J. Soviet Math, 28(1985), 619-627.


%\bibitem{CPT13} M. Conti, V. Pata, R. Temam, Attractors for processes on time-dependent spaces. Applications to wave equations, J. Differ. Equ., 255(2013): 1254-1277.

%\bibitem{G88} J. M. Ghidaglia, Finite dimensional behavior for weakly damped driven Schrodinger equations, Ann. Inst. Henri.
%Poincare., 5(1988), 365-405.

%\bibitem{hale1988} J. K. Hale, Asymptotic behavior of disssipative systems, Mathemathical Surveys and Monographs, Number 25,
%American Mathematical Society, Providence  RI, 1998.

%\bibitem{haraux1988} A. Haraux, Attractors of asymptotically compact process and applications to nonlinear partial differential
%equations, Commu. Partial Differential Equations, 13(1988), 1383-1414.

%\bibitem{HW12} Z. Hu, Y. Wang,  Pullback attractors for a nonautonomous nonclassical diffusion equation with variable delay. Journal of Mathematical Physics, 53(2012), 072702.

%\bibitem{ma2002} Q. Ma, S. Wang and C. Zhong, Necessary and sufficient conditions for the existence of global attractors for
%semigroups and applications, Indiana Univ. Math. J., 51(2002), 1541-1559.


%\bibitem{moise1998} I. Moise, R. Rosa and X. Wang, Attractors for non-compact semigroups via energy equations, Nonlinearity, 11(1998),
%1369-1393.

%\bibitem{pata2007}  V. Pata and S. Zelik, A result of the existence of global attractors for semigroups of closed operators, Comm.
%Pure Appl. Anal., 6(2)(2007), 481-486.

%\bibitem{pazy1983} A. Pazy, Semigroups of linear operators and applications to partial differential equations, Springer-Verlag, New York, Berlin, Heidelberg, Tokyo, 1983.



%\bibitem{qin}Y. Qin, Nonlinear parabolic-hyperbolic coupled systems and their attractors, Operator theory, advances and applications, Vol. 184. birkhauser, Basel-Boston-Berlin, 2008.

%\bibitem{qin2002} Y. Qin and J. E. M. Rivera, Universal attractors for a nonlinear one-dimensional heat-conductive viscous real
%gas, Proc. Roy. Soc. Edinburgh A, 132(2002), 685-709.

%\bibitem{qin2021} Y. Qin and K. Su, Attractors for Nonlinear Autonomous Dynamical Systems, Sciences Press, 978-2-7598-2702-2.c901, 2022. %¸ñʽÊÇ·ñÓÐÎÊÌâ

%\bibitem{raugel} G. Raugel, Continuity of attractors, Math. Mod. Numer. Anal., 23(1989), 519-533.

%\bibitem{robinson2001} J. C. Robinson, Infinite-dimensional Dynamical Systems an Introduction to Dissipative Parabolic PDEs
%and the Theory of Global Attractors, Cambridge University Press, Cambridge, 2001.


%\bibitem{segal1963}I. Segal, Nonlinear semigroups, Ann. Math, 78(1963), 399.


%\bibitem{sell2002} G. R. Sell, Y. You, Dynamics of evolutionary equations, Springer, New York, 2002.

%\bibitem{sun2007} C. Sun, S. Wang and C. Zhong, Global attractors for a nonclassical diffusion equation, Acta Math. Sin., Engl
%Ser., 23(2007), 1271-1280.

%\bibitem{temam1997} R. Temam, Infinite Dimensional Dynamical Systems in Mechanics and Physics, Vol
%68. Appl. Math. Sci, Springer-Verlag, New York, second edition, 1997.

%\bibitem{WZ08} Y. Wang, and C. Zhong, On the existence of pullback attractors for non-autonomous reaction¨Cdiffusion equations, Dynamical Systems, 23(2008), 1-16.

%\bibitem{WZ09} Y. Wang, and S. Zhou, Kernel sections of multi-valued processes with application to the nonlinear reaction-diffusion equations in unbounded domains, Q. Appl. Math., 67(2009), 343-378.

\bibitem{WZL18} Y. Wang, Z. Zhu and P. Li, Regularity of pullback attractors for nonautonomous nonclassical diffusion equations,
     \textit{J. Math. Anal. Appl.}, 459(2018), 16-31.


%\bibitem{xiao2002} Y. L. Xiao, Attractors for a nonclassical diffusion equation, Actc Mathematicae Applicatae Sinca, English
%Series, 18(1), 273-276 (2002)


%\bibitem{zheng200101} S. Zheng and Y. Qin, Maximal attractor for the system of one-dimensional polytropic viscous ideal gas, Quart.Appl. Math., 59(2001), 579-599.

\bibitem{YW20} G. Yue and J. Wang, Attractors of the velocity-vorticity-Voigt model of the 3D Navier-Stokes equations with damping,
    \textit{Comput. Math. Appl.}, 80(2020), 434-452.

\bibitem{ZY17} C. Zhao, L. Yang, Pullback attractors and invariant measures for the non-autonomous globally modified Navier-Stokes equations,
    \textit{Commun. Math. Sci.}, 15(2017), 1565-1580.

\bibitem{ZZ18} Z. Zhu, C. Zhao, Pullback attractor and invariant measures for the three-dimensional regularized MHD equations, \textit{Discrete Cont. Dyn. Syst.-A}, 38(2018), 1461-1477.




\bibitem{ZS15} K. Zhu and C. Sun, Pullback attractors for nonclassical diffusion equations with delays,
 \textit{J. Math. Phy.}, 56(2015), 1-20.


%\bibitem{ZXZ20} K. Zhu, Y. Xie, F. Zhou, Attractors for the nonclassical reaction-diffusion equations on time-dependent spaces, Bound. Value Probl., 95(2020): 1-14.

%\bibitem{zhong2006} C. Zhong, M. Yang and C. Sun, The existence of global attractors for the norm-to-weak continuous semigroup and application to the nonlinear reaction-diffusion equations, J. Differential Equations, 223(2006), 367-399.

%\bibitem{CF94}  H. Crauel, F. Flandoli, Attractors for random dynamical systems,
%    \textit{Probability Theory Related Fields}, 100(1994), 365-393.


\end{thebibliography}
\end{document}